%% file: main.tex
\documentclass[reqno]{amsart}[standalone]
\usepackage[T1]{fontenc}
\usepackage[ruled,linesnumbered]{algorithm2e}
\usepackage{amsfonts}
\usepackage{amsmath}
\usepackage{amssymb}
\usepackage{amsthm}
\usepackage[toc,page]{appendix}
\usepackage{bm}
\usepackage{booktabs}
\usepackage{pifont}
\usepackage{url}
\usepackage{comment}
\usepackage{multirow, multicol}
\usepackage{glossaries}
\usepackage{graphicx}
\usepackage{mathtools}
\usepackage[inline]{enumitem}
\usepackage{cases}
\usepackage{hyperref}
\usepackage{rotating}
\usepackage{subcaption}
\usepackage[table]{xcolor}
\usepackage{float}
\usepackage{siunitx}
\usepackage[top=1in, bottom=1.25in, left=1.25in, right=1.25in]{geometry}

\usepackage{tikz,xcolor}

\usetikzlibrary{patterns, arrows.meta, positioning}
\definecolor{mathlabblue}{HTML}{065895}
\definecolor{mathlaborange}{HTML}{F79A25}

\glsdisablehyper

\definecolor{lime}{HTML}{A6CE39}
\DeclareRobustCommand{\orcidicon}{%
	\begin{tikzpicture}
	\draw[lime, fill=lime] (0,0) 
	circle [radius=0.16] 
	node[white] {{\fontfamily{qag}\selectfont \tiny ID}};
	\draw[white, fill=white] (-0.075,0.095) 
	circle [radius=0.007];
	\end{tikzpicture}
	\hspace{-2mm}
}
\foreach \x in {A, ..., Z}{%
	\expandafter\xdef\csname orcid\x\endcsname{\noexpand\href{https://orcid.org/\csname orcidauthor\x\endcsname}{\noexpand\orcidicon}}
}

\usepackage{todonotes} % remove later
\usepackage{mathrsfs}
\usepackage[foot]{amsaddr}
\usepackage{caption}
\usepackage{mathbbol}
\usepackage[giveninits=true, doi=false, url=false, eprint=false]{biblatex}
\AtEveryBibitem{
    \clearfield{urlyear}
    \clearfield{urlmonth}
    \clearfield{month}
    \clearfield{issn}
    \clearfield{language}
    \clearfield{isbn}
     \iffieldundef{doi}
    {}
    {\clearfield{url}}%
}

\newbibmacro{title+link}[1]{%
  \iffieldundef{doi}
    {\iffieldundef{url}
      {\iffieldequalstr{eprinttype}{arxiv}
        {\href{https://arxiv.org/abs/\thefield{eprint}}{#1}}
        {#1}}
      {\href{\thefield{url}}{#1}}}
    {\href{https://doi.org/\thefield{doi}}{#1}}}

\DeclareFieldFormat{title}{\usebibmacro{title+link}{\mkbibemph{#1}}}
\DeclareFieldFormat[article,inbook,incollection,inproceedings,patent,thesis,unpublished]{title}%
  {\usebibmacro{title+link}{\mkbibquote{#1\isdot}}}
  
\graphicspath{{images/}}

\theoremstyle{definition}
\newtheorem{definition}{Definition}[section]

\theoremstyle{remark}
\newtheorem{remark}[definition]{Remark}

\theoremstyle{plain}
\newtheorem{theorem}[definition]{Theorem}
\newtheorem{proposition}[definition]{Proposition}
\newtheorem{lemma}[definition]{Lemma}
\newtheorem{corollary}[definition]{Corollary}

\newcommand{\R}{\mathbb{R}}
\newcommand{\mP}{\mathbb{P}}
\newcommand{\mA}{\mathbb{A}}

\newcommand{\ek}{^{(k)}}
\newcommand{\ekmo}{^{(k-1)}} %elevated to the K Minus One$
\newcommand{\ua}{_{\bm a}} %Underscore A
\newcommand{\uh}{_{\bm h}}
\newcommand{\lk}{\bm{l}^{(k)}}

\newcommand{\fk}{\bm{\phi}^{(k)}}

\newcommand{\bk}{\bm{b}^{(k)}}
\newcommand{\hk}{\bm{h}^{(k)}}
\newcommand{\hkmo}{\bm{h}^{(k-1)}}
\newcommand{\sk}{\sigma^{(k)}}
\newcommand{\wk}{\bm{W}^{(k)}}
\newcommand{\wka}{\bm{W}^{(k)}_{\bm a}}
\newcommand{\wkh}{\bm{W}^{(k)}_{\bm h}}
\newcommand{\NN}[1]{\mathcal{NN}_\mathrm{#1}}
\newcommand{\decnh}{\mathcal{D}_{\bm{\theta}}^{N_h}}
\newcommand{\enh}{\bm{\Phi}^{N_h}}
\newcommand{\ethnh}{\mathcal{E}^{N_h}}

\newcommand{\lipdec}{L_{\mathcal{NN}_\text{dec}}}
\newcommand{\deltas}{\Delta\bm{s}}

\newacronym{ae}{AE}{Autoencoder}
\newacronym{ano}{ANO}{Averaging Neural Operator}
\newacronym{cfd}{CFD}{Computational Fluid Dynamics}
\newacronym{cnef}{CNeF}{Conditional Neural Field}
\newacronym{cnn}{CNN}{Convolutional Neural Network}
\newacronym{deeponet}{DeepONet}{Deep Operator Network}
\newacronym{dof}{DoF}{Degree of Freedom}
\newacronym{elu}{ELU}{Exponential Linear Unit}
\newacronym{fe}{FE}{Finite Element}
\newacronym{film}{FiLM}{Feature-wise Linear Modulation}
\newacronym{filmldm}{FiLM-LDM}{FiLM Latent Dynamics Model}
\newacronym{filmrom}{FiLM-ROM}{FiLM Reduced Order Model}
\newacronym{fnn}{FNN}{Feedforward Neural Network}
\newacronym{fno}{FNO}{Fourier Neural Operator}
\newacronym{fom}{FOM}{Full-Order Method}
\newacronym{fv}{FV}{Finite Volume}
\newacronym{gca}{GCA-ROM}{Graph Convolutional Autoencoder}
\newacronym{gcn}{GCN}{Graph Convolutional Netork}
\newacronym{gdl}{GDL}{Geometric Deep Learning}
\newacronym{gfn}{GFN-ROM}{Graph Feedforward Network}
\newacronym{gnn}{GNN}{Graph Neural Network}
\newacronym{gno}{GNO}{Graph Neural Operator}
\newacronym{gpr}{GPR}{Gaussian Process Regression}
\newacronym{inr}{INR}{Implicit Neural Representation}
\newacronym{lbfgs}{L-BFGS}{Limited-memory BFGS}
\newacronym{ld}{LD}{Lid-driven Cavity}
\newacronym{ldfnn}{LD-FNN}{Latent Dynamics Feedforward Neural Network}
\newacronym{ldgcn}{LD-GCN}{Latent Dynamics Graph Convolutional Network}
\newacronym{ldm}{LDM}{Latent Dynamics Model}
\newacronym{ldnet}{LDNet}{Latent Dynamics Network}
\newacronym{mlp}{MLP}{MultiLayer Perceptron}
\newacronym{mh}{MH}{Moving Hole Advection}
\newacronym{mor}{MOR}{Model Order Reduction}
\newacronym{mse}{MSE}{Mean Squared Error}
\newacronym{nef}{NeF}{Neural Field}
\newacronym{nif}{NIF}{Neural Implicit Flow}
\newacronym{nn}{NN}{Neural Network}
\newacronym{no}{NO}{Neural Operator}
\newacronym{node}{NODE}{Neural Ordinary Differential Equation}
\newacronym{nrmse}{NRMSE}{Normalized Root Mean Square Error}
\newacronym{ode}{ODE}{Ordinary Differential Equation}
\newacronym{pde}{PDE}{Partial Differential Equation}
\newacronym{pinn}{PINN}{Physics-Informed Neural Network}
\newacronym{pod}{POD}{Proper Orthogonal Decomposition}
\newacronym{rb}{RB}{Reduced Basis}
\newacronym{rbf}{RBF}{Radial Basis Function}
\newacronym{relu}{ReLU}{Rectified Linear Unit}
\newacronym{rnn}{RNN}{Recurrent Neural Network}
\newacronym{rom}{ROM}{Reduced Order Model}
\newacronym{sa}{SA}{Square Advection}
\newacronym{sciml}{SciML}{Scientific Machine Learning}
\newacronym{shred}{SHRED}{Shallow Recurrent Decoder}
\newacronym{sindy}{SINDy}{Sparse Identification of Nonlinear Dynamics}
\newacronym{sindynet}{SINDy-Net}{SINDy-Net}
\newacronym{uat}{UAT}{Universal Approximation Theorem}

\begin{document}
\title[Meshless Latent Dynamics Models]{Meshless Latent Dynamics Models for Model Order Reduction of time-dependent Partial Differential Equations}
\author{Lorenzo Tomada$^{1}$ \orcidA, Federico Pichi$^{1}$ \orcidB, Gianluigi Rozza$^{1}$ \orcidC}
\address{$^1$ mathLab, Mathematics Area, SISSA, via Bonomea 265, I-34136 Trieste, Italy}
\email{\texttt{\{ltomada, fpichi, grozza\}@sissa.it}}

\input{sections/abstract}
\maketitle
%\newif\ifincludeKeywords
%\includeKeywordstrue 
\glsresetall

\tableofcontents
\input{sections/introduction}
\input{sections/methodology}
\input{sections/analysis}
\input{sections/results}
\input{sections/conclusion}

%\clearpage
\renewcommand{\bibfont}{\footnotesize}
\printbibliography[]
\input{sections/appendix}

\end{document}

%% file: sections/abstract.tex
\begin{abstract}
Latent dynamics learning has emerged as a prominent approach for \gls{mor} of time-dependent, parameterized \glspl{pde}.
Traditional autoencoder-based approaches are intrinsically tied to fixed spatial discretizations during both training and inference, motivating the development of meshless methodologies.
In this context, conditioning provides a natural mechanism for representing temporal evolution through spatially continuous functions, enabling mesh-agnostic \glspl{ldm}.

In this work, we introduce a unified mathematical framework for mesh-agnostic \glspl{ldm} based on conditioning in \glspl{nef}.
Leveraging a taxonomy of conditioning mechanisms (weak, strong, and pure), we formulate meshless \glspl{ldm} as \glspl{nef} conditioned on a latent state.
We establish a \gls{uat} for this class of models and analyze their stability by deriving Lipschitz bounds for different conditioning mechanisms.
Our analysis reveals an expressivity--stability trade-off, providing insight into why more expressive conditioning, despite its higher representational capacity, can potentially degrade performance during temporal extrapolation.
Finally, we provide a comprehensive comparison of meshless and mesh-based architectures in the continuous limit of the spatial discretization, showing that meshless models guarantee grid-independent Lipschitz bounds, unlike mesh-based counterparts.

Our experiments across Burgers' equation, Navier--Stokes flow past a cylinder, and linear advection show that meshless \glspl{ldm} can achieve predictive accuracy comparable to or better than their mesh-based counterparts while using only 2\% of the trainable parameters and exhibiting nearly constant error across spatial discretizations.

\textbf{Code availability}: \url{https://github.com/lorenzotomada/meshless_ldms}

\textbf{Keywords}: \emph{Reduced Order Modeling}; \emph{Latent Dynamics}; \emph{Meshless Models}; \emph{Conditional Neural Fields}; \emph{Time-dependent PDEs}; \emph{Scientific Machine Learning}; \emph{Computational Mechanics}.
\end{abstract}

%% file: sections/introduction.tex
\section{Introduction}\label{sec:introduction}
The numerical approximation of \glspl{pde} underpins a wide range of applications, including engineering~\cite{li_geometry-informed_2023}, medicine~\cite{fedele_comprehensive_2023}, and computational physics~\cite{pope_turbulent_2000}.
Nevertheless, a detailed investigation of parameterized systems using full-order models, e.g.\ \gls{fe} and \gls{fv} methods~\cite{quarteroni_numerical_2017}, is often infeasible due to the prohibitive computational burden of repeated simulations, which is further exacerbated for time-dependent problems~\cite{peherstorfer_data-driven_2016}.
\gls{mor} addresses this limitation by designing surrogate models that enable reliable many-query and real-time system investigation at a reduced computational cost~\cite{benner_model_2017, quarteroni_reduced_2016}.

While classical linear strategies such as the \gls{rb} approach~\cite{hesthaven_certified_2016,rozza_real_2024} are well-established and enable efficient online evaluation with certified error bounds, they frequently display significant limitations when applied to complex systems characterized by a slow-decaying Kolmogorov $n$-width~\cite{ohlberger_reduced_2016}.
Recent advances in deep learning have considerably broadened the scope of \gls{mor} by replacing the linear ansatz with nonlinear latent representations learned directly from data, frequently using \glspl{ae}~\cite{lee_model_2020,fresca_comprehensive_2021, pichi_graph_2024}.
In the setting of time-dependent systems, a prominent instance of this paradigm is latent dynamics modeling~\cite{takeishi_learning_2017, lusch_deep_2018}, which aims at capturing the system's temporal evolution within a low-dimensional latent space, while a separate mapping reconstructs the corresponding high-dimensional physical state.
This decoupling substantially reduces the computational complexity of time integration in \glspl{ldm} by exploiting the intrinsic low-dimensional structure of the solution manifold~\cite{linot_deep_2020}.
Importantly, the empirical success of these reduced architectures is now increasingly supported by rigorous theoretical guarantees, such as \glspl{uat}~\cite{farenga_latent_2025, tomada_latent_2026}. 

Despite their success, many existing latent space approaches remain inherently tied to a prescribed spatial discretization, as both the dimensionality reduction and reconstruction stages operate exclusively on mesh-based state vectors with a fixed number of degrees of freedom~\cite{kovachki_neural_2023}.
This limits their applicability across varying spatial resolutions and prevents evaluating the solution at arbitrary, off-grid locations.
Furthermore, it ties the model's parameter count to the specific discretization employed during training and evaluation.
As a result, changing the spatial resolution may require the use of interpolation, which introduces numerical errors, or training a new model, which incurs additional computational costs. Both options ultimately limit the transferability of the discovered dynamics across discretizations.

A promising alternative is provided by \glspl{nef}, which represent physical fields as continuous functions of the spatial coordinates~\cite{xie_neural_2022}.
By directly learning maps from these low-dimensional coordinates to physical quantities, \glspl{nef} naturally yield resolution-independent representations, decoupled from the underlying (typically high-dimensional) discretization of the domain~\cite{chen_crom_2023}.
However, time-dependent or parameterized systems require the field representation to account for changes in the system state without retraining.
To address this, the temporal evolution captured in the latent space is directly incorporated into the continuous field through \emph{conditioning}, a procedure by which the latent variables modulate the network's weights and biases.
The resulting \glspl{cnef} allow a single architecture to describe an entire family of spatial fields corresponding to different physical parameters and times.
Building upon this idea, recent methodologies combine low-dimensional latent dynamics with meshless decoders~\cite{serrano_operator_2023, yin_continuous_2023, regazzoni_learning_2024}, resulting in mesh-agnostic architectures for parameterized, time-dependent systems.

Nonetheless, the integration of \glspl{cnef} into dynamics modeling currently lacks a unified theoretical framework, leaving the design of latent conditioning mechanisms largely empirical.
This raises fundamental questions regarding the trade-offs among different conditioning schemes, particularly with respect to error accumulation over time.
More broadly, several theoretical properties of these architectures remain poorly understood, including approximation guarantees, stability under latent perturbations, and the continuous limit of mesh refinement (connected to the operator learning framework~\cite{berner_principled_2026}).

In this work, we address these gaps by introducing a unified framework for meshless \glspl{ldm} based on \glspl{cnef}, in which the decoder's dependence on the latent variables is interpreted as a conditioning mechanism.
Within this framework, we show that the conditioning strategy determines the sensitivity of the decoded solution to latent perturbations, revealing an expressivity--stability trade-off: highly expressive decoders can amplify temporal integration errors, whereas weak conditioning yields tighter bounds on this amplification.
We further establish a \gls{uat} for weakly conditioned meshless \glspl{ldm} (which naturally extends to stronger forms of conditioning), showing that weak conditioning retains sufficient expressivity while offering greater robustness to latent perturbations.
Moreover, we compare meshless and mesh-based \glspl{ldm} by investigating their Lipschitz constants in the continuous limit of mesh refinement, further quantifying how these bounds influence the magnitude of the learned latent trajectories.
Finally, we numerically validate our theoretical findings on three computational mechanics benchmarks: the Burgers' equation, the Navier--Stokes equations, and a linear advection problem.

The remainder of this work is organized as follows.
After a brief overview of the literature on \glspl{cnef} in \gls{sciml}, Section~\ref{sec:methodology} formulates the problem and introduces conditioning in \glspl{nn} alongside mesh-based and meshless methodologies.
Section~\ref{sec:analysis} presents our theoretical analysis and a comparison with standard \gls{sciml} paradigms.
Finally, Section~\ref{sec:results} validates the proposed approach on the three aforementioned computational mechanics benchmarks, while Section~\ref{sec:conclusion} summarizes our findings, limitations, and future research directions.

\subsection{Related works}
In the \gls{mor} literature, strategies are broadly classified into \emph{intrusive} approaches, which require projecting the underlying governing equations onto a reduced space, and \emph{non-intrusive} (or \emph{data-driven}) approaches, which rely entirely on snapshot data to construct surrogates~\cite{hesthaven_non-intrusive_2018}.
Within the latter category, the use of \glspl{ae}~\cite{hinton_reducing_2006,prince_understanding_2023} as a nonlinear extension of classical dimensionality reduction techniques, such as \gls{pod}~\cite{hesthaven_certified_2016}, has been widely investigated~\cite{lee_model_2020,fresca_comprehensive_2021}.
This paradigm has been extensively applied to latent dynamics modeling, where the reduced state is advanced in time by means of a learned evolution operator, for instance using~\glspl{node}~\cite{lusch_deep_2018, rubanova_latent_2019} or \gls{sindy}~\cite{champion_data-driven_2019,bakarji_discovering_2023, conti_reduced_2023}.

Building upon the non-intrusive \gls{mor} concepts outlined above, several contemporary approaches have substituted discrete spatial decoders with \glspl{nef}~\cite{sitzmann_implicit_2020,seidman_nomad_2022, li_smore_2026}, bridging reduced order modeling with coordinate-based functional representations, which are resolution-invariant and thus aligned with the operator learning paradigm~\cite{kovachki_operator_2024}. 
\glspl{pinn}~\cite{raissi_physics-informed_2019} provide an early example of coordinate-based architectures in \gls{sciml}.
Expanding on these foundations, recent approaches have further investigated the integration of decoder conditioning with respect to a latent state, both in physics-based~\cite{kim_physics-informed_2024} and data-driven scenarios~\cite{regazzoni_learning_2024}.

Existing approaches based on \glspl{nef} can be broadly categorized according to the quantity evolved during time integration.
One class of methods evolves the network weights of the \gls{nef} itself, thus updating the entire functional representation over time~\cite{bruna_neural_2024,wan_evolve_2023,du_evolutional_2021,rosso_weight-parameterization_2025,akhare_implicit_2025}.
Alternatively, several approaches evolve only a low-dimensional latent state while conditioning the decoder on the latent representation~\cite{serrano_operator_2023,kim_generalizable_2023}.
The present work focuses on this latter strategy, where common methods for evolving the latent dynamics include discrete recurrent updates~\cite{chen_crom_2023}, \glspl{node}~\cite{yin_continuous_2023, li_deep_2026}, transformer-based architectures~\cite{hagnberger_vectorized_2024,han_predicting_2022}, and sequence models~\cite{tiwari_latent_2025}.

A key component of these architectures is the conditioning mechanism through which the latent representation modulates the decoder.
Originally introduced in computer vision~\cite{xie_neural_2022}, conditioning strategies are commonly categorized into \emph{global} and \emph{local} approaches.
Global conditioning injects a shared latent representation uniformly across all spatial query points~\cite{shokar_conditioning_2025}, whereas local conditioning additionally depends on the spatial query or its neighborhood, thereby enabling operator learning capabilities~\cite{azizzadenesheli_neural_2024}.
Practical implementations include input concatenation, hypernetworks, and \gls{film}~\cite{perez_film_2017, koishekenov_exploration_2023, brockschmidt_gnn-film_2020}, with the latter becoming particularly popular in both \glspl{cnef}~\cite{guo_conditional_2026} and \glspl{no}~\cite{wu_linear-nonlinear_2026,mikus_ae-vit_2026}, as well as in hybrid numerical methods~\cite{chen_hypernetwork-conditioned_2026}.
More broadly, several existing \gls{no} architectures can be interpreted as \glspl{cnef}~\cite{wang_cvit_2025}, or as functional generalizations thereof~\cite{berner_principled_2026,alkin_universal_2024}, establishing a unified language for function approximation and operator learning.

Finally, recent developments have increasingly moved beyond the classical encoder--decoder paradigm toward encoder-free representations~\cite{regazzoni_learning_2024, faraji_shallow_2025,tomasetto_reduced_2025}.
By avoiding fixed-dimensional spatial encoders, these architectures reduce the dependence of the model on the underlying discretization and naturally shift the focus toward the decoder as the component mapping latent representations back to the physical space.
At the same time, the rapid development of such mesh-agnostic architectures has highlighted the need for a stronger theoretical understanding of deep learning-based \gls{mor}, building on analyses developed for discrete settings~\cite{anderson_mlasdi_2026,brivio_error_2024, franco_approximation_2023} and their links with \glspl{no}~\cite{bhattacharya_model_2021}.
In particular, while the conditioning mechanisms discussed above are central to the expressivity of \glspl{cnef}, their behavior in time-dependent settings and under spatial refinement remains less understood.
Motivated by this gap, we provide a theoretical characterization of mesh-agnostic \glspl{ldm}.
Specifically, we derive a \gls{uat} for \glspl{cnef} and characterize their sensitivity to latent perturbations, while also investigating the behavior of decoder Lipschitz constants under spatial refinement and highlighting their dependence on the discretization parameter for mesh-based models.
Overall, these results shed light on the stability and discretization dependence of continuous latent representations.

%% file: sections/methodology.tex
\section{Latent Dynamics Models}\label{sec:methodology}
In this section, we introduce the mathematical framework underlying \glspl{ldm} for parameterized \glspl{pde}.
We begin by stating the formal problem and introducing \glspl{ldm} as a reduced order representation of the temporal evolution of the system.
Next, we review \glspl{nef} and the conditioning mechanisms for \glspl{nn}, showing that they provide a natural framework for meshless \glspl{ldm}.
Building on this perspective, we comprehensively present both mesh-based and mesh-agnostic architectures, highlighting similarities and differences among several existing methods.

\subsection{Formal problem statement and Latent Dynamics Models}\label{subsec:introduction}
Consider the parameterized evolutionary \gls{pde}:
\begin{equation}\label{eq:abstract_PDE}
    \begin{cases}
       \dfrac{\partial\bm{u}}{\partial t}(\bm x, t) = \mathcal{F}(t, \bm u(\bm x, t);  \bm\mu(t)) & \forall(\bm x,t)\in\Omega\times (t_0,T],\\
       \bm u(\bm x, t_0) = \bm u_0(\bm x;\bm\mu)& \forall\bm x \in\Omega,
    \end{cases}
\end{equation}
depending on a possibly time-dependent signal\footnote{If the signals under investigation are independent of time, we identify $\mP$ with a subset of $\R^{d_{\bm\mu}}$ with a slight abuse of notation.} $\bm\mu\in\mP\subset\{\bm\nu:[t_0,T]\to\R^{d_{\bm\mu}}\}$, where $\Omega\subset\R^d$ is a bounded spatial domain, $I=[t_0,T]$ denotes the time interval, and $\bm u:{\Omega}\times I \to\R^{d_{\bm u}}$ is the solution of interest.
Additionally, $\mathcal{F}$ denotes a suitable differential operator, and $\bm u_0(\bm x;\bm\mu)$ is the initial condition.
In general, several choices for $\bm u_0(\bm x;\bm\mu)$ may be considered for the same parameter instance $\bm\mu\in\mP$.
For clarity, given $\bm\mu\in\mP$, we will denote by $\bm u(\bm x,t;\bm\mu)$ the corresponding solution to Equation~\eqref{eq:abstract_PDE} evaluated at the point $\bm x$ at time $t$.

In practice, the solution of Equation~\eqref{eq:abstract_PDE} is usually sought employing a suitable numerical discretization~\cite{quarteroni_numerical_2017}, which we refer to as the \emph{full-order method}.
We will assume, as commonly holds for numerical methods for time-dependent \glspl{pde}, that the procedure consists of two different steps.
First, a spatial discretization is performed, yielding a set of \glspl{ode} on the grid points, which reads as follows:
\begin{equation}\label{eq:discretized_abstract_PDE}
    \begin{cases}
    \dot{\bm{u}}_h(t;\bm\mu) = \bm{f}_h(t, \bm u_h(t;\bm\mu); \bm\mu(t)) & \forall t\in (t_0,T],\\
       \bm u_h(t_0;\bm\mu) = \bm u_{0,h}(\bm\mu),
    \end{cases}
\end{equation}
where $N_h$ denotes the number of \glspl{dof} associated with the discretization parameter $h$, and $\bm u_h:I\times\mP\to\R^{N_h\times d_{\bm u}}$ represents the discretized solution at the $N_h$ selected \glspl{dof}\footnote{For simplicity, we assume that the number of \glspl{dof} coincides with the number of grid points, although this assumption is not required.}, while $\bm u_{0,h}:\mP\to\R^{N_h\times d_{\bm u}}$ and $\bm f_h:(t_0,T]\times\R^{N_h\times d_{\bm u}}\times\R^{d_{\bm\mu}}\to\R^{N_h\times d_{\bm u}}$ denote suitable discretizations of $\bm u_0$ and $\mathcal{F}$ at the grid points, respectively.
Then, the semi-discrete system in~\eqref{eq:discretized_abstract_PDE} can be integrated using any \gls{ode} solver, providing an approximation of the solution to the original system in Equation~\eqref{eq:abstract_PDE}.

In many applications, however, $N_h\gg1$, meaning that a large number of \glspl{dof} is required to achieve sufficient accuracy, leading to high computational costs and infeasible simulations in parameterized settings.
As a result, suitable \gls{mor} techniques have been developed to reduce the computational burden~\cite{benner_model_2017, hesthaven_certified_2016, quarteroni_reduced_2016}.

A common approach for dynamical systems is to model the temporal evolution in a low-dimensional reduced space~\cite{lee_model_2020} by identifying a suitable system of \glspl{ode}, which we refer to as the \emph{latent dynamics}.
Once the system has been integrated, the resulting latent trajectories are mapped back to the full-order space by means of a decoder function, providing an approximation of the physical solution.
For meshless models, the decoder can be formulated as a \gls{nef}, with the latent state acting as a condition that determines the reconstructed spatial field.
This naturally motivates a closer examination of how the latent state is incorporated into the decoder, which we formalize through the conditioning framework introduced next.

\subsection{Conditioning in Neural Fields}\label{subsec:conditioning}
We now introduce \glspl{nef} and formalize the mechanisms for conditioning them, expanding upon the frameworks established in~\cite{xie_neural_2022, koishekenov_exploration_2023, brockschmidt_gnn-film_2020}, to which we refer for a broader overview.
Importantly, we aim to analyze these mechanisms from a theoretical perspective, discussing their expressivity and the Lipschitz stability of the network.
The formal mathematical notation employed throughout this section is detailed in Appendix~\ref{appendix:formalism}.

\subsubsection{Definitions and types of conditioning}
Given a bounded spatial domain $\Omega\subset\R^{d}$ and, possibly, a time interval $I=[t_0,T]$, a \gls{nef}, or \emph{implicit neural representation}, is a \gls{nn} $\bm g_{\bm\theta}:\Omega\times I\to\R^{d_{\bm u}}$ parameterized with weights $\bm\theta\in\R^{N_{\bm\theta}}$ that directly maps spatio-temporal coordinates $(\bm x,t)\in\Omega\times I$ to the corresponding field values, unlike networks operating on discrete grids.

To capture parameterized families of fields without retraining, conditioning mechanisms enable the weights of a \gls{nn} to adapt to characteristic quantities $\bm a\in\mA\subset\R^{N_{\bm a}}$, such as time, physical parameters, or latent states.
We now define the structure of a conditioned layer.
 
\begin{definition}[Conditioned layer]\label{def:conditioned_layer}
    Let $\bm W^{(k)}:\mA\to\R^{N_{k}\times N_{k-1}}$ and $\bm b^{(k)}: \mA\to\R^{N_k}$.
    We say that a \gls{nn} layer\footnote{With a slight abuse of notation, we will denote with $\bm l\ek$ both the family of layers $\left\{\bm l\ek(\,\cdot\,|\,\bm a)\right\}_{\bm a\in\mA}$ and a specific realization for given $\bm a\in\mathbb A$.} $\lk$ is \emph{(purely)  conditioned} on a parameter $\bm a$, and we denote it by $\bm l\ek(\cdot\,|\,\bm a):\R^{N_{k-1}}\to\R^{N_k}$, if for any input $\hkmo\in\R^{N_{k-1}}$ its forward pass can be written as
\begin{equation}\label{eq:conditioning}
        \hk=\bm l\ek(\hkmo\,|\,\bm a)=\sk\left(\wk(\bm a)\,\hkmo+\bk(\bm a)\right),
\end{equation}
where $\sk:\R\to\R$ is an activation function applied elementwise.
\end{definition}

Conditioning mechanisms can be broadly divided into \emph{local} and \emph{global} conditioning~\cite{xie_neural_2022}.
In the former, the conditioning parameter $\bm a$ is a field depending on the query point $\bm x$, allowing the modulation to vary across the spatial domain, consistent with the operator learning setting~\cite{wang_cvit_2025}.
In contrast, global conditioning employs the same conditioning parameter across the spatial domain, uniformly modulating the network independently of the query point.
In this work, we focus on global conditioning, as it provides a direct way to describe the modulation of the spatial decoder by a latent state that is independent of the query point.
Several forms of global conditioning have been proposed, including hypernetworks~\cite{pan_neural_2023, sitzmann_implicit_2020}, \gls{film}~\cite{yin_continuous_2023, hagnberger_vectorized_2024}, input concatenation~\cite{regazzoni_learning_2024, serrano_operator_2023}, and low-rank weight parameterizations~\cite{skorokhodov_adversarial_2021}.
To classify these mechanisms according to how the conditioning parameter $\bm a$ interacts with the layer parameters, we follow the discussion in~\cite{koishekenov_exploration_2023}.
In particular, we distinguish three paradigms: pure conditioning (Definition~\ref{def:conditioned_layer}), and weak and strong conditioning, which we formalize below and which are special cases of pure conditioning, as shown in Lemma~\ref{lemma:conditioning}.

\begin{definition}[Types of conditioning]\label{def:types}
    The conditioned layer $\lk:\R^{N_{k-1}}\to\R^{N_{k}}$ is said to be:
    \begin{enumerate}[label=(\roman*)]
        \item \emph{weakly conditioned} (concatenating input features with the condition), if the weight matrix $\tilde{\bm W}\ek\in\R^{N_{k}\times(N_{k-1}+N_{\bm a})}$ is independent of $\bm h\ekmo$ and $\bm a$, and the update is given by
    \begin{equation}\label{eq:weak}
        \hk=\sk\left(\tilde{\bm W}^{(k)}(\hkmo\oplus\bm a)\right),
    \end{equation}
    where the symbol $\oplus$ denotes the concatenation of $\hkmo$ and $\bm a$ as a vector in $\R^{N_{k-1}+N_{\bm a}}$;
    \item \emph{strongly conditioned} (or \emph{FiLM conditioned}) if the weight matrix $\wkh\in\R^{N_{k}\times N_{k-1}}$ is independent of $\hkmo$ and $\bm a$, while $\fk:\mA\to\R^{{N_k}}$ and $\bk:\mA\to\R^{N_{k}}$ depend on $\bm a$, so that the update is given by
    \begin{equation}\label{eq:strong}
        \hk=\sk\left(\fk(\bm a)\odot\wkh\hkmo +\bk(\bm a)\right),
    \end{equation}
    where the symbol $\odot$ denotes the Hadamard (i.e., componentwise) product.
    \end{enumerate}
\end{definition}

By Lemma~\ref{lemma:conditioning}, weak conditioning corresponds to the choice of only adding a conditional bias depending on $\bm a$ without altering the parameters of $\wk$, i.e., there exist suitable $\wkh\in\R^{N_k\times N_{k-1}}$ and $\wka\in\R^{N_k\times N_{\bm a}}$, independent of $\bm h\ekmo$ and $\bm a$, such that $\bm W\ek(\bm a)=\wkh$ and $\bm b\ek(\bm a)\coloneq \wka\bm a$.
In contrast, both strong and pure conditioning allow $\bm a$ to parameterize the weight matrix $\wk$.
In particular, pure conditioning offers maximum expressivity by dynamically generating the entire transformation matrix $\wk$ and bias $\bk$; strong conditioning (\gls{film}) restricts this parameterization to an elementwise gating $\bm\phi^{(k)}$ applied to the output of a static linear layer $\wkh$~\cite{brockschmidt_gnn-film_2020}, preventing $\bm a$ from contributing to channel mixing.
We highlight that the conditioning mechanisms in Definition~\ref{def:types} form a hierarchy of increasing expressivity: Remark~\ref{rmk:all_ones} and Lemma~\ref{lemma:conditioning} establish that weak conditioning is a special case of strong conditioning, which is, in turn, a special case of pure conditioning.

While Definition~\ref{def:conditioned_layer} is very general, in practice $\bm W^{(k)}$, $\bm b^{(k)}$, and $\bm\phi^{(k)}$ are implemented as \glspl{nn}, commonly referred to as \emph{hypernetworks}, allowing them to be learned from data.
Unlike the dense hypernetworks required for pure conditioning, which scale poorly and might incur prohibitive training costs when conditioning on a latent state $\bm a=\bm s(t;\bm\mu)$ (as discussed in Appendix~\ref{appendix:formalism}), weak and \gls{film} conditioning only modulate selected components.
By retaining a set of base weights independent of $\bm a$, this structural separation (mimicking the well-known technique of separation of variables for \glspl{pde}~\cite{evans_partial_1998}) significantly reduces parameter counts and the number of operations in a forward pass (see Appendix~\ref{appendix:formalism}) and enhances interpretability.
Moreover, it can be formally justified by means of the following \gls{uat}.
\begin{corollary}[FiLM neural networks are universal approximators]\label{theo:film}
Shallow networks with the first layer conditioned using \gls{film} and a second non-conditioned linear layer are universal approximators of functions $g:\R^d\times\mathbb A\to\R$.
\end{corollary}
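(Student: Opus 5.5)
It is stated as a corollary, so the natural route is to reduce it to the classical \gls{uat} for shallow networks (Cybenko/Hornik/Leshno--Lin--Pinkus--Schocken) combined with the hierarchy of conditioning mechanisms. That hierarchy says weak conditioning is a special case of \gls{film} conditioning, obtained by taking $\fk\equiv\bm 1$ (Remark~\ref{rmk:all_ones}, Lemma~\ref{lemma:conditioning}). I will make the function class precise first: $g$ is continuous on a compact set $K\subset\R^d\times\mA$ (with $\mA$ compact, or $g$ restricted to a compact subset), approximation is in the uniform norm on $K$, and $\sigma^{(1)}$ is a continuous non-polynomial activation.

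The first step is to view the pair $(\bm x,\bm a)$ as a single input $\bm z=\bm x\oplus\bm a\in\R^{d+N_{\bm a}}$. By the classical \gls{uat}, for every $\varepsilon>0$ there exist a width $N_1$, a matrix $\tilde{\bm W}\in\R^{N_1\times(d+N_{\bm a})}$, a bias $\bm c\in\R^{N_1}$, and an output vector $\bm v\in\R^{N_1}$ such that
\begin{equation*}
\sup_{(\bm x,\bm a)\in K}\Bigl|g(\bm x,\bm a)-\bm v^\top\sigma^{(1)}\bigl(\tilde{\bm W}(\bm x\oplus\bm a)+\bm c\bigr)\Bigr|<\varepsilon.
\end{equation*}
The constant bias $\bm c$ can either be kept as an extra constant or absorbed by augmenting $\bm a$ with a constant component, if Definition~\ref{def:types}(i) is read strictly without a bias.

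The second step is to split $\tilde{\bm W}=[\bm W_{\bm h}\,|\,\bm W_{\bm a}]$. The hidden layer then reads $\sigma^{(1)}(\bm W_{\bm h}\bm x+\bm W_{\bm a}\bm a+\bm c)$, which is exactly a \gls{film} layer in the sense of~\eqref{eq:strong}. Here $\bm\phi^{(1)}(\bm a)\equiv\bm 1$ and the affine map $\bm b^{(1)}(\bm a)=\bm W_{\bm a}\bm a+\bm c$. The second layer $\bm v^\top(\cdot)$ is a non-conditioned linear layer, so the FiLM class contains a dense subclass and is therefore itself dense.

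The only delicate point is to ensure that the admissible hypernetwork class for $\bm\phi^{(1)},\bm b^{(1)}$ contains constants and affine maps. If instead these are required to be neural networks with a specific activation, I would handle it in one of two ways. One option is to note that a constant is a network with zero weights. The other is to approximate the affine map $\bm a\mapsto\bm W_{\bm a}\bm a+\bm c$ uniformly on compact $\mA$ by a network, again via the \gls{uat}, and then use the uniform continuity of $\sigma^{(1)}$ on bounded sets to control the resulting perturbation, at the cost of an additional $\varepsilon$. This yields the claim.
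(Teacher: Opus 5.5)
Your proposal is correct and follows essentially the same route as the paper: apply a classical shallow-network universal approximation theorem to the concatenated input $\bm x\oplus\bm a$, split the first-layer weight matrix as in Lemma~\ref{lemma:conditioning}, and read the result as a FiLM layer with $\bm\phi\equiv\bm 1$ (Remark~\ref{rmk:all_ones}) and affine bias $\bm b(\bm a)=\bm W_{\bm a}\bm a+\tilde{\bm b}$. Your extra care about compactness, the activation class, and which hypernetwork classes are admissible makes explicit what the paper covers under ``the usual assumptions.''
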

The proof of Corollary~\ref{theo:film} is reported in Appendix~\ref{appendix:formalism} and is an immediate consequence of Remark~\ref{rmk:all_ones} and classical \glspl{uat} for weak conditioning.

Although conditioning a single layer already yields a conditioned \gls{nn}, empirical evidence suggests that conditioning multiple layers often leads to performance improvements~\cite{koishekenov_exploration_2023}.
Nevertheless, as discussed in Section~\ref{sec:analysis}, latent dynamics modeling exhibits structural challenges that may prevent this increased depth of conditioning from directly translating into improved performance.

\subsubsection{Lipschitz constant of conditioned layers}
We now analyze the impact of conditioning on the Lipschitz constant of the network.
In particular, in the next proposition (whose proof is deferred to Appendix~\ref{appendix:stability}), we show that a more expressive conditioning yields a larger upper bound on the Lipschitz constant of the network with respect to the conditioning parameter, potentially impacting the stability of latent dynamics learning and \gls{mor}.

\begin{proposition}[Stability result for conditioned networks]\label{prop:lipschitz}
    Let $\bm l$ be a conditioned layer, and assume that $\sigma:\R\to\R$, $\bm W:\mathbb A\to\R^{N_\mathrm{out}\times N_\mathrm{in}}$ and $\bm b:\mathbb A\to\R^{N_\mathrm{out}}$ are Lipschitz continuous with respect to the $p$-norm $\|\cdot\|_p$, with $p\geq1$, and denote their Lipschitz constants by $C_\sigma$, $C_{\bm W}$ and $C_{\bm b}$, respectively.
    Then, for each $\bm h\in\R^{N_\mathrm{in}}$, it holds that
    \begin{equation}\label{eq:stability}
        \|\bm{l}(\bm h\,|\,\bm a_1)-\bm{l}(\bm h\,|\,\bm a_2)\|_p\leq C\|\bm a_1-\bm a_2\|_p,\quad \forall \bm a_1,\bm a_2\in\mathbb A,
    \end{equation}
    where $C=C_\sigma(C_{\bm W}\|\bm h\|_p+C_{\bm b})$.
    If $\bm b(\bm a)=\bm W\ua \bm a$,  then $C_{\bm b}\leq\|\bm W_{\bm a}\|_p$.
In addition, if strong conditioning is used and $\bm\phi:\mathbb A\to\R^{N_\mathrm{out}}$ is Lipschitz continuous with constant $C_{\bm\phi}$, it holds that $C_{\bm W}\leq C_{\bm \phi}\|\bm W_{\bm h}\|_p$.
Finally, if weak conditioning is used, $C_{\bm W}=0$.
\end{proposition}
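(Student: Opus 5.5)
The plan is a direct chain of Lipschitz estimates, starting from the definition of a conditioned layer in Definition~\ref{def:conditioned_layer}. Fix $\bm h\in\R^{N_\mathrm{in}}$ and $\bm a_1,\bm a_2\in\mathbb A$. Since $\sigma$ acts elementwise and is $C_\sigma$-Lipschitz on $\R$, each component satisfies $|\sigma(y_i)-\sigma(z_i)|\le C_\sigma|y_i-z_i|$. Taking the $p$-norm gives $\|\sigma(\bm y)-\sigma(\bm z)\|_p\le C_\sigma\|\bm y-\bm z\|_p$ for every $p\ge1$. This reduces the first claim to bounding the pre-activation difference
\begin{equation*}
\|(\bm W(\bm a_1)-\bm W(\bm a_2))\bm h+\bm b(\bm a_1)-\bm b(\bm a_2)\|_p .
\end{equation*}
By the triangle inequality and the definition of the induced operator norm, this quantity is at most
\begin{equation*}
\|\bm W(\bm a_1)-\bm W(\bm a_2)\|_p\|\bm h\|_p+\|\bm b(\bm a_1)-\bm b(\bm a_2)\|_p\le (C_{\bm W}\|\bm h\|_p+C_{\bm b})\|\bm a_1-\bm a_2\|_p .
\end{equation*}
Here the Lipschitz continuity of $\bm W$ is understood with respect to the matrix norm induced by $\|\cdot\|_p$; I would state this convention explicitly. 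Combining the two estimates yields \eqref{eq:stability} with $C=C_\sigma(C_{\bm W}\|\bm h\|_p+C_{\bm b})$.

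The remaining claims are specialisations.

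\begin{itemize}
\item If $\bm b(\bm a)=\bm W_{\bm a}\bm a$, then $\|\bm b(\bm a_1)-\bm b(\bm a_2)\|_p=\|\bm W_{\bm a}(\bm a_1-\bm a_2)\|_p\le\|\bm W_{\bm a}\|_p\|\bm a_1-\bm a_2\|_p$, so $C_{\bm b}\le\|\bm W_{\bm a}\|_p$.
\item Under strong conditioning, the effective weight matrix is $\bm W(\bm a)=\mathrm{diag}(\bm\phi(\bm a))\bm W_{\bm h}$, because $\bm\phi(\bm a)\odot(\bm W_{\bm h}\bm h)=\mathrm{diag}(\bm\phi(\bm a))\bm W_{\bm h}\bm h$. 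Hence $\bm W(\bm a_1)-\bm W(\bm a_2)=\mathrm{diag}(\bm\phi(\bm a_1)-\bm\phi(\bm a_2))\bm W_{\bm h}$. Submultiplicativity then gives the bound $\|\mathrm{diag}(\bm\phi(\bm a_1)-\bm\phi(\bm a_2))\|_p\|\bm W_{\bm h}\|_p$.
\item Under weak conditioning, Lemma~\ref{lemma:conditioning} gives $\bm W(\bm a)=\bm W_{\bm h}$ constant, so $C_{\bm W}=0$. The bias has the form $\bm W_{\bm a}\bm a$, so the first bullet applies.
\end{itemize}

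The only non-routine point is the induced $p$-norm of a diagonal matrix in the strong-conditioning case. It equals $\max_i|\phi_i(\bm a_1)-\phi_i(\bm a_2)|=\|\bm\phi(\bm a_1)-\bm\phi(\bm a_2)\|_\infty$. This is bounded above by $\|\bm\phi(\bm a_1)-\bm\phi(\bm a_2)\|_p\le C_{\bm\phi}\|\bm a_1-\bm a_2\|_p$, using $\|\cdot\|_\infty\le\|\cdot\|_p$. It follows that $C_{\bm W}\le C_{\bm\phi}\|\bm W_{\bm h}\|_p$.

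The main thing to check is therefore that the chain of norm comparisons runs in the right direction, which it does since the $\infty$-norm is dominated by every $p$-norm. I would also make explicit that all constants are taken over $\mathbb A$.
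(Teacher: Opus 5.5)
Your proposal is correct and follows the paper's proof essentially step by step. The constant $C$ comes from the same chain: the elementwise Lipschitz bound for $\sigma$, the triangle inequality, and compatibility of the induced $p$-norm. The bias and weak-conditioning claims are handled identically, and your factorization $\bm W(\bm a_1)-\bm W(\bm a_2)=\mathrm{diag}(\bm\phi(\bm a_1)-\bm\phi(\bm a_2))\bm W_{\bm h}$ with $\|\mathrm{diag}(\bm d)\|_p=\|\bm d\|_\infty\le\|\bm d\|_p$ is just a matrix restatement of the paper's inequality $\|\bm u\odot\bm v\|_p\le\|\bm u\|_\infty\|\bm v\|_p\le\|\bm u\|_p\|\bm v\|_p$ used inside the operator-norm maximum.
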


A direct consequence of the previous proposition is that weak conditioning benefits from greater stability properties since the Lipschitz bound $C = C_\sigma C_{\bm b}$ only depends on the additive bias shift.
In contrast, strong and pure conditioning introduce a non-zero $C_{\bm W}$ term, coupling the parameter perturbation bound directly to the magnitude of the intermediate features $\|\bm h\|_p$. 

Although this term can be mitigated by careful hypernetwork design, the dependence on $\|\bm h\|_p$ suggests that in deep architectures, perturbations in $\bm a$ amplify multiplicatively across layers, justifying the standard practice of employing suitable kinds of normalization~\cite{prince_understanding_2023, koishekenov_exploration_2023} to help control the feature norms $\|\bm h\|_p$, thereby stabilizing the forward pass.
As we will show in Subsection~\ref{subsec:stability_conditioning}, controlling this bound is critical when $\bm a$ represents a latent state evolving via an approximate \gls{ode}, where integration errors can be amplified by large Lipschitz constants.

\subsection{Meshless and mesh-based Latent Dynamics Models}\label{subsec:ldms}
We now formally introduce the setting of latent dynamics modeling and discuss its use in combination with meshless and mesh-based models, which are illustrated in Figure~\ref{fig:architecture}.
For \glspl{ldm}, the core idea is to identify a set of \glspl{ode} governing the latent dynamics, which can subsequently be evolved in time.
Decoding the reduced state then yields an efficient approximation of the original solution.
The choice of the decoder determines the mathematical nature of the output: it can either be a set of discrete values associated with $N_h$ degrees of freedom or a function evaluable at arbitrary points $\bm x\in\Omega$, as in the operator learning paradigm~\cite{berner_principled_2026}.

\begin{figure}[tbp]
    \centering
    \includegraphics[width=\textwidth]{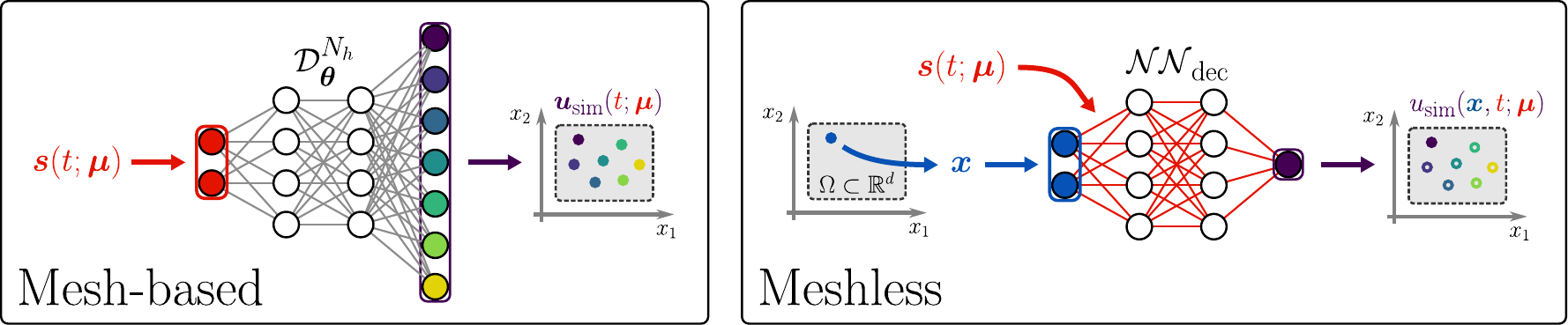}
    \caption{Sketches of mesh-based and meshless decoder architectures.}
    \label{fig:architecture}
\end{figure}

A key challenge in dynamical systems modeling is ensuring that the resulting \gls{rom} preserves causality, i.e., the intrinsic temporal consistency of Equations~\eqref{eq:abstract_PDE} and~\eqref{eq:discretized_abstract_PDE}.
Although alternative approaches exist~\cite{pfaff_learning_2021,chen_time_2025, kovachki_operator_2024}, a common strategy consists in describing the temporal evolution of the system in a suitable reduced space $\R^n$,  with $n\ll N_h$~\cite{lusch_deep_2018}, through a set of \glspl{ode}:
\begin{equation}\label{eq:latent_dynamics}
\begin{cases}
    \dot{\bm z}(t;\bm\mu) = \bm f_n(t,\bm z(t;\bm\mu); \bm\mu(t)) &\forall (t,\bm\mu)\in (t_0,T]\times\mP, \\
    \bm z(t_0;\bm\mu) = \enh(\bm u_{0,h}(\bm \mu)) &\forall \bm\mu\in\mP,
\end{cases}
\end{equation}
where $\bm z(t;\bm\mu)\in\R^n$ represents the reduced state of the system, $\enh(\bm u_{0,h}(\bm\mu))\in\R^n$ is an encoding of the full-order initial condition $\bm u_{0,h}$, and $\bm f_n:(t_0,T]\times\R^n\times\R^{d_{\bm\mu}}\to\R^n$ defines the reduced dynamics.
Equation~\eqref{eq:latent_dynamics} can then be integrated using an \gls{ode} solver to efficiently evolve the latent state in time and reconstruct an approximation of the full-order solution through a suitable decoder~\cite{lee_model_2020}. 
The key benefit is that the dynamical evolution is modeled in a low-dimensional latent space, thereby reducing the computational complexity of the original system.
Moreover, the reduced representation often provides additional insight into the underlying mechanisms governing the system~\cite{champion_data-driven_2019, conti_veni_2026, tomada_sparse_2025}.

In this setup, the encoder only influences the system via the choice of the latent initial condition.
If the latter is fixed for all $\bm\mu\in\mP$, the encoder is trivial and its output can be set to $\bm{0}_{\R^n}$ without loss of generality~\cite{regazzoni_learning_2024, tomada_latent_2026}.

An important property of the latent model is time continuity~\cite{farenga_latent_2025}: in contrast to models that directly learn a discrete-time evolution map with fixed $\Delta t$, Equation~\eqref{eq:latent_dynamics} defines a continuous-time trajectory that can be evaluated at arbitrary time instances independently of the time discretization used by the numerical solver.

The reconstruction of the full-order solution from the latent state generally follows two complementary paradigms: a discrete mesh-based decoder or a continuous meshless decoder.
The former employs a map $\decnh:\R^n\to\R^{N_h\times d_{\bm u}}$ that reconstructs the solution simultaneously at all grid points of a fixed spatial discretization.
Conversely, the meshless approach utilizes a continuous function $\NN{dec}:\R^d\times\R^n\to\R^{d_{\bm u}}$ that can be queried at arbitrary spatial coordinates $\bm x\in\Omega$.
Thus, while both paradigms share the same latent dynamics formulation, they differ in the structure of their decoders.

In practice, the encoder $\enh$, the latent dynamics function $\bm f_n$, and the decoder are generally unknown and must be approximated from data.
Accordingly, the encoder and the latent dynamics in Equation~\eqref{eq:latent_dynamics} are replaced by learned approximations $\ethnh$ and $\NN{dyn}$, respectively.
Similarly, the decoder is given by the learned map $\decnh$ in the mesh-based setting and by $\NN{dec}$ in the meshless one.
In particular, we denote by $\bm s(t;\bm\mu)$ the approximate latent trajectory obtained by integrating the latent dynamics with $\bm f_n$ replaced by $\NN{dyn}$.

\subsubsection{Mesh-based dynamics models}
In the mesh-based setting, under suitable regularity assumptions~\cite{farenga_latent_2025}, a low-dimensional latent state $\bm z(t;\bm\mu)\in\R^n$ is said to satisfy a latent dynamics problem if there exist an encoder $\enh:\R^{N_h\times d_{\bm u}}\to\R^n$, a decoder $\mathcal{D}^{N_h}:\R^n\to\R^{N_h\times d_{\bm u}}$ and a latent dynamics function $\bm f_n:(t_0,T]\times\R^n\times\R^{d_{\bm\mu}}\to\R^n$ such that, for all $(t,\bm\mu)\in I\times\mP$,
\begin{equation}\label{eq:LDM_meshbased1}
    \bm u_h(t;\bm\mu)\approx \mathcal{D}^{N_h}\left(\enh(\bm u_{0,h}(\bm \mu)) +\int_{t_0}^t \bm f_n(\tau, \bm z(\tau;\bm\mu);\bm\mu(\tau))d\tau\right)\in\R^{N_h\times d_{\bm u}},
\end{equation}
where $\bm z$ solves the reduced initial value problem given by Equation~\eqref{eq:latent_dynamics}.

We refer to a \gls{nn} approximation of a mesh-based \gls{ldm} as a \gls{ldfnn} model and denote by $\decnh$ the corresponding learned decoder.
Under appropriate assumptions, a \gls{uat} analogous to Theorem~\ref{theo:uat} holds for these architectures~\cite{farenga_latent_2025}, guaranteeing that, for any $\varepsilon>0$, one can learn approximations that are continuous in time and with an error bounded by $\varepsilon$.

Despite these theoretical guarantees, mesh-based \glspl{ldm} display some significant computational bottlenecks.
Since the solution is reconstructed simultaneously across the entire spatial grid, the encoder and decoder networks $\ethnh$ and $\mathcal{D}_{\bm\theta}^{N_h}$ need to have at least $n N_hd_{\bm u}$ trainable parameters even assuming that only a single fully-connected layer is used.

This $\mathcal{O}(N_h)$ scaling hinders their application to high-resolution meshes.
Furthermore, mesh-based models are intrinsically tied to their training grid, and they cannot be evaluated at arbitrary spatial coordinates without relying on specialized techniques~\cite{lu_learning_2021,morrison_gfn_2024}.
To address these structural limitations, we turn our attention to meshless architectures.

\subsubsection{Meshless dynamics models}
To overcome the $\mathcal{O}(N_h)$ scaling bottleneck of mesh-based approaches and enable multi-resolution training and testing, several approaches take as input the spatial coordinates $\bm x\in\Omega$ to return an approximation of the solution $\bm u(\bm x, t;\bm\mu)$.
A prominent example is that of \glspl{pinn}~\cite{raissi_physics-informed_2019}, which act as \glspl{nef} trained via \gls{pde} residuals~\cite{xie_neural_2022}.

Unlike the mesh-based paradigm, this approach involves learning the pointwise relationship between $\bm x$ and $\bm u(\bm x, t)$ rather than reconstructing the entire field simultaneously for a fixed grid.
While exact solutions $\bm u(\bm x,t;\bm\mu)$ are generally unavailable, discrete solutions (such as \gls{fe} approximations) are often defined outside the mesh points via interpolation~\cite{quarteroni_numerical_2017}.
This allows one to interpret mesh points simply as collocation points to train the architecture, thereby providing a theoretical justification for learning continuous solutions.
Hereafter, we explicitly include the spatial argument\footnote{When $\bm x$ is omitted, $\bm u_h(t;\bm\mu)$ refers to the discrete values in $\R^{N_h \times d_{\bm u}}$ at the grid nodes.}, denoting the continuous evaluation as $\bm u_h(\bm x,t;\bm\mu)$.
This continuous formulation offers significant flexibility, enabling the batching of collocation points during training to reduce computational costs, for instance by using active learning or adaptive strategies~\cite{regazzoni_learning_2024}.

When dealing with time-dependent problems, a common approach is to consider a \gls{nef} characterized by a set of weights that evolve over time.
Several methodologies have been proposed for this purpose, such as sequentially optimizing the parameters at discrete time steps~\cite{chen_implicit_2023}, using hypernetworks to map temporal coordinates directly to the network parameters~\cite{pan_neural_2023}, conditioning the decoder on discrete time-varying latent codes~\cite{chen_crom_2023}, or leveraging vectorized attention mechanisms~\cite{hagnberger_vectorized_2024}.
A further approach entails writing an \gls{ode} that governs all the weights of the \gls{nef}~\cite{bruna_neural_2024, akhare_implicit_2025, rosso_weight-parameterization_2025, wan_evolve_2023, du_evolutional_2021}, thereby defining a family of \glspl{nef} $\{\mathcal{NN}_t:\Omega\to\R^{d_{\bm u}}\}_{t\in I}$.
While this \gls{ode}-based strategy can be formally interpreted as a form of time-conditioning in the sense of Subsection~\ref{subsec:conditioning}, evolving the entire weight space yields a high-dimensional differential problem, making numerical time-stepping computationally challenging.

To circumvent this computational bottleneck and be consistent with the \gls{mor} setting, instead of writing an \gls{ode} on the full set of weights, here we identify a low-dimensional state that is subsequently used to condition the \gls{nef}.
This reduced set of parameters can naturally be interpreted as a latent state $\bm z(t;\bm\mu)$, often mirroring the underlying physical or dynamical properties of the governing system \cite{champion_data-driven_2019, linot_deep_2020}.

Formally, in the case of meshless \glspl{ldm}, we assume the existence of an encoder $\enh:\R^{N_h\times d_{\bm u}}\to\R^n$, a dynamics function $\bm f_n : I\times \R^n\times \R^{d_{\bm\mu}}\to\R^n$, and a decoder $\bm\Psi:\R^d\times\R^n\to\R^{d_{\bm u}}$ such that, for all $(\bm x, t;\bm\mu)\in\Omega\times I\times\mP$,
\begin{equation}\label{eq:meshless_models}
    \bm u_h(\bm x, t;\bm\mu)\approx\bm\Psi\left(\bm x,\, \enh(\bm u_{0,h}(\bm \mu)) +\int_{t_0}^t \bm f_n(\tau, \bm z(\tau;\bm\mu);\bm\mu(\tau))d\tau\right)\in\R^{d_{\bm u}}.
\end{equation}
Compared to mesh-based models in Equation~\eqref{eq:LDM_meshbased1}, the decoder here does not map $\bm z(t;\bm\mu)$ to the values on a discrete grid; instead, it returns a spatial field conditioned on the latent state.
While this shifts the output from a discrete vector to a spatial mapping, the architecture remains straightforward to train using standard data from numerical \gls{pde} solvers, which provide pointwise values at the mesh nodes used to perform simulations.

As in the case of mesh-based models, suitable approximations of $\enh,\bm\Psi$, and $\bm f_n $ must be learned from data.
The encoder $\enh$ is typically approximated by a feed-forward \gls{nn} $\ethnh$, while the decoder is approximated by means of a \gls{nef} that is conditioned on the latent state, i.e., $\NN{dec}(\cdot\,|\,\bm z(t;\bm\mu)):\R^d\to\R^{d_{\bm u}}$, where the spatial input $\bm h^{(0)}=\bm{x}$ is processed according to Equation~\eqref{eq:conditioning} with $\bm a=\bm z(t;\bm\mu)$.
We highlight that the operator that maps $\bm z(t;\bm\mu)$ to $\NN{dec}(\,\cdot\,|\,\bm z(t;\bm\mu))\in L^p(\Omega)$, for $p\in[1,+\infty]$, can be viewed within the framework of \glspl{no} \cite{huang_operator_2025}.

Finally, the function $\bm f_n$ is approximated by a \gls{node}, denoted as $\NN{dyn}$, which outputs the time derivative $\dot{\bm{s}}(t;\bm\mu)$ used to evolve the approximate latent trajectory $\bm s(t;\bm\mu)$.
Importantly, since $\NN{dyn}$ is a \gls{nn} conditioned on $t$ and $\bm\mu(t)$, more expressive conditioning mechanisms can be employed instead of simple input concatenation, analogously to the conditioning used for the decoder.

Summarizing, a learnable meshless \gls{ldm} can be formulated as follows.
Given the value of a signal $\bm\mu(t)\in\R^{d_{\bm\mu}}$ and a latent state $\bm{s}(t;\bm\mu)\in\R^n$ at time $t\in I=[t_0,T]\subset\R^+$, $\NN{dyn}$ returns its time derivative $\dot{\bm{s}}(t;\bm\mu)$ at time $t$.
Conversely, a second sub-network $\NN{dec}:\R^d\times\R^n\to\R^{d_{\bm u}}$ acts as a learned decoder, taking as input the value of $\bm x\in\Omega$, as well as the latent state $\bm s(t;\bm\mu)$ at time $t$, and returning the reconstructed field.
Ultimately, the complete high-fidelity trajectory is approximated by integrating the following initial value problem:
\begin{equation}\label{eq:latent_ODE_2}
    \begin{cases}
        \dot{\bm{s}}(t; {\bm \mu})=\NN{dyn}\bigl(\bm{s}(t; {\bm \mu})\,\big |\,(t,{\bm{\mu}}(t))\bigr),& t\in(t_0,T]\\
         \bm{s}(t_0; {\bm \mu})=\ethnh(\bm u_{0, h}(\bm\mu)),\\
         \bm{u}_\mathrm{sim}(\bm x, {t};\bm \mu)=\NN{dec}\bigl(\bm x\,\big |\, \bm{s}( t; \bm \mu)\bigr),& t\in(t_0,T].
    \end{cases}
\end{equation}
To discretize Equation~\eqref{eq:latent_ODE_2} and advance in time, we use the explicit Euler method for simplicity, although any kind of differentiable time-stepping scheme for \glspl{ode} could be used~\cite{farenga_latent_2025}.

\begin{remark}\label{rmk:signature}
Consistently with other recently proposed methodologies~\cite{regazzoni_learning_2024, tomada_latent_2026}, it is possible to include in the decoder a dependence on $(t,\bm\mu(t))$ as well, e.g., by concatenating it to the latent state.
Similarly, the dependence on $t$ of $\NN{dyn}$ can be dropped when modeling autonomous systems.
We did not consider this case in the theoretical exposition to simplify the formalism.
\end{remark}

To demonstrate the necessity of a common formal framework based on the conditioning taxonomy established in Section~\ref{subsec:conditioning}, as an example, we recast \gls{ldnet} within our framework (assuming here that times are not scaled by any hyperparameter).

\begin{remark}[LDNets as meshless Latent Dynamics Models]\label{rmk:ldnet}
The \gls{ldnet} architecture is a specific instance of a learnable meshless \gls{ldm}, characterized by the following architectural constraints:
\begin{enumerate}[label=(\roman*)]
  \item The \gls{nef} decoder $\NN{dec}$ and the dynamics network $\NN{dyn}$ employ weak conditioning exclusively, and only in their respective first layers (i.e., only concatenation of the condition is used);
    \item No encoder is employed and $\bm s(t_0;\bm\mu) =\bm0_{\R^{n}},\,\forall\bm\mu\in\mP$.
\end{enumerate}
\end{remark}

In the case of fixed initial conditions, the lack of an encoder as in \gls{ldnet} allows one to obtain an architecture that is fully independent of the discretization parameter $N_h$.
We remark that, following recent approaches such as auto-decoding~\cite{pizzamiglio_learning_2026, serrano_operator_2023, yin_continuous_2023}, it is possible to keep the parameter count independent of $N_h$ while allowing for varying initial conditions.

By framing meshless models through this rigorous conditioning taxonomy, we aim at providing a principled architectural design choice, guided by the expressivity--stability trade-off, rather than fully relying on empirical heuristics.

%% file: sections/analysis.tex
\section{Theoretical comparison of meshless and mesh-based models}\label{sec:analysis}
This section analyzes the theoretical properties of meshless and mesh-based models, focusing on their approximation capabilities and potential limitations.

To ensure our comparison is methodologically grounded, we build upon existing guarantees for mesh-based architectures~\cite{farenga_latent_2025} by first establishing a \gls{uat} for time-continuous meshless \glspl{ldm}.
Additionally, we derive an off-grid error bound to formalize the super-resolution capabilities of meshless models on unseen spatial discretizations.

Subsequently, we investigate how different conditioning schemes propagate perturbations of the latent state to the decoded solution, and compare meshless and mesh-based \glspl{ldm} in the continuous limit as $N_h\to\infty$.
This analysis reveals a condition for the stability of mesh-based models that is grounded in high-dimensional probability.
We further discuss quantitative bounds on the magnitude of latent trajectories and their relationship with the Lipschitz constant. 

Finally, we review typical training pathologies associated with \glspl{nef}, such as spectral bias~\cite{xie_neural_2022}, summarize the architectural trade-offs, and provide a comparison with existing methodologies.

\subsection{Universal Approximation Theorem for meshless models}\label{subsec:UAT}
In our analysis, we focus on \gls{ldnet} as a prototypical architecture to prove a \gls{uat} for meshless \glspl{ldm}.
Leveraging Remark~\ref{rmk:ldnet}, we formally state that there exists a learnable \gls{ldnet} (and thus, a learnable meshless \gls{ldm}) satisfying the desired approximation properties.
Contrary to the original \gls{ldnet} methodology, we assume the existence of a learnable encoder, as this addition is necessary to tackle problems characterized by varying initial conditions.
For simplicity, we assume that $d_{\bm u}=1$ (scalar functions), and that $\mP$ is a compact subset of $\R^{d_{\bm\mu}}$ (constant signals).
As previously remarked, if the encoder is not used, the resulting architecture has a number of parameters that is independent of $N_h$.

Following~\cite{farenga_latent_2025}, which establishes an analogous theorem for mesh-based models, we now detail the ingredients specific to the meshless setting, starting from the core assumption on the embedding of the dynamics~\cite{brivio_error_2024}.

\begin{definition}[Perfect embedding assumption]\label{def:perfect_embedding}
Let $I=[t_0,T]$ and $D_n=(t_0,T]\times\R^n\times\mP$; assume that the initial condition $\bm\mu\mapsto\bm u_{0,h}(\bm\mu)=\bm u_h(t_0;\bm\mu)$ of~\eqref{eq:discretized_abstract_PDE} is continuous on $\mP$ and that $u_h(\,\cdot\,, t;\bm\mu)$ is defined on $\overline\Omega$.
We say that a \emph{perfect embedding assumption} is satisfied if there exists $n^\ast\in\mathbb N$ such that, for all $n\geq n^\ast$, there exist
\begin{enumerate}
\item a latent dynamics function $\bm f_n:D_n\to\R^n$ and constants $\ell_t,\ell_{\bm z},\ell_{\bm\mu}\geq0$ such that
\begin{equation*}
\|\bm f_n(t,\bm z;\bm\mu)-\bm f_n(t',\bm z';\bm\mu')\|_2\leq\ell_t|t-t'|+\ell_{\bm z}\|\bm z-\bm z'\|_2+\ell_{\bm\mu}\|\bm\mu-\bm\mu'\|_2\qquad\forall(t,\bm z,\bm\mu),(t',\bm z',\bm\mu')\in D_n,
\end{equation*}
\item an encoder $\enh:\bm u_{0,h}(\mP)\to\R^n$ that is Lipschitz continuous,
\item a decoder $\Psi:\overline\Omega\times\R^n\to\R$ that is Lipschitz continuous on $\overline\Omega\times K$ for every compact set $K\subset\R^n$,
\end{enumerate}
satisfying $u_h(\bm x,t;\bm\mu)=\Psi(\bm x,\bm z(t;\bm\mu))$ for all $(\bm x,t,\bm\mu)\in\overline\Omega\times I\times\mP$, with $\bm z:I\times\mP\to\R^n$ the latent trajectory such that $\bm z(\,\cdot\,;\bm\mu)\in C^1(I;\R^n)$ is the solution of~\eqref{eq:latent_dynamics}.
We further assume that $\bm z$ is Lipschitz continuous.
\end{definition}

\begin{remark}[Perfect embedding in the meshless setting]\label{rmk:relu}
In the mesh-based setting~\cite{farenga_latent_2025}, the perfect embedding assumption involves the approximability of a discrete state vector $\bm u_h(t;\bm\mu)\in\R^{N_h}$ by means of a low-dimensional state $\bm z(t;\bm\mu)$.
By contrast, in the present meshless framework we aim to approximate a continuous map $ u_h(\,\cdot\,,t;\bm\mu):\Omega\to\R$.
Although the hypothesis in Definition~\ref{def:perfect_embedding} might seem different from its discrete counterpart, it can nonetheless be interpreted as a hypothesis on a finite set of values in terms of the spatial variable.
Indeed, in various full-order methodologies, such as the \gls{fe} method, the solution is uniquely determined by its values at the \glspl{dof}, with the continuous representation obtained by combining the corresponding basis functions.
In particular, for linear \gls{fe} discretizations, since piecewise-linear functions can be represented exactly by sufficiently deep \glspl{nn} with \gls{relu} activation~\cite{he_relu_2020, hao_sparsely_2026}, the perfect reconstruction of the continuous function in Definition~\ref{def:perfect_embedding} is  equivalent to reconstructing its values at the \glspl{dof}.
The network can then exactly represent the corresponding piecewise-linear basis functions and evaluate the reconstructed solution at arbitrary points in the domain.
In this sense, consistently with full-order methods and various \gls{mor} techniques, Equation~\eqref{eq:meshless_models} still acts as a reduction of the set of \glspl{ode} on the grid points introduced in~\eqref{eq:discretized_abstract_PDE}, establishing a conceptual bridge between operator learning and traditional \gls{mor}.
\end{remark}

\begin{theorem}[Universal Approximation Theorem for meshless \glspl{ldm}]\label{theo:uat}
Assume the perfect embedding assumption of Definition~\ref{def:perfect_embedding} and let $n\geq n^*$.
Let $\sigma$ be continuous and satisfy the assumptions of~\cite[Proposition~4.8]{guhring_approximation_2021} (e.g., $\tanh$ or \gls{elu}), and consider the uniform grids $\mathcal T_{\Delta t}$ and a convergent quadrature formula with weights and nodes $\{(\omega_j,\tau_j)\}_{j=0}^{N_{\Delta t}}$ as in Definition~\ref{def:quadrature}.
Then, for each $\varepsilon>0$, there exist:
\begin{itemize}
\item a decoder $\NN{dec}:\R^d\times\R^n\to\R$ having at most $O(\varepsilon^{-(d+n)})$ active weights;
\item an encoder $\ethnh:\R^{N_h}\to\R^n$ having at most $O(n\varepsilon^{-N_h})$ active weights;
\item a dynamics network $\NN{dyn}:\R\times\R^{d_{\bm\mu}}\to\R^n$ having at most $O(n\varepsilon^{-(d_{\bm\mu}+1)})$ active weights;
\item a threshold $\Delta t^*>0$, depending on $\varepsilon$, on $\NN{dyn}$ and on the quadrature formula;
\end{itemize}
such that
\begin{equation}\label{eq:uat_bound}
\max_{k\in\{1,\dots,N_{\Delta t}\}}\bigl|u_h(\bm x,t_k;\bm\mu)-u^k_{\mathrm{sim}}(\bm x;\bm\mu)\bigr|\leq\varepsilon\qquad\forall\bm x\in\overline\Omega,\ \forall\bm\mu\in\mP,\ \forall\Delta t=\tfrac{T-t_0}{N_{\Delta t}}\leq\Delta t^*,
\end{equation}
where
\begin{equation*}
u^k_{\mathrm{sim}}(\bm x;\bm\mu)=\NN{dec}\bigl(\bm x\,\big|\,\bm s^k(\bm\mu)\bigr),\qquad
\bm s^k(\bm\mu)=\ethnh(\bm u_{0,h}(\bm\mu))+\Delta t\sum_{j=0}^{k}\omega_j\NN{dyn}(\tau_j;\bm\mu).
\end{equation*}
\end{theorem}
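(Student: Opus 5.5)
The plan is to split the total error with the triangle inequality into a decoder approximation error and a latent trajectory error, which the Lipschitz continuity of $\Psi$ then transfers to the decoded field. Write $\bm z_k=\bm z(t_k;\bm\mu)$ for the exact latent state of Definition~\ref{def:perfect_embedding}. Since $u_h(\bm x,t_k;\bm\mu)=\Psi(\bm x,\bm z_k)$, we have
\begin{equation*}
|u_h(\bm x,t_k;\bm\mu)-u^k_{\mathrm{sim}}(\bm x;\bm\mu)|\leq|\Psi(\bm x,\bm z_k)-\Psi(\bm x,\bm s^k)|+|\Psi(\bm x,\bm s^k)-\NN{dec}(\bm x\,|\,\bm s^k)|.
\end{equation*}
First I fix a compact set. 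The map $\bm z$ is Lipschitz on the compact set $I\times\mP$, so its image is compact. Let $K$ be the closed $1$-neighbourhood of that image, and let $L_\Psi$ be the Lipschitz constant of $\Psi$ on $\overline\Omega\times K$. Every approximation below is built on $\overline\Omega\times K$ or on compact sets of inputs, and I will force $\bm s^k\in K$ by keeping $\|\bm s^k-\bm z_k\|_2\leq\min\{1,\varepsilon/(2L_\Psi)\}$.

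Decoder: $\Psi$ is Lipschitz, hence $W^{1,\infty}$, on the compact set $\overline\Omega\times K\subset\R^{d+n}$. By \cite[Proposition~4.8]{guhring_approximation_2021} there is a network with input $(\bm x\oplus\bm s)$ and $O(\varepsilon^{-(d+n)})$ weights that approximates $\Psi$ uniformly within $\varepsilon/2$. This network is weakly conditioned in its first layer (concatenation, as in Remark~\ref{rmk:ldnet}), so it is an admissible $\NN{dec}$. This controls the second term on $K$.

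Latent trajectory: since $\bm\mu$ is constant in time, $\bm z_k=\bm z_0+\int_{t_0}^{t_k}\bm g(\tau;\bm\mu)\,d\tau$ with $\bm g(\tau;\bm\mu):=\bm f_n(\tau,\bm z(\tau;\bm\mu);\bm\mu)$. The function $\bm g$ is Lipschitz on $I\times\mP$, because $\bm f_n$ and $\bm z$ are. I split $\bm s^k-\bm z_k$ into three pieces:
\begin{enumerate}[label=(\alph*)]
\item the encoder error $\ethnh(\bm u_{0,h}(\bm\mu))-\enh(\bm u_{0,h}(\bm\mu))$. The set $\bm u_{0,h}(\mP)\subset\R^{N_h}$ is compact, and the Lipschitz encoder can be extended to a Lipschitz map on a box. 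Componentwise approximation via \cite{guhring_approximation_2021} then gives $O(n\varepsilon^{-N_h})$ weights.
\item the quadrature-of-network error $\Delta t\sum_j\omega_j(\NN{dyn}-\bm g)(\tau_j;\bm\mu)$. Approximate $\bm g$ componentwise on $I\times\mP$ uniformly within $\eta$, using $O(n\varepsilon^{-(d_{\bm\mu}+1)})$ weights. This piece is then bounded by $\eta\,\Delta t\sum_j|\omega_j|\leq C_Q(T-t_0)\eta$, using the stability of the quadrature in Definition~\ref{def:quadrature}.
\item the pure quadrature error $\Delta t\sum_j\omega_j\bm g(\tau_j)-\int_{t_0}^{t_k}\bm g$. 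This tends to $0$ uniformly in $k$ and $\bm\mu$ as $\Delta t\to0$, by convergence of the quadrature together with the uniform continuity (Lipschitz) of $\bm g$.
\end{enumerate}
To keep piece (c) small, I choose $\Delta t^*$ so that it is below $\varepsilon/(6L_\Psi)$ for all $\Delta t\leq\Delta t^*$. The dependence of $\Delta t^*$ on $\NN{dyn}$ stated in the theorem would arise if one instead applies the quadrature convergence to $\NN{dyn}$ directly, which is smooth for $\tanh$/ELU. Either route works, and I would follow the statement by using the network.

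The main obstacle is making the quadrature bound uniform in $k$ and $\bm\mu$ and well defined for the index convention $\sum_{j=0}^k$ of Definition~\ref{def:quadrature}. This needs uniform boundedness of $\Delta t\sum_j|\omega_j|$ and a modulus-of-continuity estimate for the Riemann-type sums. Given those, choose the encoder and dynamics tolerances so that pieces (a) and (b) each contribute at most $\varepsilon/(6L_\Psi)$. Then $\|\bm s^k-\bm z_k\|_2\leq\varepsilon/(2L_\Psi)$, so $\bm s^k\in K$ and the first term is at most $\varepsilon/2$. Summing the two terms gives \eqref{eq:uat_bound}.
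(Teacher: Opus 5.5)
Your argument works and has the same skeleton as the paper's proof. Both split the error at $\Psi(\bm x,\bm s^k)$, use a unit neighbourhood $K$ of the latent manifold, approximate the encoder, $\bm g=\bm F_n$ and $\Psi$ componentwise, and use quadrature convergence to fix $\Delta t^*$. The real difference is how you split the latent error.

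The paper writes $\bm z(t_k;\bm\mu)-\bm s^k=(\bm z(t_k;\bm\mu)-\bm s(t_k;\bm\mu))+(\bm s(t_k;\bm\mu)-\bm s^k)$, with $\bm s(t;\bm\mu)=\ethnh(\bm u_{0,h}(\bm\mu))+\int_{t_0}^t\NN{dyn}(\tau;\bm\mu)\,d\tau$. The first term is bounded at the continuous level by $\varepsilon/8+(T-t_0)\sup\|\bm F_n-\NN{dyn}\|$, using no property of the quadrature. The second is bounded by applying Definition~\ref{def:quadrature} to the single continuous function $\NN{dyn}$, which is why the paper's $\Delta t^*$ depends on the network.

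Your pieces (b)--(c) apply the quadrature instead to the approximation error and to $\bm g$. Piece (c) follows directly from Definition~\ref{def:quadrature}, which is already uniform in $k$ and $\bm\mu$, so no modulus-of-continuity estimate is needed. This route gives a $\Delta t^*$ independent of $\NN{dyn}$, slightly stronger than stated.

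The price is piece (b). A uniform bound $\sup_{N_{\Delta t},k}\sup_{\|v\|_\infty\leq1}\bigl|\Delta t\sum_{j=0}^k\omega_jv(\tau_j)\bigr|\leq C_Q$ is \emph{not} part of Definition~\ref{def:quadrature}, so you cannot simply invoke ``stability''. It can be recovered, though. Convergence for every continuous $g$ makes the functionals $v\mapsto\Delta t\sum_{j=0}^k\omega_jv(\tau_j)$ pointwise bounded on $C(I)$. The uniform boundedness principle then bounds their operator norms uniformly, which is exactly what (b) needs. Note that (b) needs the operator norm, not $\Delta t\sum_j|\omega_j|$, which can be larger if nodes repeat. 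So the ``main obstacle'' you flag is closed either by Banach--Steinhaus or, more economically, by switching to the paper's split, where it never arises.

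Two small repairs:
\begin{itemize}
\item $\Psi$ on $\overline\Omega\times K$ and $\bm g$ on $I\times\mP$ are not boxes in general. As you did for the encoder, they must be Lipschitz-extended to boxes and rescaled to the unit cube before invoking \cite{guhring_approximation_2021}.
\item Each of (a)--(c) should be bounded by $\tfrac13\min\{1,\varepsilon/(2L_\Psi)\}$, measured in the norm defining $K$. This guarantees $\bm s^k\in K$ also when $\varepsilon>2L_\Psi$.
\end{itemize}
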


The core principle underlying the proof of Theorem~\ref{theo:uat} (the proof of which is in Appendix~\ref{appendix:proofs}) is that, provided the perfect embedding assumptions hold, the problem can be recast to leverage classical \glspl{uat}.
Specifically, the meshless and mesh-based target maps, respectively the \gls{nef} decoder $\Psi: \overline\Omega\times \R^n \to \R$  and the discrete spatial state mapping $\mathcal{D}^{N_h}:\R^n\to \R^{N_h}$, can be formulated as functions from $\R^k$ to $\R^j$ (with $k=n+d$ and $j=1$ in the meshless context, and $k=n$ and $j=N_h$ in the standard mesh-based case).

This dimensional shift introduces a trade-off in the asymptotic scaling of the network bounds; indeed, as highlighted in Theorem~\ref{theo:uat}, the meshless decoder requires at most $O(\varepsilon^{-(d+n)})$ active weights.
While this exhibits a worse asymptotic dependence on the tolerance $\varepsilon$ (due to the exponent $d+n$) compared to the $O(N_h\varepsilon^{-n})$ bound of its mesh-based counterpart, the bound for meshless models does not depend on $N_h$, which can be large, while $n$ is kept small in practice.

Furthermore, for an extended discussion detailing the design choices and the specific functional signature of the dynamics network $\NN{dyn}$, we refer interested readers to~\cite{tomada_latent_2026}.
Finally, complementary to our analysis, the approximation error can also be characterized through the time-continuity properties of the model, as originally proposed in~\cite{farenga_latent_2025}.

\begin{remark}[UAT for the $L^p(\Omega)$ norm]\label{rmk:l2_uat}
Notice that, under the hypotheses of Theorem~\ref{theo:uat}, it is immediate to recover a bound in the $L^p(\Omega)$ norm for all $p\in[1,\infty]$.
Indeed, also for the case $p<\infty$, to obtain an $L^p(\Omega)$ error at most $\varepsilon$, it suffices to define\footnote{We assume that all the involved functions, such as the \glspl{nn} and the discrete solution $u_h$, are bounded on $\Omega$.} 
\begin{equation*}
  f(\bm x) =  \sup_{k\in\{1,\dots,N_{\Delta t}\}}\|u_h(\bm x, t_k;\cdot)-u_{\mathrm{sim}}^k(\bm x;\cdot)\|_{L^{\infty}(\mP)}, \qquad \text{and} \qquad \tilde{\varepsilon}=|\Omega|^{-\frac{1}{p}}\varepsilon 
\end{equation*}
so that, applying Theorem~\ref{theo:uat} with tolerance $\tilde{\varepsilon}$, the bound easily follows from
\begin{equation*}
    \|f \|_{L^p(\Omega)}\leq{|\Omega|^{\frac{1}{p}}}\|f\|_{L^\infty(\Omega)}\leq |\Omega|^{\frac{1}{p}}\tilde{\varepsilon}=\varepsilon.
\end{equation*}
\end{remark}

\begin{remark}[Off-grid error bounds and super-resolution]\label{rmk:off_grid}
The continuous nature of meshless decoders allows for direct evaluation at any spatial coordinate $\bm x \in \Omega$, enabling zero-shot super-resolution.
However, since these models are inherently trained on discrete data, direct error control is restricted to the grid points, in contrast to the continuous bounds established in Equation~\eqref{eq:uat_bound}.
To study the off-grid error, assume that the model is trained on a spatial grid $\mathcal M_h$ with $N_h$ nodes $\{\bm x_i\}_{i=1}^{N_h}$ and denote by $h_{\max}=\sup_{\bm y\in\Omega}\min_{i\in\{1,\dots,N_h\}}\|\bm y-\bm x_i\|_2$ the maximum distance of any point of $\Omega$ to its closest grid point.
Then, under suitable Lipschitz-continuity assumptions detailed in Appendix~\ref{appendix:proofs}, the pointwise error at any arbitrary coordinate $\bm x\in\Omega$ can be bounded as
\begin{equation*}
    |u_h(\bm x,t;\bm\mu) - u_{\mathrm{sim}}(\bm x,t;\bm\mu)| \leq \varepsilon_{\mathrm{grid}} + C h_{\max},
\end{equation*}
where $C\geq0$ is a constant independent of $h_{\max}$ and $\varepsilon_\text{grid}$ denotes the maximum error at the grid points. 
This bound provides a rigorous theoretical foundation for evaluating meshless methods on arbitrary spatial discretizations unseen during training.
\end{remark}

\subsection{Stability of different types of conditioning}\label{subsec:stability_conditioning}
We now turn to the choice of the conditioning type in the setting of \glspl{ldm}.
Indeed, we emphasize that backpropagating through fully integrated trajectories imposes a significant computational bottleneck, and thus the use of \gls{film} alleviates the computational burden when compared to pure conditioning (i.e., hypernetworks).

Beyond computational constraints, \glspl{ldm} compute temporal evolution recursively via time-stepping, so that any small integration error introduced at a given time step accumulates over the trajectory.
As a result, the Lipschitz constant of the decoder needs to be kept small in order to avoid error amplification.
We now observe a fundamental trade-off: highly expressive conditioning mechanisms can potentially degrade the stability of the output with respect to perturbations in the latent state.

Assuming for mathematical simplicity that the decoder consists of a single layer, under the assumptions of Proposition~\ref{prop:lipschitz}, Equation~\eqref{eq:stability} translates in this context to the existence of constants $C_1,C_2\geq 0$ such that
\begin{equation}\label{eq:pointwise_lipschitz_conditioning}
        \|\NN{dec}(\bm x\,|\,\bm s+\deltas)-\NN{dec}(\bm x\,|\,\bm s)\|_p\leq (C_1\|\bm x\|_p+C_2)\|\deltas\|_p,\quad \forall \deltas\in\R^n,
\end{equation}
where $\deltas$ represents a perturbation of the latent state at time $t$.
Furthermore, this bound naturally extends to the $L^p(\Omega)$ norm:
\begin{equation}\label{eq:lipschitz_conditioning}
    \|\NN{dec}(\,\cdot\,\,|\,\bm s+\deltas)-\NN{dec}(\,\cdot\,|\,\bm s)\|_{L^p(\Omega)}\leq |\Omega|^{\frac{1}{p}}(C_1r+C_2)\|\deltas\|_p,\quad \forall \deltas\in\R^n,
\end{equation}
in which $r = \sup_{\bm x\in\Omega}\|\bm x\|_p<+\infty$ is the radius of the smallest ball centered at $0$ that contains $\Omega$.

In the previous expressions, $C_1$ (which is always equal to $0$ for weak conditioning) can become large for strong and pure conditioning, making the network sensitive to small variations in the latent state.
Moreover, when the latter conditioning mechanisms are used, the bound on the magnitude of the propagation grows affinely with the norm of the query point $\bm x\in\Omega$, although this factor could be reduced by means of a suitable scaling of the coordinates given the boundedness of $\Omega$.

In many \gls{mor} and operator learning tasks, the conditioning variables are known a priori, and are thus not affected by perturbations.
However, in \glspl{ldm}, integration errors in the latent state evolution affect the temporal extrapolation even if the underlying dynamics is identified correctly.
As a result, while less powerful, weak conditioning could be preferred in some situations to avoid error amplification during decoding.
Additionally, the complementary strategy of conditioning multiple layers can help in recovering expressivity, but it also amplifies the bound on the global Lipschitz constant in Equation~\eqref{eq:lipschitz_conditioning}.

Stronger forms of conditioning can still be employed~\cite{wan_evolve_2023}.
For instance, \gls{nif}~\cite{pan_neural_2023} bypasses time-stepping altogether through pure conditioning, mapping time directly to a latent state via a hypernetwork.
While this regression-based approach eliminates compounding integration errors, long-term extrapolation instead depends on how well the learned mapping captures the temporal evolution of the system.
Thus, avoiding error accumulation does not by itself guarantee temporal consistency or stable long-term extrapolation.

Ultimately, this theoretical result suggests a trade-off between decoder expressivity and error propagation.
In latent integration settings, the conditioning strategy must carefully balance representational capacity with robustness to integration errors.
This analysis provides insights into the architectural choice in \glspl{ldnet} to rely on a single weakly conditioned layer whenever the base decoder is sufficiently expressive.
We investigate this trade-off empirically in Section~\ref{sec:results}, showing how strong conditioning increases sensitivity to latent perturbations during extrapolation, and assessing whether this translates to a loss in predictive performance.

\subsection{Stability in the continuous limit}\label{subsec:stability_ML_MB}
Beyond the stability implications of the different conditioning techniques, we are interested in the theoretical comparison of such properties between meshless and mesh-based architectures.
In particular, for both model families, we bound the Lipschitz constant of the decoder with respect to the latent state, and discuss the behavior of the continuous limit $N_h\to\infty$ for highly refined meshes.

Importantly, we consider the $L^2(\Omega)$ setting for the continuous solution; indeed, the induced norm can also be utilized during training via Monte Carlo integration or more sophisticated quadrature rules~\cite{berner_principled_2026}.
Our goal is to demonstrate that, for $N_h\gg1$, the underlying problem may lose its $L^2(\Omega)$-stability.
In the following discussion, discrete spaces are endowed with the Euclidean norm and the corresponding induced matrix spectral norm, given by $\left\|\bm A\right\|_2=\sqrt{\rho\left(\bm A^T\bm A\right)}$, $\forall\bm A\in\R^{d_\mathrm{out}\times d_\mathrm{in}}$.
To clarify these claims, we assume for simplicity that $d_{\bm u}=1$ (scalar functions) and that our \glspl{nn} consist of a single linear layer followed by an activation function.
We restrict our meshless analysis to weak conditioning for ease of computation.

Specifically, we consider a family of meshes $\{\mathcal{M}_h\}_{h>0}$ associated with a discretization parameter $h>0$ and denote by $N_h$ the corresponding number of nodes.
For each $N_h\geq1$, we consider a different mesh-based decoder $\decnh:\R^n\to\R^{N_h}$ and we denote by $\bm W^{N_h}_{\bm\theta}\in\R^{N_h\times n}$ its weight matrix.
Since a meshless decoder does not depend on the mesh, we consider a single model for all $h>0$ and denote its bias by $\bm b(\bm s(t;\bm\mu))=\bm W_{\bm s}\bm s(t;\bm\mu)$ for some matrix $\bm W_{\bm s}$.
If a separate meshless model is trained for each mesh, one obtains a family $\{\bm W_{\bm s}^{(h)}\}_{h>0}$ and Equation~\eqref{eq:lipsch_const_ml} in the next proposition holds with $\bm W_{\bm s}^{(h)}$.

\begin{proposition}[$L^2(\Omega)$ Lipschitz bounds for the decoder mappings]\label{prop:stability_L2}
Let us assume that:
\begin{itemize}
    \item $\{\mathcal{M}_h\}_{h>0}$ is a shape-regular and quasi-uniform family of meshes, for which we denote by $\{\bm x_i\}_{i=1}^{N_h}$ the nodes of each mesh $\mathcal{M}_h$, for $h>0$;
    \item $\sigma:\R\to\R$ is a Lipschitz-continuous, nonlinear function with Lipschitz constant $C_\sigma$;
    \item $\NN{dec}:\R^d\times\R^n\to\R$ is a one-layer weakly conditioned \gls{nef};
    \item for each $h>0$, $\mathcal{D}_{\bm\theta}^{N_h}:\R^n\to\R^{N_h}$ is a mesh-based, one-layer \gls{nn}, for a grid with $N_h$ nodes.
\end{itemize} 
Denote by $\tilde{\mathcal{D}}_{\bm\theta}^{N_h}:\Omega\times\R^n\to\R$ the continuous \gls{fe} interpolant of $\mathcal{D}^{N_h}_{\bm\theta}$ on the mesh nodes, defined as
\begin{equation*}
    \tilde{\mathcal{D}}^{N_h}_{\bm\theta}(\bm x, \bm s) = \sum_{i=1}^{N_h}\mathcal{D}^{N_h}_{\bm\theta}(\bm s)_i\varphi_i(\bm x),
\end{equation*}
where $\{\varphi_i\}_{i=1}^{N_h}$ is the \gls{fe} basis of degree $1$ associated with the mesh $\mathcal{M}_h$.
Then:
\begin{enumerate}
    \item\label{item:stability_1} For meshless models, it holds that
    \begin{equation}\label{eq:lipsch_const_ml}
     \|\NN{dec}(\,\cdot\,|\,\bm s+\deltas)-\NN{dec}(\,\cdot\,|\,\bm s)\|_{L^2(\Omega)}\leq\sqrt{|\Omega|}C_\sigma\|\bm W_{\bm s}\|_2\|\deltas\|_2,\quad\forall\deltas\in\R^n
\end{equation}
and denoting by $L_{\NN{dec}}$ the Lipschitz constant of the operator $\R^n\ni \bm s\mapsto \NN{dec}(\,\cdot\,|\,\bm s)\in L^2(\Omega)$, we have that
\begin{equation*}
    L_{\NN{dec}}\leq \sqrt{|\Omega|}C_\sigma\|\bm W_{\bm s}\|_2.
\end{equation*}
\item\label{item:stability_2} For mesh-based models, there exists a constant $C_M>0$ independent of $h$ such that
\begin{equation}\label{eq:lipsch_const_mb}
 \left \|\tilde{\mathcal{D}}^{N_h}_{\bm\theta}(\cdot\,,\,\bm s)-\tilde{\mathcal{D}}^{N_h}_{\bm\theta}(\cdot\,,\,\bm s+\deltas)\right\|_{L^2(\Omega)}\leq h^{\frac{d}{2}}C_MC_\sigma \left\|\bm W^{N_h}_{\bm\theta}\right\|_2\|\deltas\|_2,\quad\forall\deltas\in\R^n.
\end{equation}
Consequently, denoting by $L_{\tilde{\mathcal D}^{N_h}_{\bm\theta}}$ the Lipschitz constant of the operator $\R^n\ni \bm s\mapsto \tilde{\mathcal D}_{\bm\theta}^{N_h}(\,\cdot\,,\,\bm s)\in L^2(\Omega)$, we have that
\begin{equation*}
L_{\tilde{\mathcal D}^{N_h}_{\bm\theta}}\le h^{\frac d2}C_MC_\sigma \left\|\bm W^{N_h}_{\bm\theta}\right\|_2=O\left(N_h^{-\frac12}\left\|\bm W^{N_h}_{\bm\theta}\right\|_2\right).
\end{equation*}
\end{enumerate}

\end{proposition}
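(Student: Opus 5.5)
For part~\eqref{item:stability_1}, the plan is to reduce to the pointwise bound of Proposition~\ref{prop:lipschitz} and integrate over $\Omega$. With weak conditioning, the one-layer decoder reads $\NN{dec}(\bm x\,|\,\bm s)=\sigma(\bm W_{\bm h}\bm x+\bm W_{\bm s}\bm s)$. Proposition~\ref{prop:lipschitz} gives $C_{\bm W}=0$ and $C_{\bm b}\le\|\bm W_{\bm s}\|_2$. Hence, for every fixed $\bm x\in\Omega$,
\[
|\NN{dec}(\bm x\,|\,\bm s+\deltas)-\NN{dec}(\bm x\,|\,\bm s)|\le C_\sigma\|\bm W_{\bm s}\|_2\|\deltas\|_2 ,
\]
and this bound is uniform in $\bm x$. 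Squaring, integrating over $\Omega$ and taking square roots yields \eqref{eq:lipsch_const_ml}, which is the case $p=2$, $C_1=0$ of \eqref{eq:lipschitz_conditioning}. The bound on $L_{\NN{dec}}$ then follows by taking the supremum over $\deltas\neq\bm 0$.

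For part~\eqref{item:stability_2}, I set $\bm v=\mathcal D^{N_h}_{\bm\theta}(\bm s+\deltas)-\mathcal D^{N_h}_{\bm\theta}(\bm s)\in\R^{N_h}$. By linearity of the interpolant, the difference of the interpolants is $\sum_i v_i\varphi_i$, so its squared $L^2(\Omega)$ norm equals $\bm v^T\bm M\bm v$, where $\bm M_{ij}=\int_\Omega\varphi_i\varphi_j$ is the $P^1$ mass matrix. The key tool is the standard spectral equivalence of the mass matrix on a shape-regular, quasi-uniform family: there is $C_M>0$, independent of $h$, with $\lambda_{\max}(\bm M)\le C_M^2h^d$. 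This gives
\[
\Bigl\|\textstyle\sum_i v_i\varphi_i\Bigr\|_{L^2(\Omega)}\le C_Mh^{d/2}\|\bm v\|_2 .
\]
Next I bound $\|\bm v\|_2$. Since $\mathcal D^{N_h}_{\bm\theta}(\bm s)=\sigma(\bm W^{N_h}_{\bm\theta}\bm s+\bm b)$ with $\sigma$ applied componentwise, one has $\|\bm v\|_2\le C_\sigma\|\bm W^{N_h}_{\bm\theta}\deltas\|_2\le C_\sigma\|\bm W^{N_h}_{\bm\theta}\|_2\|\deltas\|_2$. Combining the two estimates gives \eqref{eq:lipsch_const_mb}.

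The final asymptotic statement uses quasi-uniformity once more. Together with boundedness of $\Omega$, it implies $N_h\simeq|\Omega|h^{-d}$, because each element has measure comparable to $h^d$ and the number of nodes is comparable to the number of elements. Therefore $h^{d/2}=O(N_h^{-1/2})$.

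The main obstacle is the mass-matrix eigenvalue bound. I would prove it elementwise rather than cite it. Decompose $\bm v^T\bm M\bm v=\sum_{K}\bm v_K^T\bm M_K\bm v_K$ over the elements $K$, and map each $K$ to the reference element. There $\bm M_K=|K|\,\hat{\bm M}$, with $\hat{\bm M}$ fixed, so $\bm v_K^T\bm M_K\bm v_K\le|K|\lambda_{\max}(\hat{\bm M})\|\bm v_K\|_2^2$. Shape regularity gives $|K|\lesssim h^d$ and also bounds how many elements share a node. Summing over $K$ then yields $\bm v^T\bm M\bm v\le C_M^2h^d\|\bm v\|_2^2$.
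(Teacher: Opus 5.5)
Your proposal is correct and follows the paper's proof. The meshless bound is the pointwise Lipschitz estimate integrated over $\Omega$. The mesh-based bound rewrites the squared $L^2(\Omega)$ norm as $\bm v^T\bm M\bm v$, then combines $\lambda_{\max}(\bm M)\le C_M^2h^d$ with $\|\bm v\|_2\le C_\sigma\|\bm W^{N_h}_{\bm\theta}\|_2\|\deltas\|_2$ and $h^d=O(N_h^{-1})$. The only difference is that you prove the mass-matrix eigenvalue bound elementwise (reference-element scaling plus bounded node valence), whereas the paper simply cites it from standard finite element theory.
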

The previous result, proved in Appendix~\ref{appendix:stability}, establishes a profound architectural distinction between the two considered families of models.
Equation~\eqref{eq:lipsch_const_ml} reveals that if we consider a single meshless model for the entire family of meshes, then $\lipdec$ is constant.
Even in the case of a separate mesh-agnostic model for each value of $N_h$, the bound is independent of $N_h$ whenever $\sup_h\|\bm W_{\bm s}^{(h)}\|_2<\infty$ (verified empirically in Subsection~\ref{subsec:burgers}).
This fact helps to isolate the amplification of latent space errors from the spatial resolution.

In contrast, for mesh-based models, the bound depends on $N_h$ both through the spatial scaling factor $N_h^{-\frac{1}{2}}$ and the size of the weight matrix $\bm W^{N_h}_{\bm\theta}\in\R^{N_h\times n}$, which in turn can influence its spectral norm.
From classical results of random matrix theory~\cite{vershynin_high-dimensional_2018}, if the entries of $\bm W^{N_h}_{\bm\theta}$ are independent, zero-mean, and sub-Gaussian (a family that naturally includes standard Gaussian and uniform initialization schemes), then $\left\|\bm W^{N_h}_{\bm\theta}\right\|_2 = O(\sqrt{N_h})$ with high probability for fixed $n$ as $N_h \to \infty$.
Under these conditions, such as when using He initialization~\cite{he_delving_2015}, the bound on the Lipschitz constant behaves nicely as
\begin{equation}\label{eq:kaiming}
L_{\tilde{\mathcal{D}}^{N_h}_{\bm\theta}}=O\left(\frac{\left\|\bm W^{N_h}_{\bm\theta}\right\|_2}{\sqrt{N_h}}\right)=O\left(\frac{\sqrt{N_h}}{\sqrt{N_h}}\right)= O(1).
\end{equation}
This ensures that small perturbations are not asymptotically amplified in the large mesh limit, at least at the beginning of training, when the weights satisfy our hypothesis.

However, other common initializations, such as Glorot~\cite{glorot_understanding_2010}, scale the variance inversely proportional to the output dimension $N_h$.
In this case, the matrix norm becomes approximately constant with respect to $N_h$, meaning that $\left\|\bm W^{N_h}_{\bm\theta}\right\|_2 = O(1)$.
Therefore, we have
\begin{equation}\label{eq:xavier}
L_{\tilde{\mathcal{D}}^{N_h}_{\bm\theta}}=O\left(\frac{\left\|\bm W_{\bm\theta}^{N_h}\right\|_2}{\sqrt{N_h}}\right)=O\left(\frac{1}{\sqrt{N_h}}\right),
\end{equation}
which in turn implies that $L_{\tilde{\mathcal{D}}^{N_h}_{\bm\theta}}\xrightarrow{N_h \to \infty} 0$.
Recalling that only constant functions have the Lipschitz constant equal to $0$, the previous observation means that, in the continuous limit, the initialized decoder becomes asymptotically independent of the latent state, and its output will be constant in time and parameter in the $L^2(\Omega)$ sense.
Under this set of hypotheses, if the model is trained with a loss representing the average error across trajectories, the continuous limit of a mesh-based decoder would then simply learn the mean field across simulations.

It is important to note, however, that this predicted mean-field regime primarily characterizes the network at initialization.
While scaling weights to ease optimization~\cite{glorot_understanding_2010} initially drives the decoder toward this state, the assumptions regarding $\bm W^{N_h}_{\bm\theta}$ typically break down as training progresses, as reflected in our results (Subsection~\ref{subsec:burgers}). 
More generally, empirical evidence shows that the weights of well-trained networks frequently converge to heavy-tailed distributions~\cite{martin_implicit_2021}, which could even cause the maximum singular value to grow faster than $O(\sqrt{N_h})$.

As we formalize next, a small decoder Lipschitz constant forces the latent trajectories of a model to be large, even in the case of well-trained networks.

\begin{proposition}[Norm of the latent trajectories]\label{corollary:magnitude}
For a given $N_h\in\mathbb{N}$, denote by $C$ the quantity
\begin{equation*}%\label{eq:equation_C}
   C= \sup_{\substack{{t_1,t_2\in I}\\ \bm\mu\in\mP}}\left\| u_h(\,\cdot\,,t_1;\bm\mu)- u_h(\,\cdot\,,t_2;\bm\mu)\right\|_{L^2(\Omega)},
\end{equation*}
and let $\varepsilon\in\R$ such that $0<\varepsilon<C$.
Then, if $(\ethnh,\,\NN{dyn}, \NN{dec})$ are such that
\begin{equation}\label{eq:condition_corollary}
    \sup_{(t,\bm\mu)\in I\times \mP} \|u_h(\,\cdot\,,t;\bm\mu)-\NN{dec}(\,\cdot\,|\, \bm s(t;\bm\mu))\|_{L^2(\Omega)}\leq\frac{\varepsilon}{2},
\end{equation}
it holds that
\begin{equation}\label{eq:latent_magnitude_2}
    S=\sup_{(t,\bm\mu)\in I\times\mP}\|\bm s(t;\bm\mu)\|_2 \geq \frac{C-\varepsilon}{2L_{\NN{dec}}}.
\end{equation}
Furthermore, if $\tilde{\mathcal D}_{\bm\theta}^{N_h}$ satisfies an analogous error bound, then
\begin{equation}\label{eq:latent_magnitude_3}
    S_{N_h} \geq \frac{C-\varepsilon}{2L_{\tilde{\mathcal{D}}_{\bm\theta}^{N_h}}}.
\end{equation}
\end{proposition}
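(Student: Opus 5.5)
The plan is a triangle inequality combined with the Lipschitz bound on the decoder. The target constant $C$ measures how much the true solution varies in time, while the decoder can only produce variations of size at most $L_{\NN{dec}}$ times the variation in the latent state. Hence the latent states must themselves vary enough, which forces their norm to be large.

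\textbf{Step 1 (near-extremal pair).} Fix any $\eta\in(0,C-\varepsilon)$. By definition of $C$ as a supremum, there exist $t_1,t_2\in I$ and $\bm\mu\in\mP$ such that
\begin{equation*}
\|u_h(\cdot,t_1;\bm\mu)-u_h(\cdot,t_2;\bm\mu)\|_{L^2(\Omega)}\geq C-\eta .
\end{equation*}
If $C=+\infty$, we instead take pairs with arbitrarily large difference, and the bound $S=+\infty$ follows by the same argument. This step is needed because the supremum need not be attained.

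\textbf{Step 2 (triangle inequality).} Write $\bm s_i=\bm s(t_i;\bm\mu)$ and insert $\NN{dec}(\cdot\,|\,\bm s_1)$ and $\NN{dec}(\cdot\,|\,\bm s_2)$. Applying \eqref{eq:condition_corollary} at both times gives
\begin{equation*}
C-\eta\leq \tfrac{\varepsilon}{2}+\|\NN{dec}(\cdot\,|\,\bm s_1)-\NN{dec}(\cdot\,|\,\bm s_2)\|_{L^2(\Omega)}+\tfrac{\varepsilon}{2}.
\end{equation*}
By the definition of $L_{\NN{dec}}$, the middle term is at most $L_{\NN{dec}}\|\bm s_1-\bm s_2\|_2$. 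Proposition~\ref{prop:stability_L2} guarantees that $L_{\NN{dec}}$ is finite, and $L_{\NN{dec}}>0$ is forced by this same inequality, since $C-\eta-\varepsilon>0$.

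\textbf{Step 3 (conclusion).} We have $\|\bm s_1-\bm s_2\|_2\leq\|\bm s_1\|_2+\|\bm s_2\|_2\leq 2S$. Combining,
\begin{equation*}
C-\eta-\varepsilon\leq 2L_{\NN{dec}}S .
\end{equation*}
Letting $\eta\to0$ yields \eqref{eq:latent_magnitude_2}.

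\textbf{Mesh-based case.} Bound \eqref{eq:latent_magnitude_3} follows verbatim with $\tilde{\mathcal D}_{\bm\theta}^{N_h}$ and its Lipschitz constant $L_{\tilde{\mathcal D}^{N_h}_{\bm\theta}}$ replacing $\NN{dec}$ and $L_{\NN{dec}}$. This uses the analogous error hypothesis on $\tilde{\mathcal D}_{\bm\theta}^{N_h}$ and the finite Lipschitz bound from item~\ref{item:stability_2} of Proposition~\ref{prop:stability_L2}, and yields a bound on $S_{N_h}$, the supremum of the norms of that model's latent trajectories.

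The argument is elementary. The only points requiring care are the non-attainment of the supremum defining $C$, handled by the $\eta$-approximation, and the degenerate cases $L=0$ and $C=\infty$, both dealt with above. I expect no substantive obstacle beyond this bookkeeping.
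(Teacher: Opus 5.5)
Your proof is correct and follows the paper's argument: insert the decoder outputs at $\bm s(t_1;\bm\mu)$ and $\bm s(t_2;\bm\mu)$, apply the error bound twice and the Lipschitz bound once, then use $\|\bm s(t_1;\bm\mu)-\bm s(t_2;\bm\mu)\|_2\le 2S$. The paper skips your $\eta$-approximation by taking the supremum over $(t_1,t_2,\bm\mu)$ directly, since the right-hand side $\varepsilon+2SL_{\NN{dec}}$ is already uniform; also, Proposition~\ref{prop:stability_L2} only covers one-layer weakly conditioned meshless decoders and one-layer mesh-based decoders, so in general finiteness of the Lipschitz constant is an implicit hypothesis of the statement rather than a consequence of that proposition.
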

The proof of Proposition~\ref{corollary:magnitude} can be found in Appendix~\ref{appendix:stability}, and although we restrict the discussion to the $L^2(\Omega)$ norm, analogous claims hold for $p\in[1,\infty]$.
The bound in Equation~\eqref{eq:condition_corollary} is satisfied, for instance, by the networks constructed in the proof of Theorem~\ref{theo:uat} when discrete time stepping is substituted by continuous time integration (Remark~\ref{rmk:uat_continuous}).
Equations~\eqref{eq:latent_magnitude_2} and~\eqref{eq:latent_magnitude_3} serve as an analytical diagnostic tool for trained architectures.
They formalize the principle that if a well-trained decoder is characterized by a small Lipschitz constant $L_D$, with $D\in\{\NN{dec}, \tilde{\mathcal{D}}_{\bm\theta}^{N_h}\}$, then the latent trajectories cannot collapse; rather, they must expand across the latent space, i.e., $S$ needs to be large to match the spread of the high-fidelity data.
Conversely, a large $L_{D}$ allows for small latent trajectories, but could potentially amplify errors.

Additionally, for meshless methodologies, imposing weight regularization as a penalty term directly impacts the resulting Lipschitz constant in view of Proposition~\ref{prop:stability_L2}, as well as the magnitude $S$ of the latent trajectories.
Conversely, in the mesh-based case, the Lipschitz constant might also depend on $N_h$. 

The constant $C$ represents the maximum distance between snapshots over time and parameters.
While $S$ can be evaluated over full trajectories to account for test-time behavior, to obtain an estimate $C_\text{train}$ of $C$, we can approximate it by the maximum pairwise $L^2(\Omega)$ distance between snapshots within the empirical training dataset, so that it holds $C_\text{train}\leq C$, and the bound in Equation~\eqref{eq:latent_magnitude_2} remains valid with $C$ replaced by $C_\text{train}$.

Overall, our findings show that the limit for $N_h\to\infty$ of mesh-based models may exhibit instabilities in the $L^2(\Omega)$ sense if not properly trained, whereas meshless models do not suffer from this limitation a priori.

\subsection{Comparison with existing architectures}\label{subsec:spectral_bias}
In this subsection, we compare the primary methodological properties of the aforementioned architectures with other state-of-the-art models in the literature (see Table~\ref{tab:comparison_architectures}).
In particular, meshless \glspl{ldm} allow evaluation at arbitrary points in space and time (hence, super- and sub-resolution, as well as extrapolation), keep a low number of parameters (unlike \glspl{no}) and independent\footnote{In this discussion, we assume that no encoder is used, as in the case of \glspl{ldnet}.} of $N_h$ (unlike traditional \glspl{ae}).
Similarly to \glspl{no}, \glspl{ldm} can be trained on multifidelity data, and the map from $\bm s(t;\bm\mu)$ to $\bm u_\mathrm{sim}(\,\cdot\,,t;\bm\mu)$ is a vector-to-function map, consistent with some prominent existing \gls{no} architectures~\cite{huang_operator_2025}, such as the autoencoder neural operator~\cite{berner_principled_2026}, or other \glspl{no} that rely on latent time-stepping~\cite{li_deep_2026} and more generally on continuous dynamics~\cite{abueidda_time_2026}.

\begin{table}[tbp]
\caption{Comparison between various existing architectures. The symbol $\square$ indicates properties that depend on architectural choices and implementation.}
\label{tab:comparison_architectures}
\centering
\begin{tabular}{c|c|c|c|c|c}
         & Causality & Time continuity & Multifidelity & Off-grid evaluation & \# Params \\ 
\midrule
        Neural operator        & $\square$ & $\square$ & \checkmark & \checkmark & $O(1)$ \\ 
        Autoregressive    & \checkmark & \ding{55} & $\square$ & $\square$  & $\square$ \\ 
        Vanilla CNeF  & \ding{55}& \ding{55} & \checkmark & \checkmark & $O(1)$ \\ 
        Mesh-based LDM       & \checkmark & \checkmark & \ding{55} & \ding{55} & $O(N_h)$ \\ 
        Meshless LDM            & \checkmark & \checkmark & \checkmark  & \checkmark & $O(1)$ \\ 
\bottomrule
\end{tabular}
\end{table}

Despite these advantages, the reliance on \glspl{cnef} introduces specific challenges, such as the well-known \emph{spectral bias}, the inherent tendency of coordinate-based \glspl{nn} toward learning functions with low spatial frequencies~\cite{rahaman_spectral_2019,cao_towards_2021}.
Although mitigation strategies exist, such as the use of periodic activation functions~\cite{sitzmann_implicit_2020} or suitable coordinate embeddings~\cite{tancik_fourier_2020}, capturing high-frequency details remains an intrinsic training difficulty for \glspl{nef}. 
This challenge can also be viewed through the lens of architectural locality.
In a mesh-based decoder, each \gls{dof} corresponds to a separate output coordinate, so that, for a fully connected output layer, an entry in the $k$-th row of the weight matrix only affects the prediction at the $k$-th \gls{dof}.
In a meshless decoder, instead, outputs at all spatial locations are produced by the same set of weights, and updating any weight alters the reconstructed field over the entire domain.
This localized influence may give mesh-based models more granular control over small-scale, high-frequency spatial details.

Furthermore, to reconstruct the values of the solution at $N_h$ nodes, a mesh-based model only requires a single forward pass, whereas a meshless model needs to perform $N_h$ evaluations, potentially making the training slower and increasing peak memory consumption in spite of the much smaller number of trainable parameters.
Nevertheless, it is possible to find suitable remedies to alleviate the computational burden, such as gradient checkpointing~\cite{chen_training_2016}, significantly reducing the memory overhead.
Additionally, the training speed can be improved by uniformly subsampling the spatial coordinates at each epoch, a strategy that circumvents the need to evaluate the full grid and naturally opens the door to active learning or adaptive batching paradigms~\cite{regazzoni_learning_2024}.

Finally, as mesh-based architectures do not explicitly process spatial coordinates, they enable the use of preliminary linear dimensionality reduction techniques such as \gls{pod}.
By predicting only a low-dimensional set of $N_\mathrm{POD}$ coefficients instead of full high-dimensional fields, this approach significantly reduces the \gls{nn} parameter count~\cite{fresca_pod-dl-rom_2022}.
In spite of its computational efficiency, this two-stage strategy (i) is limited to the specific spatial discretization used during the \gls{pod} phase, (ii) can struggle to accurately represent phenomena characterized by a slowly decaying Kolmogorov $n$-width~\cite{khamlich_optimal_2025} due to its linear nature, and (iii) cannot continuously evaluate the solution at off-grid spatial coordinates.

%% file: sections/results.tex
\section{Numerical results}\label{sec:results}
In this section, we evaluate and compare the performance of meshless \glspl{ldm} with their mesh-based counterparts across three computational mechanics benchmarks.
The test cases under investigation---the Burgers' equation, a Hopf bifurcation in the Navier--Stokes equations, and a linear advection problem---are specifically chosen to assess the methodology's accuracy, model complexity, and computational efficiency.
Respectively, we investigate: (i) the model's robustness to steep gradients and shocks, together with an empirical validation of our theoretical bounds, (ii) the impact of a strongly conditioned decoder in a more complex scenario, together with a physically interpretable visualization of spectral bias, and (iii) the effect of the number and type of conditioning layers in the decoder on accuracy and sensitivity to latent perturbations.
Following the discussion in Section~\ref{sec:methodology}, we do not consider here the pure conditioning case (i.e., hypernetworks), as the conditioning variable is the latent state $\bm s\in\R^n$ (with $n$ up to $25$), and its cost, scaling as $nN^2$ (Appendix~\ref{appendix:formalism}), would be considerably higher than the $N^2$ of the adopted weak and strong conditioning.

Our findings demonstrate that meshless models achieve comparable or better accuracy while requiring up to $45$ times fewer trainable parameters.
Furthermore, the results are consistent with our stability analysis: strong conditioning increases the sensitivity to latent perturbations in extrapolation; this may degrade performance, as in the Navier--Stokes benchmark, or it may yield more accurate models, as in the advection benchmark.
Finally, we empirically confirm that, unlike the mesh-based models considered, mesh-agnostic methods exhibit an error, a latent trajectory magnitude, and a decoder Lipschitz constant that remain stable across varying mesh resolutions.

All models, including meshless \glspl{ldm}, \gls{ldfnn}, and \gls{ldgcn}, were implemented using PyTorch~\cite{paszke_pytorch_2019} and PyTorch Geometric~\cite{fey_pyg_2025}.
Architectural hyperparameters and training procedures are comprehensively detailed in Appendix~\ref{appendix:architectures}.

As concerns the error metrics we denote by $\bm{u}_h, \bm{u}_\text{sim}:I_h\times\mathbb{P}_h\to\mathbb{R}^{N_h\times d_{\bm{u}}}$ the full-order and reduced solutions evaluated at the \glspl{dof} for $(t,\bm{\mu})\in I_h\times\mathbb{P}_h$, and define the relative error field $\bm{e}_\text{rel}:I_h\times\mathbb{P}_h\to\mathbb{R}^{N_h\times d_{\bm{u}}}$ as 
\begin{equation*}%\label{eq:rel_ell}
    \bm{e}_\text{rel}(t; \bm{\mu})= \frac{\bm{u}_h(t; \bm{\mu})-\bm{u}_\text{sim}(t; \bm{\mu})}{\|\bm{u}_h(t; \bm{\mu})\|_{2}}, \qquad \text{and} \quad
    \varepsilon_\text{rel}(t; \bm{\mu})=\|\bm{e}_\text{rel}(t; \bm{\mu})\|_{2},\quad\forall \,(t,\bm{\mu})\in I_h\times\mathbb{P}_h,
\end{equation*}
indicating with $\varepsilon^\text{train}_\text{mean}$/$\varepsilon^\text{test}_\text{mean}$ and $\varepsilon^\text{train}_\text{max}$/$\varepsilon^\text{test}_\text{max}$ the mean and maximum relative errors over the training and testing datasets.

\subsection{Burgers' equation with changing initial conditions}\label{subsec:burgers}
The first benchmark we consider is the one-dimensional viscous Burgers' equation, a prototypical nonlinear convection--diffusion problem widely used as a simplified model for fluid dynamics, characterized by shock formations possibly leading to meshless \glspl{ldm} to suffer from challenging spectral bias. 
Our primary goals here are to demonstrate the applicability of the methodology without an explicit encoder, even when dealing with varying initial conditions, and to empirically validate the bounds discussed in Section~\ref{sec:analysis}.

\subsubsection{Problem definition and data collection}
Let $\Omega=(0,1)\subset\mathbb{R}$ and consider the following problem:
\begin{equation*}
\begin{cases}
\dfrac{\partial u}{\partial t}
+u\dfrac{\partial u}{\partial x}
-\nu\dfrac{\partial^2u}{\partial x^2}=0,
& \text{in } \Omega\times(0,T],\\
u=0,
& \text{on } \partial\Omega\times(0,T],\\
u=u_0,
& \text{on } \Omega\times\{0\},
\end{cases}
\end{equation*}
where $u:\Omega\times[0,T]\to\mathbb{R}$ denotes the solution field, $\nu>0$ is the viscosity coefficient, and $T=1$.
To investigate the performance of meshless, encoder-free \glspl{ldm} and their mesh-based counterparts, we treat both the viscosity and the initial condition as varying parameters.
The parameter space for the viscosity is defined as $\nu\in\mP=[0.002, 0.02]$, while the initial condition is prescribed as a Gaussian profile $u_0(x)=\exp\left(-50(x-c)^2\right)$, with the center $c\in[0.3, 0.7]$.

Since we aim at studying the stability of the models and the behavior of the Lipschitz constants in connection with the mesh resolution, we evaluate the methodologies across varying spatial discretizations. 
We select $5$ $\log$-equispaced values for the number of nodes $N_h\in\{100,211,447, 946,2000\}$, and for each discretization, we execute the simulations using $5$ different random seeds, combining $10$ equispaced values of $c$ and $5$ equispaced values of $\nu$, resulting in a total of $50$ distinct parametric configurations.
For the spatial discretization, we employ the \gls{fe} method, defining the space $\mathcal P^1$ on a uniform mesh of $N_h$ nodes.
Time integration is performed via the implicit Euler method with a uniform time step $\Delta t=10^{-2}$, and the nonlinear algebraic system arising at each time step is solved using Newton's method~\cite{quarteroni_numerical_2007}.
The dataset is split by randomly selecting $80\%$ of the trajectories for training, and we restrict the training loss to the first $80\%$ of the system's temporal evolution to explicitly test the capacity of the models for temporal extrapolation.
Figure~\ref{fig:fields_burgers} displays the solution fields corresponding to two different initial conditions and values of $\nu$.

\begin{figure}[tbp]
    \centering
    \begin{subfigure}[b]{0.3\textwidth}
        \centering
        \includegraphics[width=\textwidth]{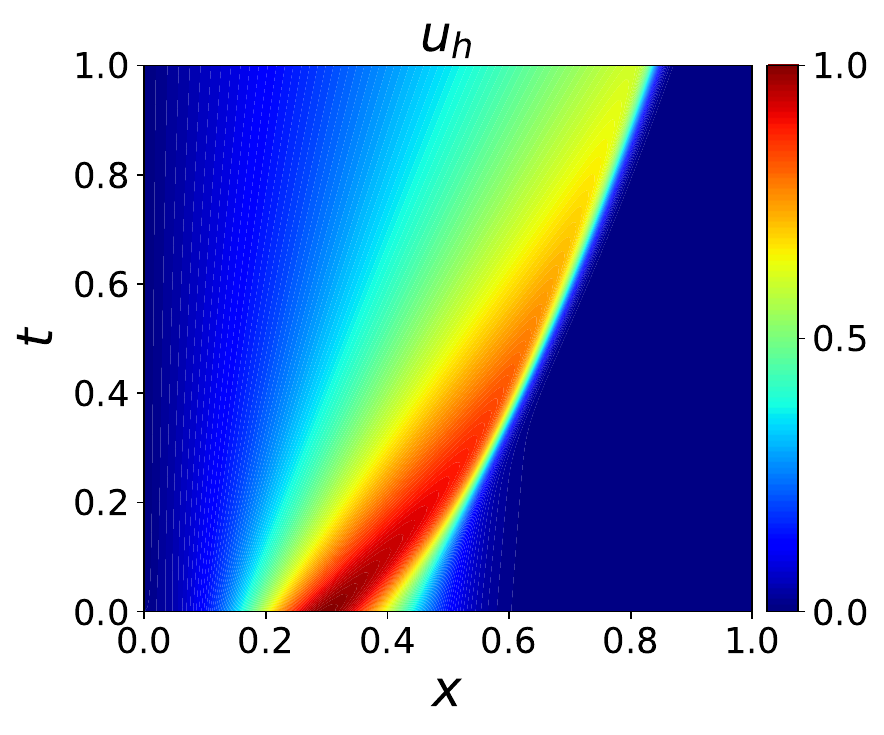}
        \caption{$c=0.3,\,\nu=2\cdot10^{-3}$}
    \end{subfigure}
    \hfill
    \begin{subfigure}[b]{0.3\textwidth}
        \centering
        \includegraphics[width=\textwidth]{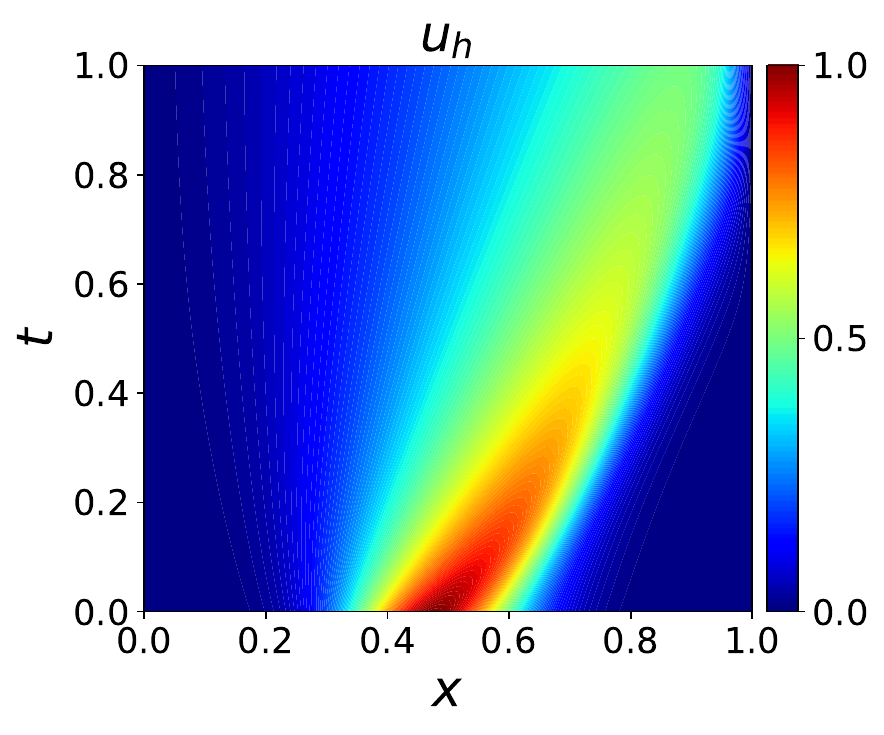}
      \caption{$c=0.48,\,\nu=1.1\cdot10^{-2}$}
    \end{subfigure}
     \hfill
    \begin{subfigure}[b]{0.3\textwidth}
        \centering
        \includegraphics[width=\textwidth]{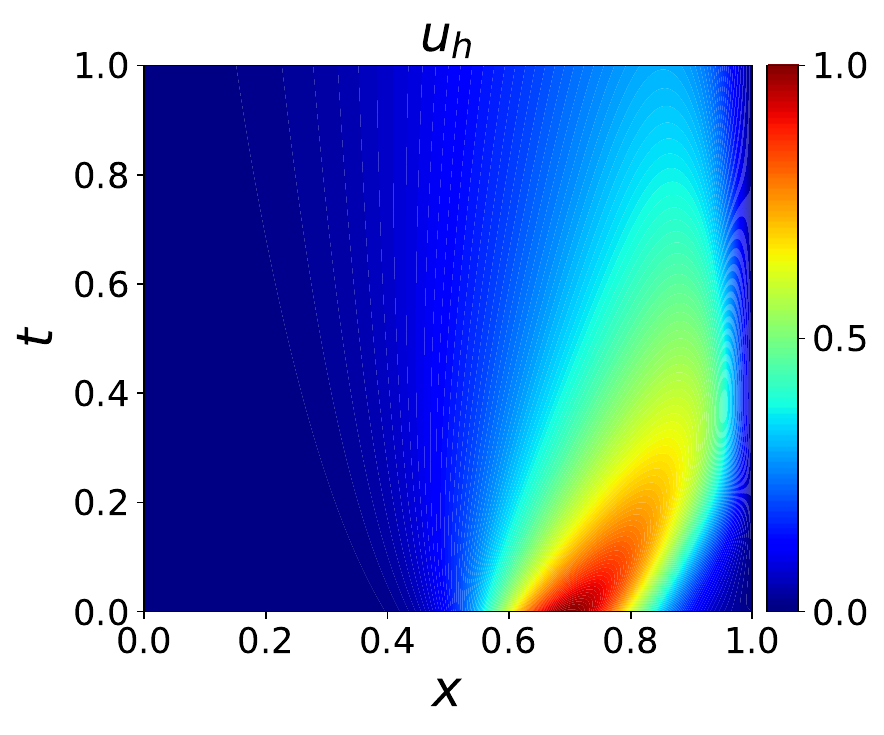}
      \caption{$c=0.7,\,\nu=2\cdot10^{-2}$}
    \end{subfigure}
\caption{Burgers solution fields for different parametric samples.}
    \label{fig:fields_burgers}
\end{figure}

\subsubsection{Model performance and bound investigation}
We evaluate three distinct decoder architectures: a \gls{ldnet} model and two \glspl{ldfnn} with Glorot and He initializations, respectively, to empirically observe the post-training bounds in Equations~\eqref{eq:kaiming} and~\eqref{eq:xavier}.
Consistent with recent approaches~\cite{koike_sparse_2026}, rather than using a trainable encoder, we compute the \gls{pod} on the set of training initial conditions, and for each trajectory, the first $n$ \gls{pod} coefficients of each full-order initial condition are assigned as the initial latent state.

While the error metrics described at the beginning of the section are standard in machine learning, in this setting we instead report $\varepsilon_{L^2(\Omega)}^\text{test}$, which is computed using a numerical approximation of the continuous $L^2(\Omega)$ norm (see Appendix~\ref{appendix:continuous}) to enable empirical evaluation of the theoretical bounds in Section~\ref{sec:analysis}, which are formulated in terms of continuous norms.

Figure~\ref{fig:burgers_analysis} illustrates the values of $S$, the Lipschitz constants $L_D$ for $D\in\{\decnh, \NN{dec}\}$ (estimated as detailed in Appendix~\ref{appendix:continuous}), $\varepsilon^\text{test}_{L^2(\Omega)}$, and the number of trainable parameters for all considered architectures across changing values of $N_h$. 
We observe that, for the Glorot initialization, the Lipschitz constant follows the asymptotic $O(N_h^{-\frac{1}{2}})$ behavior predicted in Equation~\eqref{eq:xavier} even after training for discretizations up to $N_h=946$, before increasing coherently with the discussion in Subsection~\ref{subsec:stability_ML_MB}.
The corresponding values of $S$ increase with $N_h$ up to $N_h=946$ and then plateau, mirroring the behavior of the Lipschitz constant and meaning that the latent trajectories tend to expand to compensate for its decay, following the condition in~\eqref{eq:latent_magnitude_2}.
Conversely, for He-initialized models, the Lipschitz constant shows a less prominent decrease, as predicted in Equation~\eqref{eq:kaiming}, and the corresponding values of $S$ do not show a monotonic trend related to mesh refinement.
This confirms that the expansion of the latent trajectories observed for the Glorot initialization is driven by the decay of the Lipschitz constant of the decoder.
Both \glspl{ldfnn} models show similar behavior, with $\varepsilon^\text{test}_{L^2(\Omega)}$ fluctuating across different refinement levels, and the number of trainable parameters scales as $O(N_h)$.

In contrast, for meshless models, $S$, $\lipdec$, and $\varepsilon^\text{test}_{L^2(\Omega)}$ are empirically shown to be $O(1)$.
Although a single meshless model could be trained on the whole family of meshes, to ensure a fair comparison with mesh-based approaches, here we train one model per discretization.
Therefore, the only difference across refinement levels is the discrete loss, which is a quadrature approximation of the continuous $L^2(\Omega)$ error~\cite{berner_principled_2026}, as opposed to mesh-based models, in which the number of neurons and structure of the architecture change for each different discretization.
Overall, while the independence of the parameter count from $N_h$ holds by design, the results show that the latent dynamics, the Lipschitz constant of the decoder, and the accuracy of meshless models exhibit limited sensitivity to mesh refinement, whereas the behavior of mesh-based models depends more noticeably on $N_h$.

\begin{figure}[tbp]
    \centering
    \includegraphics[width=\linewidth]{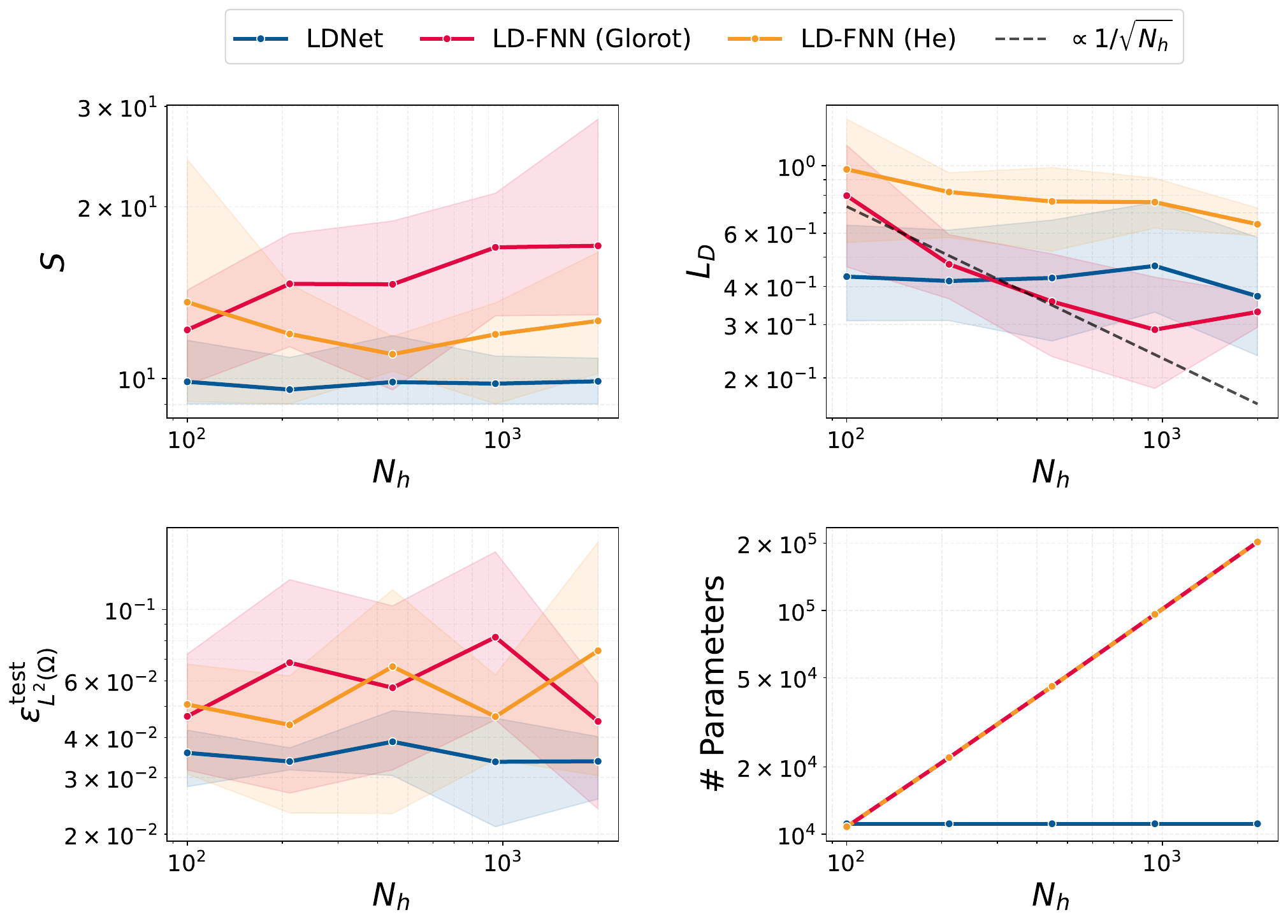}
    \caption{Evolution of $S$ (top left), Lipschitz constants $L_D$ (top right), error $\varepsilon^\text{test}_{L^2(\Omega)}$ (bottom left), and number of trainable parameters (bottom right) for increasing values of $N_h$. Shaded areas denote the interval between the minimum and maximum realization across different seeds.}
    \label{fig:burgers_analysis}
\end{figure}

\subsection{Flow past a cylinder}\label{subsec:navier_stokes}
The second test case investigates the unsteady Navier--Stokes equations governing an incompressible, viscous fluid within a two-dimensional domain containing an obstacle adapted from~\cite{chen_time_2025}.
Here, we aim at both comparing the performance of meshless and mesh-based methodologies, and investigating the effect of strong conditioning in the decoder.

\subsubsection{Problem definition and data collection}
Let us consider the following set of equations:
{\small
\begin{equation*}
\begin{cases}
\dfrac{\partial \bm{u}}{\partial t} -\mu \nabla^2 \bm{u}+ \left(\bm{u} \cdot \nabla\right)\bm{u} +\nabla p =\bm 0 & \text{in } \Omega \times (0, T], \\[4pt]
\nabla \cdot \bm{u} = 0 & \text{in } \Omega \times (0, T],\\[4pt]
\bm u = \bm u_0=\bm 0 & \text{on } \Omega\times\{0\},
\end{cases}
\qquad
\begin{cases}
\bm u=1.5\left[
   \dfrac{4y(0.41-y)}{0.41^2},\, 0\right]^T & \text{on } \Gamma_\text{in}\times (0,T],\\[4pt]
\bm u = \bm 0 & \text{on } \Gamma_\text{w}\times (0,T],\\[4pt]
\mu\dfrac{\partial\bm u}{\partial\bm n}-p\bm n=\bm 0 & \text{on } \Gamma_\text{out}\times (0,T],
\end{cases}
\end{equation*}
}
where $\bm x=(x, y)\in \Omega$ depicted in Figure~\ref{fig:cylinder_geometry_ids}, $\bm u$ and $p$ represent the velocity and pressure fields, respectively, and $\bm n$ is the outward normal vector.
Additionally, $\Gamma_\text{in}$, $\Gamma_\text{out}$ and $\Gamma_\text{w}$ represent the inflow, outflow, and no-slip boundaries, respectively.
As for the parametric range, we set $T=8$ and $\mu\in\mP=[3\cdot10^{-4}, 1.3\cdot10^{-3}]$, corresponding to a Reynolds number $Re\in[76, 330]$ and allowing us to observe the Hopf bifurcation~\cite{chen_time_2025, dijkstra_bifurcation_2023} characterizing the velocity field $\bm u$.

\begin{figure}[tbp]
\centering
\begin{tikzpicture}[scale=4, >=Stealth, font=\sffamily]

    % Fluid domain
    \filldraw[fill=mathlaborange, draw=black, thick] (0,0) rectangle (2.5, 0.41);

    % Domain vertex coordinates
    \node[below left] at (0,0) {$(0,0)$};
    \node[below right] at (2.5,0) {$(2.5,0)$};
    \node[above right] at (2.5,0.41) {$(2.5,0.41)$};
    \node[above left] at (0,0.41) {$(0,0.41)$};

    % Cylinder (obstacle) centered at C_x = 0.4, C_y = 0.205 with r = 0.05
    \filldraw[fill=white, draw=black, thick] (0.4, 0.205) circle (0.05);

    % Cylinder position dimensions
    \draw[<->, dashed, black] (0, 0.205) -- (0.35, 0.205) node[midway, above] {\footnotesize 0.35};
    \draw[<->, dashed, black] (0.4, 0) -- (0.4, 0.155) node[midway, right] {\footnotesize 0.155};
    
    % Cylinder diameter
    \draw[<->, black] (0.5, 0.155) -- (0.5, 0.255) node[midway, right] {\footnotesize $D = 0.1$};
    \draw[dashed, black] (0.4, 0.155) -- (0.5, 0.155);
    \draw[dashed, black] (0.4, 0.255) -- (0.5, 0.255);
     \node[align=center] at (0.4,0.31) {$\Gamma_{\text{w}}$};

    % Boundary labels
    \node[left=0.3cm, align=right] at (0, 0.205) {$\Gamma_{\text{in}}$};
    \node[right=0.1cm, align=left] at (2.5, 0.205) {$\Gamma_{\text{out}}$};
    % align=center to fix the LR mode errors
    \node[above=0.1cm, align=center] at (1.25, 0.41) {$\Gamma_{\text{w}}$};
    \node[below=0.1cm, align=center] at (1.25, 0) {$\Gamma_{\text{w}}$};

\end{tikzpicture}
    \caption{Computational domain in the Navier--Stokes benchmark.}
    \label{fig:cylinder_geometry_ids}
\end{figure}

To perform data collection, we employ the \gls{fe} method, using the Taylor-Hood pair $\mathcal{P}^2/\mathcal P^1$ to satisfy the inf--sup condition~\cite{quarteroni_numerical_2017}.
There are $N_h = 5017$ nodes in the mesh, and we employ the implicit Euler method~\cite{quarteroni_numerical_2007} to perform time-stepping, using a step size equal to $\Delta t=10^{-2}$, for $25$ different trajectories corresponding to equispaced values of $\mu\in\mP$.
To create the training dataset, we subsample the time interval, starting at $t_0=5$, to avoid the initial transient phase, and saving the snapshots at each $5\cdot 10^{-2}$ seconds.
We then only provide the first $70\%$ of the evolution of $70\%$ of randomly selected trajectories.
In this setup, only $740$ out of the $1525$ collected snapshots were seen by the network during training. 
Given the small magnitude of the values of $\mu$, the parameters are standardized to ensure zero mean and unit variance.

\subsubsection{Model performance}
We study the performance of \gls{ldfnn}, \gls{ldgcn}, \gls{ldnet}, and \gls{filmldm} and assess their ability to extrapolate in time and parameter.
In particular, for \gls{filmldm} we consider a decoder in which two layers are strongly conditioned.
Moreover, we consider a second \gls{ldnet} model, denoted as \gls{ldnet}$_{(0)}$, in which Fourier features are not employed (see Appendix~\ref{appendix:architectures}), to show how they help overcome the problem of spectral bias.

Table~\ref{tab:err_navier_stokes} compares the number of trainable parameters, training time, and performance of the different architectures for unseen values of $\mu$ and during extrapolation.
Errors corresponding to times within the training interval are not reported.
Indeed, due to the fixed latent initial condition, the maximum relative error across simulations always occurs at the first snapshot.
Including the training interval in the metrics would therefore mask the maximum error during extrapolation behind the one on the initial condition, which is not representative of how well the dynamics has been learned.

Figure~\ref{fig:errors_NS} shows the error evolution in time, with the shaded areas denoting the range between the minimum and maximum error across the values of $\mu$ for each model.
We first focus on the mesh-based models and on the meshless models employing Fourier features.
As shown in the figure, they all achieve comparable accuracy, with \gls{ldgcn} slightly outperforming the other methods, and \gls{ldnet} guaranteeing a smaller maximum error compared to \gls{ldfnn} despite using approximately $45$ times fewer parameters in the decoder, corresponding to approximately $2\%$ of the decoder parameters of \gls{ldfnn}.
Among these models, the highest relative mean and maximum errors are both attained by \gls{filmldm}, whose decoder is strongly conditioned on the latent state, whereas \gls{ldnet}, with weak conditioning, achieves lower errors with the same latent dimension.
This is consistent with the intuition of Section~\ref{sec:analysis} that a more expressive decoder can degrade performance in extrapolation.
The same architectural difference emerges in training: \gls{filmldm} is also the model most sensitive to the batching strategy.
Indeed, in all meshless models we evaluate the loss on $2000$ of the $5017$ mesh nodes per iteration (as detailed in Appendix~\ref{appendix:architectures}).
More aggressive batching  would significantly reduce the computational costs for meshless models; however, in our experiments, it prevented the decoders, particularly \gls{filmldm}, from overcoming the spectral bias discussed in Subsection~\ref{subsec:spectral_bias}, leading them to learn only the mean field.
This limit on batching is the main reason for the higher training time of meshless models in Table~\ref{tab:err_navier_stokes}.

Figure~\ref{fig:latent_NS} shows the evolution of the seventh component of the latent state $\bm s$ of \gls{ldnet} (chosen as a representative example) for all values of $\mu$, highlighting the trajectories corresponding to the smallest and largest values of $\mu$.
For the smallest value of $\mu$, i.e., the largest Reynolds number, $s_7$ oscillates periodically, reflecting the vortex shedding beyond the Hopf bifurcation, whereas for the largest value of $\mu$ it evolves without oscillations, consistently with the convergence of the solution to a steady state.
The latent trajectories therefore provide a clear and interpretable way to distinguish the bifurcating (periodic) regime from the steady one, similarly to what was observed for \gls{ldgcn} in~\cite{tomada_latent_2026} for the pitchfork bifurcation associated with the Coand\u{a} effect, where the amplitude of a single latent component was shown to recover the bifurcation diagram without accessing the full-order fields.

We now turn to \gls{ldnet}$_{(0)}$, which differs from \gls{ldnet} only in the absence of the Fourier feature mapping of the spatial coordinates.
Removing the Fourier features increases both the mean and the maximum test errors by a factor of approximately $5$, making \gls{ldnet}$_{(0)}$ the least accurate of all the considered models.
This degradation comes with negligible savings, as the Fourier features account for fewer than $5000$ decoder weights and for less than $3\%$ of the training time (Table~\ref{tab:err_navier_stokes}).
Figure~\ref{fig:NS_fields} shows three snapshots for different values of $\mu$ and $t$ (columns), together with their reconstructions using \gls{ldgcn}, \gls{ldnet}, and \gls{ldnet}$_{(0)}$ (rows), and clarifies the nature of this failure.
While \gls{ldnet}$_{(0)}$ accurately reconstructs the steady solution in Figure~\ref{fig:NS_field3}, in the periodic regime (Figures~\ref{fig:NS_field1} and~\ref{fig:NS_field2}) it only recovers the mean flow, missing the vortex shedding in the wake of the cylinder, which is captured by all the other methodologies.
Although undesirable, this failure mode has a clear physical interpretation, as the mean flow is the smooth, low-frequency component of the solution, and is therefore the one favored by the spectral bias discussed in Subsection~\ref{subsec:spectral_bias}, whereas the vortex street consists of the higher spatial frequencies that the decoder fails to resolve without a suitable coordinate embedding.
Overall, these results show that, in this setting, the use of Fourier features is essential for meshless decoders to achieve an accuracy comparable to that of mesh-based models.

\begin{table}[tbp]
\caption{Comparison of the architectures on the Navier--Stokes benchmark. Boldface highlights the best value, while underlined values denote the second-best performance.}
\label{tab:err_navier_stokes}
\centering
\begin{tabular}{c|c|c|c|c|c}
     & \multicolumn{2}{c|}{Mesh-based} & \multicolumn{3}{c}{Meshless} \\
     & \gls{ldfnn} & \gls{ldgcn} & \gls{ldnet} & \gls{filmldm} & \gls{ldnet}$_{(0)}$ \\ 
\midrule
    Training time (s) & $\bm{2960}$ & $\underline{6852}$ & $8781$ & $9267$ & $8522$ \\ 
    $\NN{dyn}$ & $11\,737$ & $11\,737$ & $11\,737$ & $11\,737$ & $11\,737$ \\ 
    $\mathcal{D}_{\bm\theta}^{N_h}$/$\NN{dec}$ & $\num{1016234}$ & $\num{1016240}$ & $\underline{22\,782}$ & $29\,837$ & $\bm{18\,022}$ \\
    $\varepsilon_\text{mean}^{\text{test}}$ & $\underline{2.21\cdot10^{-2}}$ & $\bm{1.66\cdot10^{-2}}$ & $2.44\cdot10^{-2}$ & $3.37\cdot10^{-2}$ & $1.21\cdot10^{-1}$ \\
    $\varepsilon_\text{max}^{\text{test}}$ & $6.84\cdot10^{-2}$ & $\bm{3.84\cdot10^{-2}}$ & $\underline{4.20\cdot10^{-2}}$ & $1.16\cdot10^{-1}$ & $2.33\cdot10^{-1}$ \\ 
\bottomrule
\end{tabular}
\end{table}

\begin{figure}[tbp]
    \centering
    \begin{subfigure}[b]{0.51\textwidth}
        \centering
         \includegraphics[width=\linewidth]{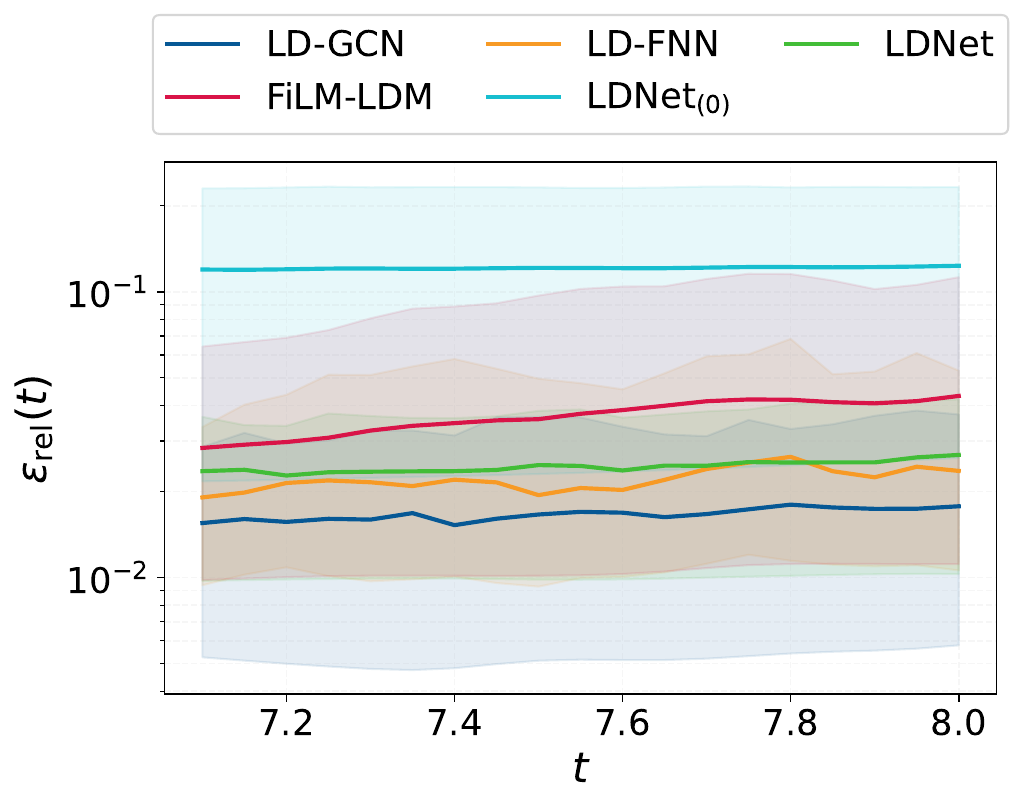}
    \caption{Relative errors}
    \label{fig:errors_NS}
    \end{subfigure}
        \hfill
    \begin{subfigure}[b]{0.48\textwidth}
        \centering
        \includegraphics[width=\textwidth]{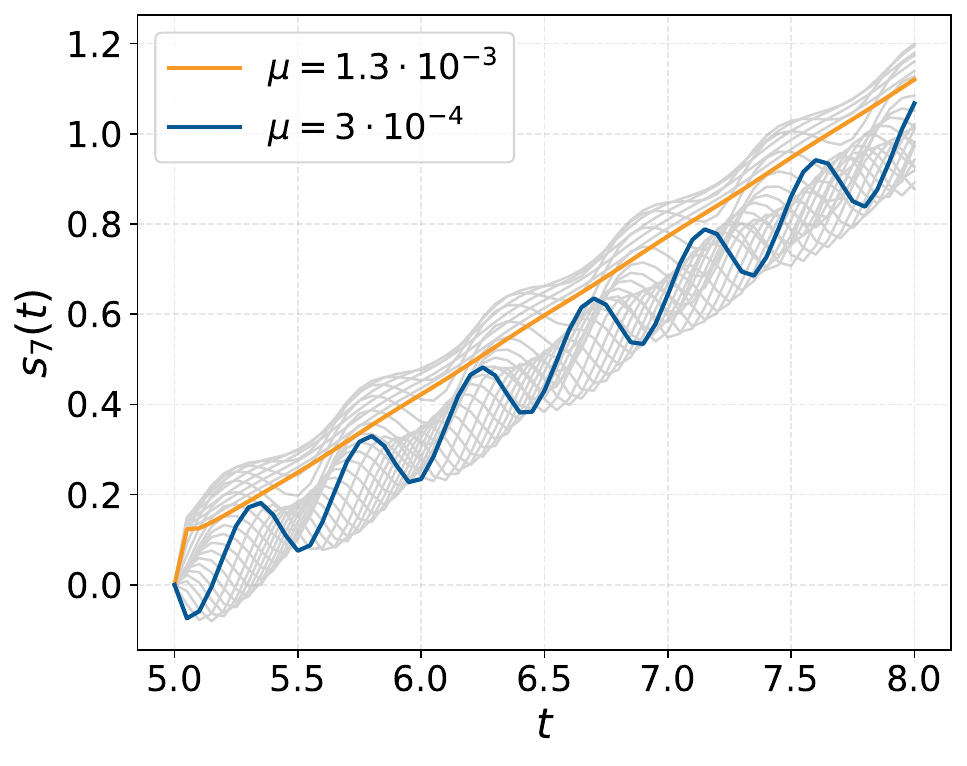}
        \caption{Evolution of $s_7$}
        \label{fig:latent_NS}
    \end{subfigure}
    \caption{Mean relative errors and evolution of the latent trajectories of \gls{ldnet} for all values of $\mu$ in the Navier--Stokes test case.}
    \label{fig:NS_error_and_latents}
\end{figure}

\begin{figure}[tbp]
    \centering
    \begin{subfigure}[b]{0.325\textwidth}
        \centering
        \includegraphics[width=\linewidth]{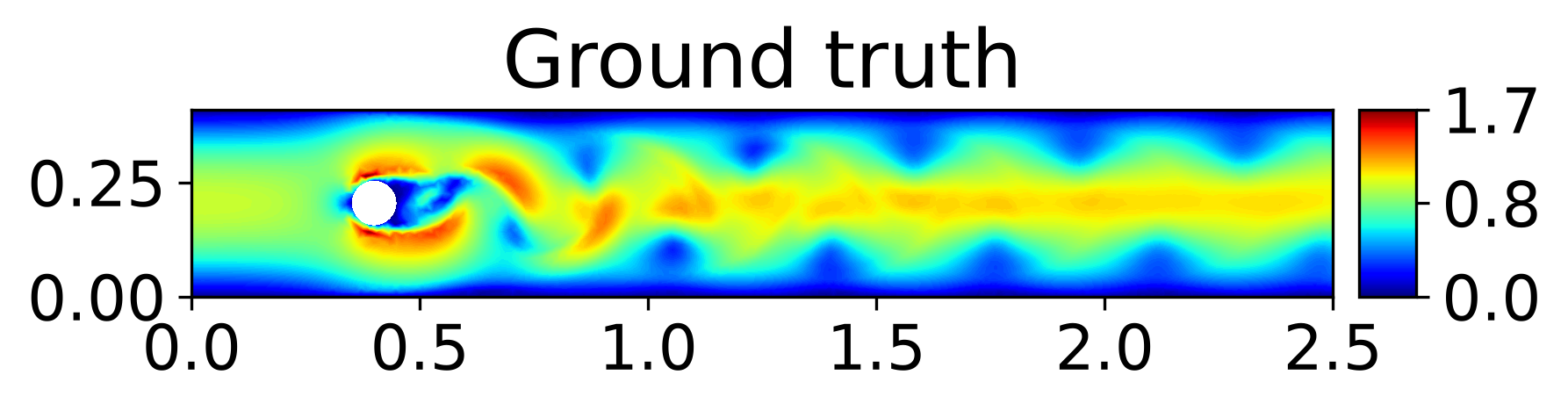}\\[2pt]
        \includegraphics[width=\linewidth]{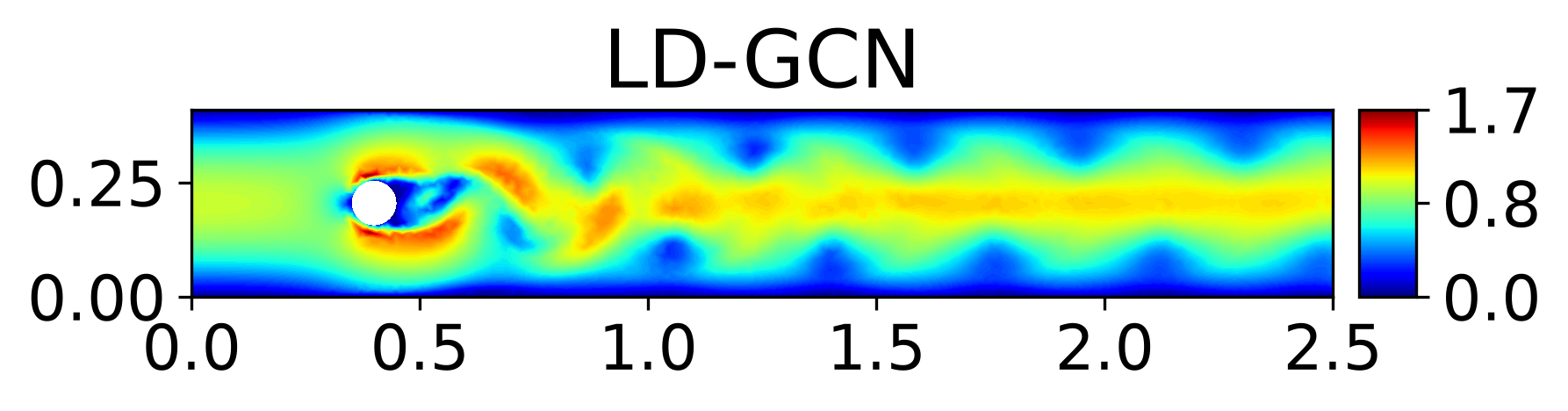}\\[2pt]
        \includegraphics[width=\linewidth]{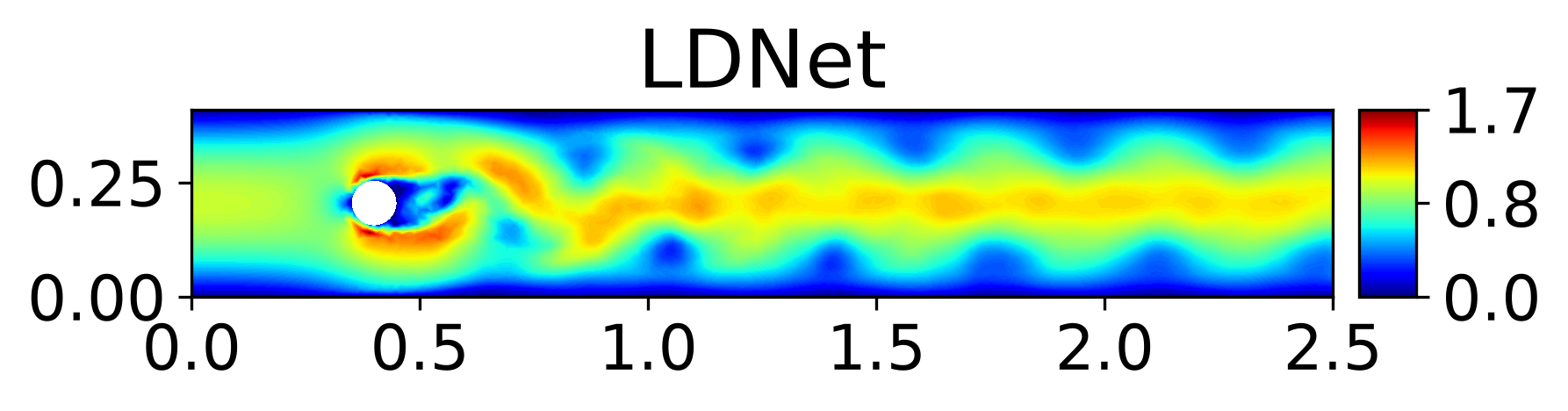}\\[2pt]
        \includegraphics[width=\linewidth]{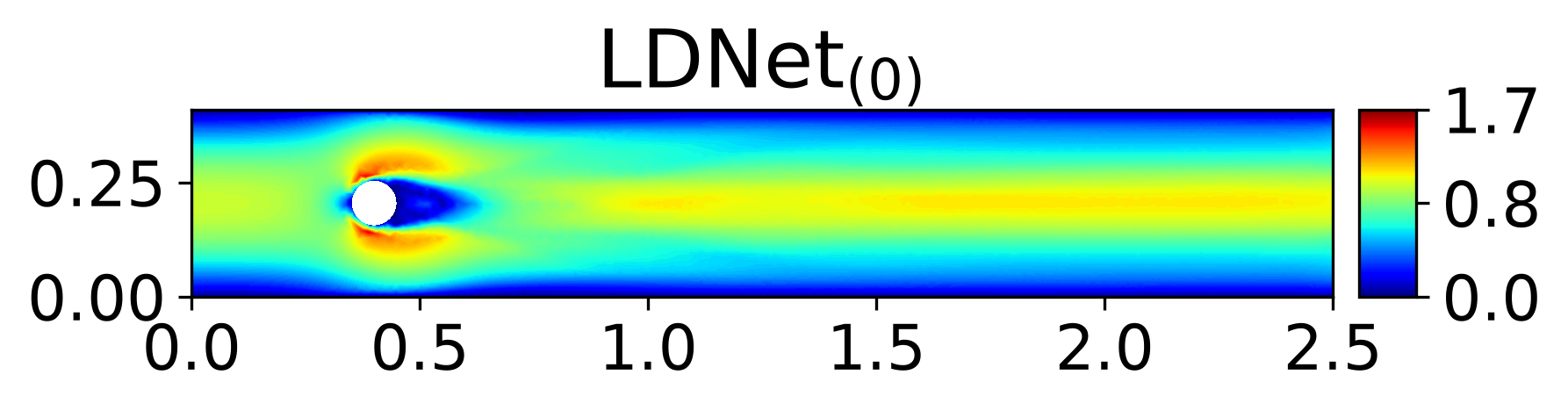}
        \caption{$\mu=3\cdot10^{-4},\,t=7.5$}
        \label{fig:NS_field1}
    \end{subfigure}
    \hfill
    \begin{subfigure}[b]{0.325\textwidth}
        \centering
        \includegraphics[width=\linewidth]{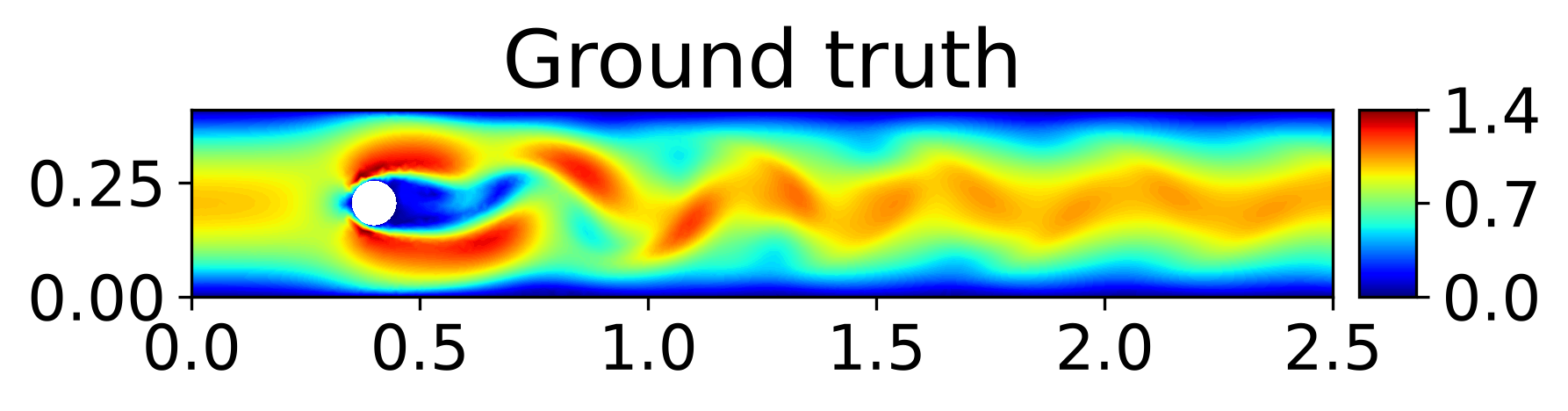}\\[2pt]
        \includegraphics[width=\linewidth]{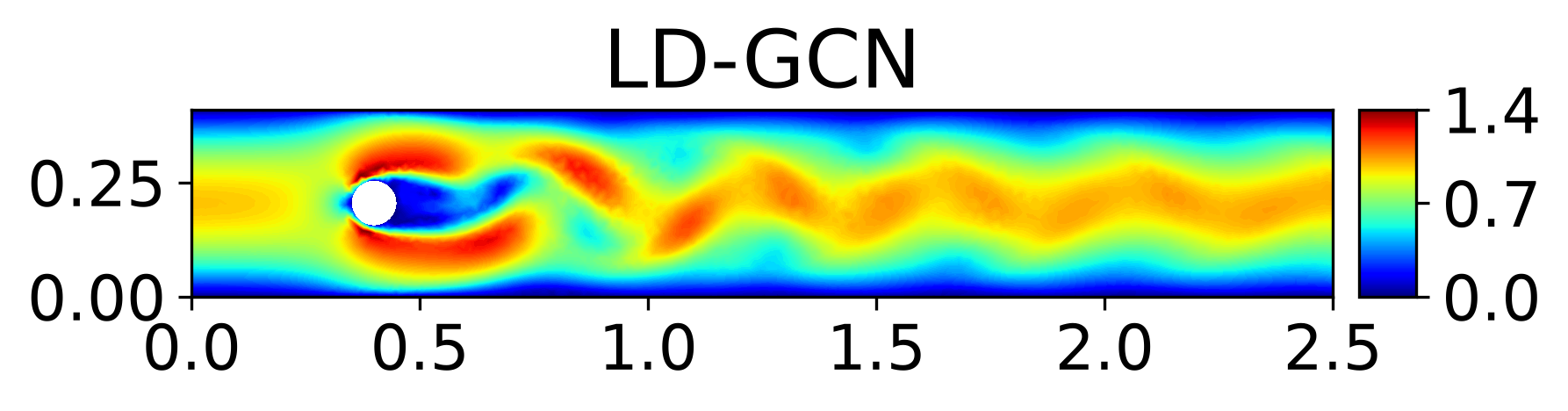}\\[2pt]
        \includegraphics[width=\linewidth]{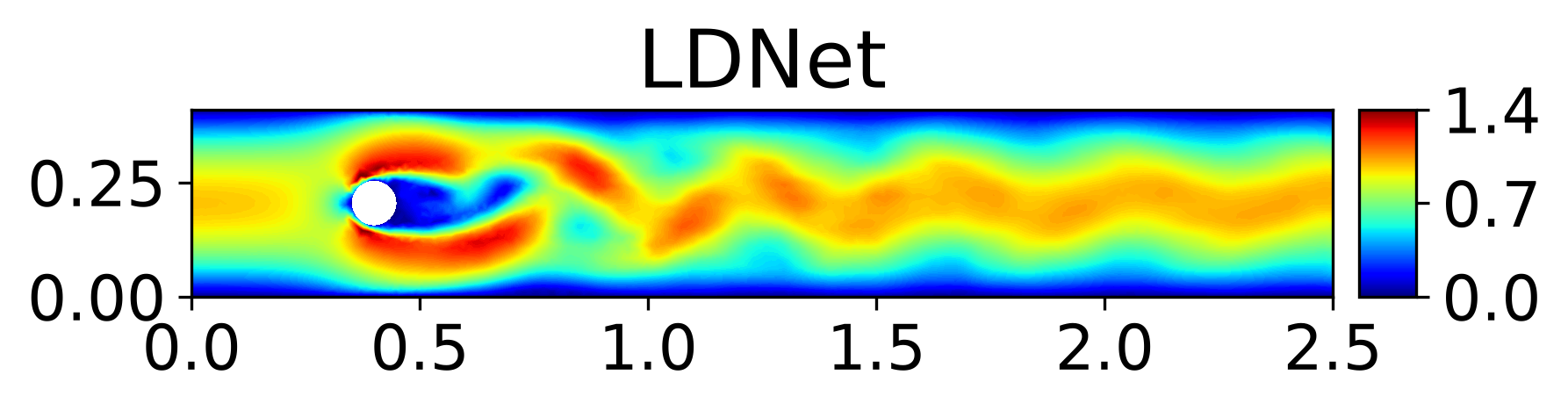}\\[2pt]
        \includegraphics[width=\linewidth]{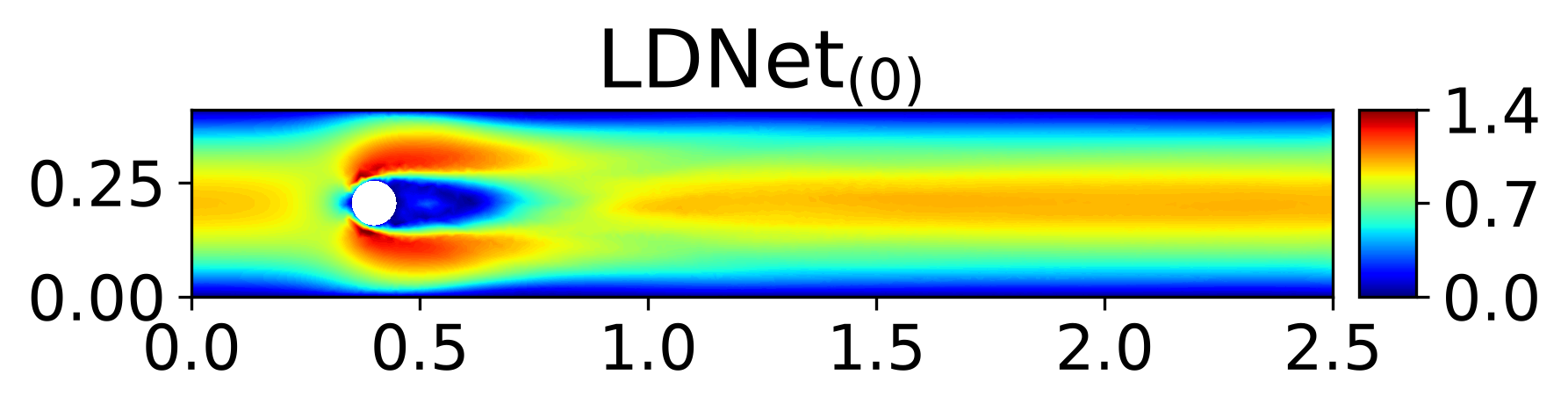}
        \caption{$\mu=8\cdot10^{-4},\,t=7$}
        \label{fig:NS_field2}
    \end{subfigure}
    \hfill
    \begin{subfigure}[b]{0.325\textwidth}
        \centering
        \includegraphics[width=\linewidth]{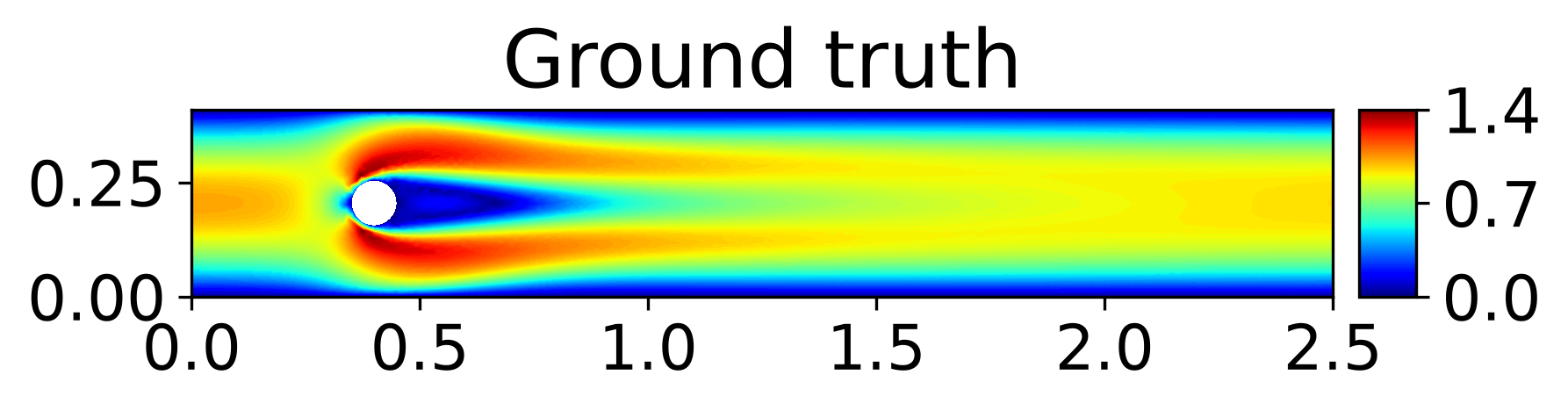}\\[2pt]
        \includegraphics[width=\linewidth]{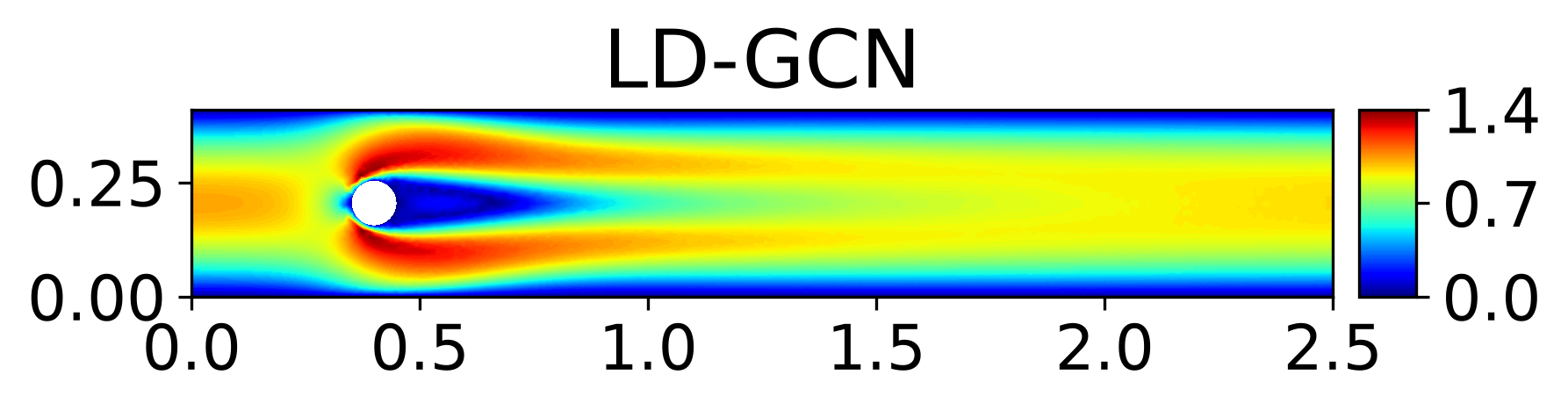}\\[2pt]
        \includegraphics[width=\linewidth]{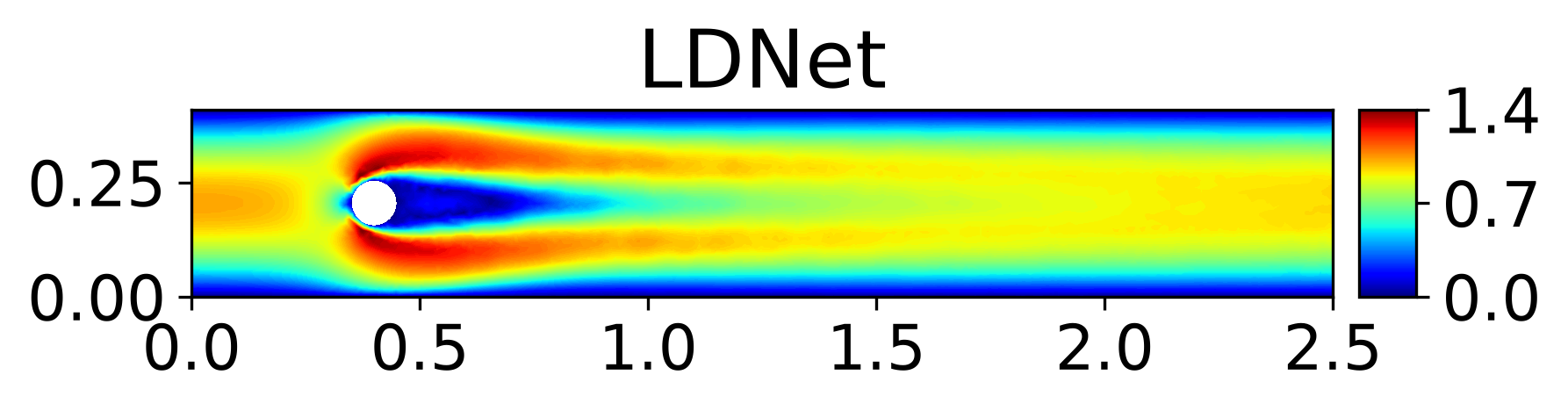}\\[2pt]
        \includegraphics[width=\linewidth]{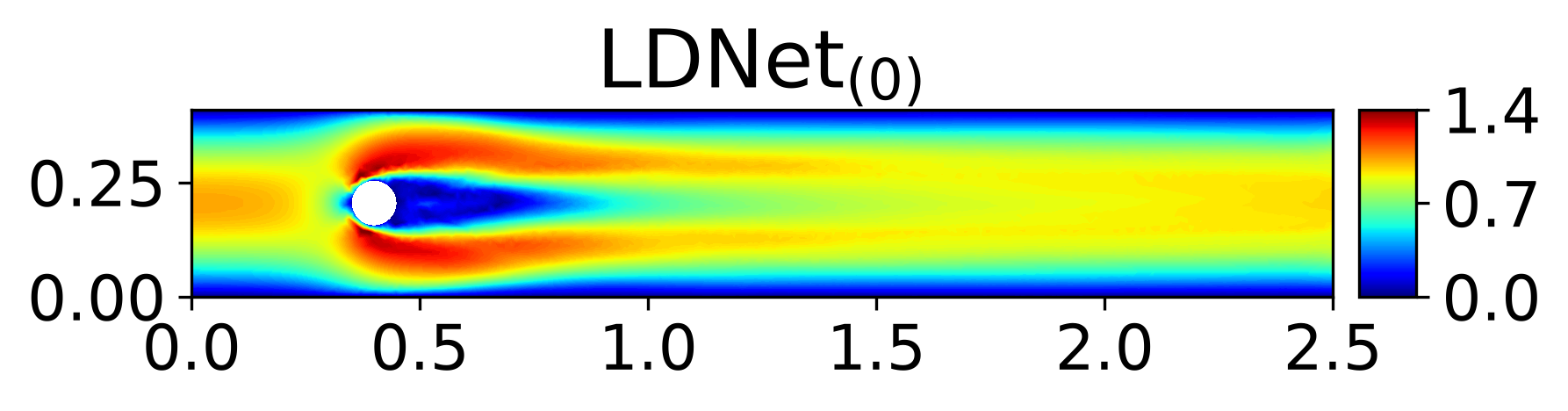}
        \caption{$\mu=1.3\cdot10^{-3},\,t=6.5$}
        \label{fig:NS_field3}
    \end{subfigure}
    \caption{Velocity magnitude of the ground truth solution and of the \gls{ldgcn}, \gls{ldnet}, and \gls{ldnet}$_{(0)}$ approximations (rows) for three selected parameter and time instances (columns).}
    \label{fig:NS_fields}
\end{figure}

\subsection{Linear advection}\label{subsec:advection}
As a final benchmark, we consider a two-dimensional linear advection problem, which we use to assess how the conditioning type and the number of conditioned layers in the decoder affect accuracy and sensitivity to perturbations of the latent state, in light of the analysis of Subsection~\ref{subsec:stability_conditioning}.
This problem is particularly well-suited for evaluating the framework's capacity to learn transport phenomena, which are notoriously challenging for \gls{mor} techniques, even in controlled settings where exact analytical solutions are available.
Moreover, unlike more complex, nonlinear fluid models, this scenario allows us to isolate the effects of pure advection while maintaining a non-trivial parametric dependence.

\subsubsection{Problem definition and data collection}
Let $\Omega=[0,1]^2$ and consider the following linear advection equation:
\begin{equation}\label{eq:advection}
\begin{cases}
\dfrac{\partial u}{\partial t} + \bm\mu\cdot\nabla u=0,& \text{in } \Omega\times(0,T],\\
u(\bm x,t)=u_0(\bm x-\bm\mu t), & \text{on } \Gamma_\text{in}(\bm\mu)\times(0,T],\\
u=u_0,
& \text{on } \Omega\times\{0\},
\end{cases}
\end{equation}
where $u:\Omega\times[0,T]\to\R$ represents the transported scalar field, $T=1$, and parametrized advection velocity $\bm\mu\in\mP=\{(r\cos\theta,r\sin\theta):\theta\in[0,2\pi]\}\subset\R^2$, with $r=0.2$ and angle $\theta$.
The initial condition is chosen as a Gaussian profile centered at $(0.5,0.5)$ with $\sigma=10^{-1}$, i.e., $u_0(\bm x) = \exp\left(-50\left({(x - 0.5)^2} + {(y - 0.5)^2}\right)\right)$, whose evolution under~\eqref{eq:advection} is given analytically by $u(\bm x,t;\bm\mu)=u_0(\bm x-\bm\mu t)$.
Accordingly, the exact solution is prescribed on the inflow boundary $\Gamma_\text{in}(\bm\mu)=\{\bm x\in\partial\Omega:\bm\mu\cdot\bm n(\bm x)<0\}$, where $\bm n$ denotes the outward unit normal to $\partial\Omega$.

The domain $\Omega$ is discretized using a triangular mesh with $1296$ nodes, while the parameter space is sampled with $24$ equispaced values of $\theta\in[0,2\pi]$.
For each parameter instance, we collect $101$ temporal snapshots over the interval $[0,1]$, including the initial condition, resulting in a total of $2424$ solution snapshots.
The dataset is split by randomly selecting $80\%$ of the trajectories for training, and we restrict the training loss to the first $80\%$ of the temporal evolution, i.e., to $t\in[0,T_\text{train}]$ with $T_\text{train}=0.79$, so that the remaining part of the test trajectories corresponds to temporal extrapolation.

\subsubsection{Model performance and sensitivity analysis}
In this setting, we aim to verify the analysis of Subsection~\ref{subsec:stability_conditioning} by isolating the effect of the decoder conditioning; for this reason, all models share the same weakly conditioned architecture for $\NN{dyn}$.
Specifically, we want to (i) study how perturbations in the latent state are amplified by different decoder types, and (ii) further assess how this amplification depends on the spatial coordinate for strong conditioning.

The meshless decoders consist of four layers, of which the first $k\in\{1,2\}$ are conditioned and the remaining $4-k$ are not (Appendix~\ref{appendix:architectures}).
We denote by $\mathrm{W}_k$ and $\mathrm{S}_k$ the decoders with weak and strong conditioning in the first $k$ layers, respectively, so that $\mathrm{W}_1$ corresponds to \gls{ldnet}.
As a mesh-based reference, we also train an \gls{ldfnn} with the same $\NN{dyn}$.
We fix five random seeds, which determine both the split into training and test trajectories and the initialization, and we report the mean over the seeds unless otherwise stated.
Errors are only reported in the extrapolation interval of the test trajectories, to which $\varepsilon^\text{test}_\text{mean}$ and $\varepsilon^\text{test}_\text{max}$ refer.

To quantify the sensitivity of different decoder types, we perturb the latent states $\bm s(t;\bm\mu)$ corresponding to the test trajectories by $\deltas$, with $\|\deltas\|_2=\eta\bar s$, where $\bar s$ is the mean of $\|\bm s(t;\bm\mu)\|_2$ along the trajectory.
The perturbed states are decoded directly, without integrating them in time, so that any amplification of the perturbation is due to the decoder and not to $\NN{dyn}$.
Denoting by $\tilde{\bm u}_\text{sim}(t;\bm\mu)\in\R^{N_h}$ the resulting prediction at the mesh nodes, we define the relative sensitivity
\begin{equation*}
    \mathcal{A}(t;\bm\mu)=\frac{1}{\eta}\,\frac{\|\tilde{\bm u}_\text{sim}(t;\bm\mu)-\bm u_\text{sim}(t;\bm\mu)\|_2}{\|\bm u_\text{sim}(t;\bm\mu)\|_2},
\end{equation*}
i.e., the ratio between the relative perturbations of the output and of the latent state.
We denote by $\mathcal{A}^\text{train}$ and $\mathcal{A}^\text{ext}$ its mean over the training and the extrapolation interval, respectively, and report them for $\eta=10^{-2}$ in Table~\ref{tab:advection}.
This choice of $\eta$ is not restrictive: as shown in Figure~\ref{fig:advection_sensitivity}~(right), the perturbation of the output grows approximately linearly with $\eta$.
Further details on the procedure are given in Appendix~\ref{appendix:sensitivity}.

Subsequently, we investigate the dependence of the bound in Equation~\eqref{eq:pointwise_lipschitz_conditioning} on the spatial coordinate.
Since this bound holds for a single conditioned layer (Proposition~\ref{prop:lipschitz}), we consider the first conditioned layer of the decoder and define $\rho$ as the ratio between the largest and the smallest variation of its output over the mesh nodes (Appendix~\ref{appendix:sensitivity}) for each given $\deltas$.
For weak conditioning, $\bm s(t;\bm\mu)$ only enters through the bias, and the variation of the output of the first layer reduces to $\bm W^{(1)}_{\bm s}\deltas$, which is the same at every node, consistently with $C_1=0$ in Equation~\eqref{eq:pointwise_lipschitz_conditioning}, implying that $\rho=1$.
For strong conditioning, instead, the variation involves the term $\left(\bm\phi^{(1)}(\bm s+\deltas)-\bm\phi^{(1)}(\bm s)\right)\odot\bm W^{(1)}_{\bm h}\bm\gamma(\bm x)$, which depends on $\bm x$, in our case also through the Fourier features $\bm\gamma(\bm x)$.
Therefore, $C_1>0$ in general, and higher values of $\rho$ denote a greater change across the domain.

Table~\ref{tab:advection} compares the training time, the number of trainable parameters, the errors, and the sensitivity of all architectures.
For every $k$, strong conditioning yields more accurate models: $\mathrm{S}_k$ has lower mean and maximum errors than $\mathrm{W}_k$ for all seeds.
Increasing the number of conditioned layers yields at most a small improvement: $\mathrm{S}_2$ is slightly more accurate than $\mathrm{S}_1$ on average, whereas for weak conditioning the converse holds.
\gls{ldfnn} is the fastest model to train, but it has about ten times more decoder parameters and is the least accurate.

Moreover, the values of $\rho$ in Table~\ref{tab:advection} match the structure of the bound: the computed value is $\rho=1$ for every perturbation and seed when weak conditioning is employed.
In contrast, for strongly conditioned models, $\rho$ ranges between $1.26$ and $1.41$ across seeds and values of $k$, consistently with the dependence on the query point in Equation~\eqref{eq:pointwise_lipschitz_conditioning}.

\begin{table}[tbp]
\caption{Comparison of the architectures on the advection benchmark.}
\label{tab:advection}
\centering
\resizebox{\textwidth}{!}{%
\begin{tabular}{c|c|c|c|c|c}
     & Mesh-based & \multicolumn{4}{c}{Meshless} \\
     & \gls{ldfnn} & $\mathrm{W}_1$ (\gls{ldnet}) & $\mathrm{W}_2$ & $\mathrm{S}_1$ & $\mathrm{S}_2$ \\
\midrule
    Training time (s) & $\bm{1802}$ & $2921$ & $2934$ & $\underline{2774}$ & $2800$ \\
    $\NN{dyn}$ & $4674$ & $4674$ & $4674$ & $4674$ & $4674$ \\
    $\mathcal{D}_{\bm\theta}^{N_h}$/$\NN{dec}$ & $\num{131196}$ & $\bm{12\,295}$ & $\underline{12\,423}$ & $12\,487$ & $12\,807$ \\
    $\varepsilon_\text{mean}^{\text{test}}$ & $2.53\cdot10^{-1}$ & $6.54\cdot10^{-2}$ & $7.53\cdot10^{-2}$ & $\underline{4.92\cdot10^{-2}}$ & $\bm{4.63\cdot10^{-2}}$ \\
    $\varepsilon_\text{max}^{\text{test}}$ & $3.74\cdot10^{-1}$ & $1.71\cdot10^{-1}$ & $1.95\cdot10^{-1}$ & $\underline{1.11\cdot10^{-1}}$ & $\bm{1.05\cdot10^{-1}}$ \\
    $\mathcal{A}^\text{train}$ & $1.06$ & $0.743$ & $0.750$ & $\underline{0.729}$ & $\bm{0.718}$ \\
    $\mathcal{A}^\text{ext}$ & $0.944$ & $\underline{0.531}$ & $\bm{0.525}$ & $0.566$ & $0.591$ \\
    $\rho$ & -- & $1.00$ & $1.00$ & $1.35$ & $1.33$ \\
\bottomrule
\end{tabular}}
\end{table}

In terms of sensitivity, all meshless models behave similarly, with $\mathcal{A}^\text{train}$ between $0.72$ and $0.75$, whereas \gls{ldfnn} is more sensitive both during training and extrapolation.
Regarding the comparison between strong and weak conditioning, the observed differences depend on the time interval.
In the training interval, strongly conditioned decoders show a slightly lower value of $\mathcal{A}^\text{train}$.
Nevertheless, during extrapolation, they become more sensitive for both values of $k$, by up to about $13\%$.
Accordingly, the curves in Figure~\ref{fig:advection_sensitivity} cross at $t\approx0.5$--$0.6$, i.e., before the extrapolation interval: the larger sensitivity of strongly conditioned decoders at later times is thus already present within the training interval, and reflects the learned latent parametrization.

Overall, strong conditioning yields higher accuracy; the observed increase in sensitivity during extrapolation relative to weak conditioning, while consistent with Subsection~\ref{subsec:stability_conditioning}, is moderate and does not prevent strongly conditioned models from achieving lower extrapolation errors.
Conditioning more layers, on the other hand, is not always beneficial and, when it is, only leads to minor performance improvements, at least in these experiments.

\begin{figure}[tbp]
    \centering
    \includegraphics[width=\textwidth]{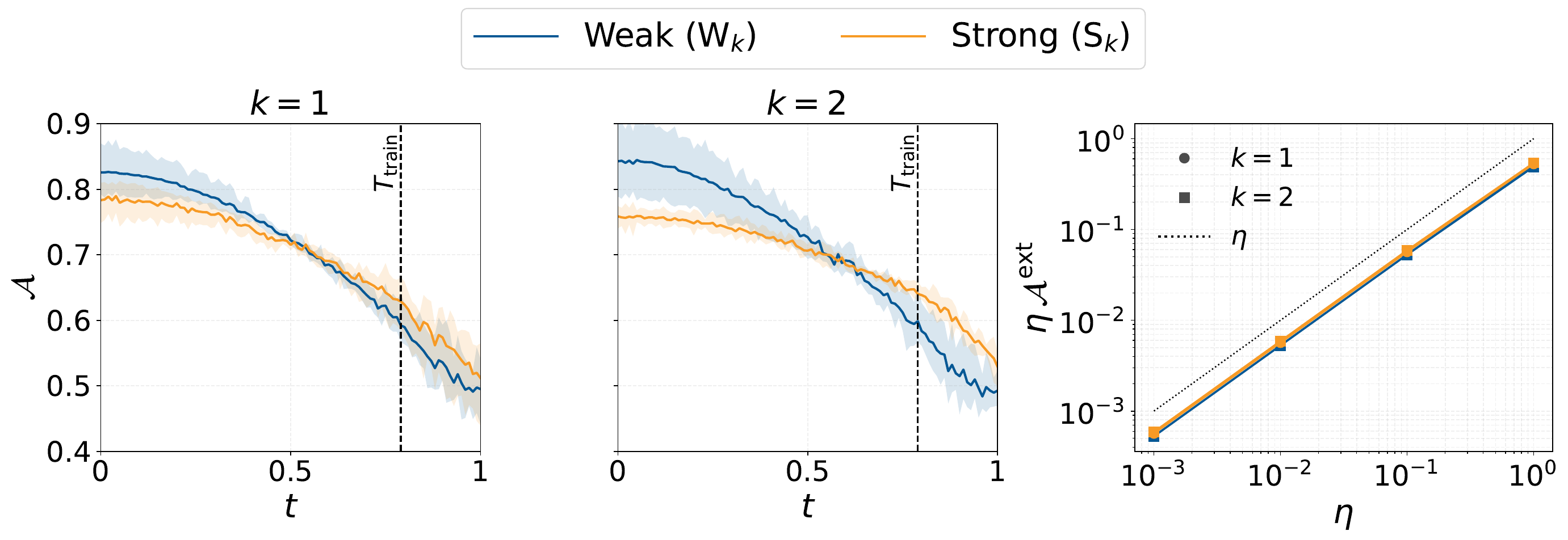}
    \caption{Left: mean relative sensitivity $\mathcal{A}$ for weak and strong conditioning of $k$ decoder layers; shaded areas denote the range over the seeds. Right: relative perturbation of the output $\eta\mathcal{A}^\text{ext}$ in the extrapolation interval.}
    \label{fig:advection_sensitivity}
\end{figure}

%% file: sections/conclusion.tex
\section{Conclusions and future perspectives}\label{sec:conclusion}
In this work, we introduced a unified mathematical framework for mesh-agnostic \glspl{ldm}, interpreting meshless models as \glspl{cnef} and organizing them into a hierarchy according to their conditioning mechanism.
This classification allowed us to compare meshless models theoretically, beyond accuracy, through their stability and robustness properties.
In particular, we derived Lipschitz bounds highlighting the trade-off between decoder expressivity and stability under temporal evolution: while stronger conditioning increases the representational power of the decoder, it can also amplify the propagation of latent perturbations during temporal integration.
Thus, this analysis provides theoretical support for weak conditioning as a robust design choice. 
We also proved a \gls{uat} for weakly conditioned meshless \glspl{ldm}, which naturally extends to stronger forms of conditioning.
Moreover, by studying the continuous limit of the spatial discretization, we showed that meshless decoders naturally avoid the discretization-dependent stability issues that may arise in mesh-based architectures, establishing a conceptual link to operator learning.

The numerical experiments on the Burgers' equation, the Navier--Stokes equations, and the linear advection problem support these theoretical findings.
Specifically, the Burgers benchmark shows that the approximation error, the latent trajectory magnitude, and the decoder Lipschitz constant remain nearly unchanged across different mesh resolutions for meshless models.
In contrast, these quantities exhibit a more marked dependence on the spatial discretization for mesh-based architectures, partially aligning with the scaling behavior predicted for standard weight initializations.
Across all benchmarks, meshless models achieved an accuracy comparable to or better than their mesh-based counterparts while using up to $45$ times fewer trainable decoder parameters.
Furthermore, the experiments show that strong conditioning can either improve accuracy, as in the advection benchmark, or degrade it, as in the Navier--Stokes benchmark.

While meshless \glspl{ldm} offer clear advantages in terms of flexibility and parameter efficiency, they also present specific computational challenges.
Coordinate-based decoders are inherently affected by spectral bias, complicating the reconstruction of high-frequency spatial details, and require pointwise evaluations to recover the continuous solution field.
This spatial querying results in higher memory consumption and slower training times compared to single-pass mesh-based decoders.
To alleviate this computational bottleneck, an important direction for future work is the investigation of adaptive spatial sampling strategies and alternative training objectives.
Additionally, we plan to extend the framework by developing fully mesh-agnostic encoders capable of embedding varying initial conditions without relying on fixed spatial discretizations, consistent with recent developments in \glspl{no}~\cite{dummer_ronom_2026, wu_functional_2024,brivio_handling_2025}.
We also intend to understand in which regimes the higher expressivity of strong conditioning outweighs its higher sensitivity.
Another interesting direction for future research is the extension of our theoretical analysis, for example, examining the behavior of post-training Lipschitz constants and weight distributions, and the relationship between the continuous limit and the optimizer~\cite{xu_width_2026}.
Finally, we plan to extend this framework to \glspl{gnn}, leveraging conditioning to handle complex geometries in \gls{mor}.

\section*{Acknowledgements}
The authors acknowledge the support provided by INdAM-GNCS with the project ``Machine learning scientifico dai modelli surrogati al controllo ottimo'' (CUP E53C25002010001), and the European Union - NextGenerationEU, in the framework of the iNEST - Interconnected Nord-Est Innovation Ecosystem (iNEST ECS00000043 - CUP G93C22000610007) consortium and its CC5 Young Researchers initiative.
The authors also thank Guglielmo Padula for the fruitful discussions.

%% file: sections/appendix.tex
\begin{appendix}
\numberwithin{table}{section}

\section{Methodological details}\label{appendix:formalism}
Here, we provide some definitions and formal details for the theoretical discussions in the manuscript.
Furthermore, we discuss the computational costs of the different conditioning strategies.

\subsection{Notation and basic properties}
Let us start by formally introducing the notation that is used throughout the work, and by proving some useful results.

\begin{definition}\label{def:hadamard}
    Let $\bm A\in\R^{m\times n}$ and $\bm b\in\R^{m}$.
    The Hadamard product is defined as the matrix whose entry in position $(i,j)$ is given by $(\bm b\odot \bm A)_{ij} = b_{i}\cdot a_{ij}$, for $(i,j)\in\{1,\dots, m\}\times\{1,\dots, n\}$, and it coincides with the componentwise product of vectors when $n=1$.
    Moreover, given $\bm{c}\in\R^{n}$, the following associativity property holds $\bm{b}\odot(\bm A\bm c) = (\bm b\odot \bm A)\bm c$.
\end{definition}

We now show that the conditioning mechanisms introduced in Definition~\ref{def:types} can be viewed within the framework of Definition~\ref{def:conditioned_layer}.

Practically speaking, the following result states that (i) input concatenation as in weak conditioning (Equation~\eqref{eq:weak}) can be seen as the choice of only letting the bias depend on $\bm a$ and (ii) the Hadamard product of a conditioning vector with the output of a linear layer can itself be seen as a linear layer in which the weight matrix depends on the conditioning vector.

\begin{lemma}\label{lemma:conditioning}
Let $\hkmo\in\R^{N_{k-1}}$ and $\bm a\in\mA$.
\begin{enumerate}[label=(\roman*)]
\item Let $\tilde{\bm W}\ek\in\R^{N_k\times(N_{k-1}+N_{\bm a})}$, and let $\bm W\uh\ek\in\R^{N_k\times N_{k-1}}$ and $\bm W\ua\ek\in\R^{N_k\times N_{\bm a}}$ be the matrices formed by its first $N_{k-1}$ and last $N_{\bm a}$ columns, respectively.
Then
\begin{equation}\label{eq:weak2}
    \sk\bigl(\tilde{\bm W}\ek(\hkmo\oplus\bm a)\bigr)=\sk\bigl(\bm W\uh\ek\hkmo+\bm W\ua\ek\bm a\bigr),
\end{equation}
i.e., a weakly conditioned layer is a conditioned layer with $\wk(\bm a)\equiv\bm W\uh\ek$ and $\bk(\bm a)=\bm W\ua\ek\bm a$.
\item Let $\bm W\uh\ek\in\R^{N_k\times N_{k-1}}$.
Then
\begin{equation*}
    \sk\bigl(\fk(\bm a)\odot\bm W\uh\ek\hkmo+\bk(\bm a)\bigr)=\sk\bigl(\wk(\bm a)\hkmo+\bk(\bm a)\bigr),
\end{equation*}
with $\wk(\bm a)=\fk(\bm a)\odot\bm W\uh\ek$, i.e., a strongly conditioned layer is a conditioned layer.
\end{enumerate}
\end{lemma}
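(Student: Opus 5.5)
Both parts follow by writing out the linear pre-activation explicitly and checking that the two expressions agree before $\sk$ is applied. Since $\sk$ acts elementwise, equality of the arguments gives equality of the layer outputs, and no property of $\sk$ is needed.

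For part (i), I will partition the matrix into blocks, $\tilde{\bm W}\ek=[\bm W\uh\ek\;|\;\bm W\ua\ek]$, with $\bm W\uh\ek$ the first $N_{k-1}$ columns and $\bm W\ua\ek$ the last $N_{\bm a}$ columns. The concatenated vector is written as the stacked column $\hkmo\oplus\bm a=\begin{pmatrix}\hkmo\\ \bm a\end{pmatrix}$. Block matrix multiplication then gives $\tilde{\bm W}\ek(\hkmo\oplus\bm a)=\bm W\uh\ek\hkmo+\bm W\ua\ek\bm a$. To check this entrywise, note that the $i$-th row sums over the first $N_{k-1}$ indices against $\hkmo$ and over the remaining $N_{\bm a}$ indices against $\bm a$. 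Applying $\sk$ gives \eqref{eq:weak2}. Comparing with \eqref{eq:conditioning}, this is a conditioned layer with the constant map $\wk(\bm a)\equiv\bm W\uh\ek$ and the bias $\bk(\bm a)=\bm W\ua\ek\bm a$.

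For part (ii), I will use the associativity property of Definition~\ref{def:hadamard} with $\bm b=\fk(\bm a)\in\R^{N_k}$, $\bm A=\bm W\uh\ek$ and $\bm c=\hkmo$. This gives $\fk(\bm a)\odot(\bm W\uh\ek\hkmo)=(\fk(\bm a)\odot\bm W\uh\ek)\hkmo$. Although the definition states this identity, I will briefly verify it: the $i$-th entry of both sides equals $\phi_i\sum_j w_{ij}h_j$. Adding $\bk(\bm a)$ and applying $\sk$ gives the claim with $\wk(\bm a)=\fk(\bm a)\odot\bm W\uh\ek$, which is a map $\mA\to\R^{N_k\times N_{k-1}}$ as Definition~\ref{def:conditioned_layer} requires.

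Neither step is a real obstacle. The only point needing care is bookkeeping of dimensions and conventions: $\oplus$ must be read as vertical stacking in the same order as the column split of $\tilde{\bm W}\ek$, and the Hadamard product must be read in the row-scaling sense of Definition~\ref{def:hadamard}, so that it is well defined between a vector in $\R^{N_k}$ and a matrix with $N_k$ rows.
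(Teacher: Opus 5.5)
Your proposal is correct and takes essentially the same approach as the paper. For (i), the paper also splits the $i$-th entry of $\tilde{\bm W}^{(k)}(\bm h^{(k-1)}\oplus\bm a)$ into a sum over the first $N_{k-1}$ columns and a sum over the last $N_{\bm a}$ columns; for (ii), it invokes the same associativity identity $\bm b\odot(\bm A\bm c)=(\bm b\odot\bm A)\bm c$ from Definition~\ref{def:hadamard}, and your extra entrywise check and dimension bookkeeping are harmless additions.
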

\begin{proof}
    \begin{enumerate}[label=(\roman*)]
    \item For all $i\in\{1,\dots,N_k\}$, it holds that
\begin{equation*}
    \bigl(\tilde{\bm W}\ek (\bm h\ekmo \oplus\bm a)\bigr)_{i}= \sum_{s=1}^{N_{k-1}}w_{is}h\ekmo_s + \sum^{N_{\bm a}}_{r=1}w_{i(N_{k-1}+r)}a_r=  \bigl(\bm W\uh\ek \bm h\ekmo\bigr)_{i}+ \bigl(\bm W\ek\ua \bm a\bigr)_{i}.
    \end{equation*}
\item Using the associative property of the Hadamard product from Definition~\ref{def:hadamard} we can write
    \begin{equation*}
        \fk(\bm a)\odot(\bm W\ek\uh\bm h\ekmo)=(\fk(\bm a)\odot \bm W\ek\uh)\bm h\ekmo=\bm W\ek(\bm a)\bm h \ekmo.
    \end{equation*}
    \end{enumerate}
\end{proof}

\begin{remark}\label{rmk:all_ones}
Assume that $\bm\phi:\mathbb A\to\R^{N_k}$ is a \gls{nn} such that its last layer is linear.
Then, it is always possible to choose its weights in such a way that $\bm\phi(\bm a)\equiv\bm{1}_{\R^{N_k}}$.
Indeed, it suffices to set all the entries of the weight matrix of the last layer of $\bm\phi$ equal to $0$, and the bias term equal to $\bm{1}_{\R^{N_k}}$.
\end{remark}

We note that in Equation~\eqref{eq:weak2} it is possible to consider an additional bias $\tilde{\bm{b}}^{(k)}\in\R^{N_k}$ that is independent of $\bm{a}$.
This modification is consistent with Definition~\ref{def:conditioned_layer}, as we can simply define $\bk(\bm a)=\bm{W}\ek\ua\bm a + \tilde{\bm{b}}^{(k)}$.
For brevity, we will often avoid explicitly writing this bias, but we employ it in our numerical experiments in Section~\ref{sec:results}. 
More generally, the bias term may depend nonlinearly on $\bm a$; however, throughout this work, we restrict $\bk$ to be affine in $\bm a$.
With this choice, setting $\fk(\bm a)\equiv\bm 1$ in strong conditioning recovers exactly weak conditioning, i.e., the concatenation of $\bm a$ to the layer input.
The two mechanisms therefore differ only in the gating term $\fk(\bm a)$, whose contribution can thus be isolated and studied.

We now prove Corollary~\ref{theo:film}, which is a \gls{uat} for \gls{film}.
\begin{proof}[Proof of Corollary~\ref{theo:film}]
We consider the usual assumptions on $ g(\bm h, \bm a)$ and $\sigma$, under which classical universal approximation theorems hold~\cite{tikk_survey_2003}.
In this setting, given $g$, there exists a shallow network
\begin{equation}\label{eq:uat}
    \mathrm{NN}_\varepsilon(\bm h, \bm a) = \bm c^T\sigma\big(\tilde{\bm W} (\bm h\oplus\bm a) + \tilde{\bm b}\big),\qquad \text{with}\ \bm c\in\R^{N_k}, \quad \text{such that}\quad \| g - \mathrm{NN}_\varepsilon\|<\varepsilon.
\end{equation}
By Lemma~\ref{lemma:conditioning}, it holds that $\tilde{\bm W} (\bm h\oplus\bm a) = \bm W_{\bm h}\bm h + \bm W\ua \bm a$ for suitable weight matrices $\bm W_{\bm h}$ and $\bm W\ua$.
Thus, defining $\bm b(\bm a)\coloneqq \bm W\ua\bm a+\tilde{\bm b}$, Equation~\eqref{eq:uat} represents a shallow \gls{nn} in which the first layer is conditioned using \gls{film} with $\bm\phi\equiv\bm1_{\R^{N_k}}$ as in Remark~\ref{rmk:all_ones}.
\end{proof}

\subsection{Computational cost of conditioning}
We report in Table~\ref{tab:conditioning_cost} the number of trainable parameters and the computational complexity (measured in floating-point operations during the forward pass) for the three conditioning strategies introduced in Section~\ref{sec:methodology}.
To understand the scalings in a clear setting, we assume that all the networks (including $\bm\phi$ and $\bm W$ in strong and pure conditioning, respectively) comprise a single linear layer.
We denote the input dimension by $N_\mathrm{in}$, the output dimension by $N_\mathrm{out}$, and the conditioning vector dimension by $N_{\bm a}$.

\begin{table}[htbp]
\centering
\caption{Comparison of conditioning types in terms of trainable parameters and computational complexity.}
\label{tab:conditioning_cost}
\begin{tabular}{c|c|c}
Conditioning type & \# Trainable parameters & \# Operations \\ 
\midrule
Weak & 
$N_{\mathrm{out}}\left(N_{\mathrm{in}}+N_{\bm a}+1\right)$ & 
$2N_{\mathrm{out}}\left(N_{\mathrm{in}}+N_{\bm a}\right)$ \\ 
%\midrule
Strong & 
$N_{\mathrm{out}}\left(N_{\mathrm{in}}+2N_{\bm a}+2\right)$ & 
$N_{\mathrm{out}}\left(2N_{\mathrm{in}}+4N_{\bm a}+1\right)$ \\ 
Pure & 
$N_{\mathrm{out}}(N_\mathrm{in}+1)(N_{\bm a}+1)$ & 
$2N_{\mathrm{out}}\big(N_{\mathrm{in}}N_{\bm a}+N_{\mathrm{in}}+N_{\bm a}\big)$ \\ 
\bottomrule
\end{tabular}
\end{table}

To analyze the scaling behavior, we consider a typical hidden layer configuration where $N_\mathrm{out}=N_\mathrm{in}=N>2N_{\bm a}$.
From Table~\ref{tab:conditioning_cost} it can be observed that for weak and strong conditioning the leading term in the sum expressing the number of parameters is $N^2$, while for pure conditioning it is $N_{\bm a}N^2$, which could be significantly larger in the case of a high-dimensional parameter space, for instance in complex \gls{mor} tasks.
Furthermore, the same scaling holds for the number of operations.
Within the framework of meshless \glspl{ldm} discussed in Subsection~\ref{subsec:ldms}, the conditioning variable is the latent state $\bm a=\bm s\in\R^n$, meaning that even for moderate latent dimensions the use of pure conditioning could already become infeasible.

From a practical standpoint, we remark that, for the evaluation of $\bm \phi(\bm a)\odot\bm W_{\bm h}\bm h$, it is favorable to compute  $\bm \phi(\bm a)\odot(\bm W_{\bm h}\bm h)$ rather than $(\bm\phi(\bm a)\odot\bm W_{\bm h})\bm h$, since it requires $2N_\mathrm{out}N_\mathrm{in}$ instead of $N_\mathrm{out}(3N_\mathrm{in}-1)$ operations.

\section{Proofs of the main results}\label{appendix:proofs}

\subsection{Proof of the Universal Approximation Theorem and of the off-grid bound}\label{subsec:proof_UAT}
We prove Theorem~\ref{theo:uat} following the procedure detailed for mesh-based \glspl{ldm} in~\cite{farenga_latent_2025}; we first outline the main steps of the proof and then discuss the technical differences with respect to their work.
The proof consists of five steps:
(i) we write the exact latent dynamics in integral form;
(ii) we approximate the encoder, the dynamics and the decoder separately by \glspl{nn} using the approximation result in~\cite{guhring_approximation_2021}, stating the $\varepsilon$-approximability of Lipschitz continuous functions by a \gls{nn} with $O(\varepsilon^{-D})$ non-zero weights, with $D$ the input dimension;
(iii) we bound the resulting latent error at the continuous level;
(iv) we add the time-discretization error and show that the approximate latent states remain in a neighborhood $K$ of the latent manifold $\mathcal S_n=\bm z(I\times\mP)$;
(v) we propagate the latent error to the solution and add the decoder error, so that the total error is at most $\varepsilon$.

The main difference in the result consists in approximating a decoder map from $\overline\Omega\times\R^n\subset\R^d\times\R^n$ to $\R$ rather than one from $\R^n$ to $\R^{N_h}$.
Besides this, our proof differs from that of~\cite{farenga_latent_2025} in two technical points.
First, in Step~5 we bound the effect of the latent error through the Lipschitz constant of the exact decoder $\Psi$, i.e., we estimate $|\Psi(\bm x,\bm z(t_k;\bm\mu))-\Psi(\bm x,\bm s^k(\bm\mu))|$, rather than the corresponding difference for its \gls{nn} approximation, whose Lipschitz constant is set to one without loss of generality in~\cite{farenga_latent_2025}.
However, the Lipschitz constant of the network is not actually controlled by~\cite{guhring_approximation_2021}, as  for each $\varepsilon$ we obtain a different network $\mathcal{NN}(\varepsilon)$ with Lipschitz constant $L(\varepsilon)$ that in general depends on $\varepsilon$.
Our choice avoids this issue at the cost of evaluating the exact decoder at $\bm s^k(\bm\mu)$, which in general does not belong to $\mathcal S_n$.
Hence, $\Psi$ has to be approximated on $\overline\Omega\times K$ rather than only on $\overline\Omega\times\mathcal S_n$.
Second, since~\cite{guhring_approximation_2021} concerns functions on the unit cube, we extend the maps to be approximated using classical extension theorems~\cite{evans_measure_2015} and rescale them accordingly, as detailed below.

We begin by stating the definition of a convergent quadrature formula in our setting.
\begin{definition}[Convergent quadrature formula]\label{def:quadrature}
For $N_{\Delta t}\in\mathbb N$, let $\Delta t=(T-t_0)/N_{\Delta t}$, $t_k=t_0+k\Delta t$, $\mathcal T_{\Delta t}=\{t_1,\dots,t_{N_{\Delta t}}\}$, and let $\{(\omega_j,\tau_j)\}_{j=0}^{N_{\Delta t}}\subset\R\times I$ be quadrature weights and nodes, which may depend on $N_{\Delta t}$.
We say that the quadrature formula is \emph{convergent} if, for every continuous $\bm g:I\times\mP\to\R^n$,
\begin{equation}\label{eq:uat_quadrature}
\max_{k\in\{1,\dots,N_{\Delta t}\}}\sup_{\bm\mu\in\mP}\Bigl\|\int_{t_0}^{t_k}\bm g(\tau;\bm\mu)\,d\tau-\Delta t\sum_{j=0}^{k}\omega_j\,\bm g(\tau_j;\bm\mu)\Bigr\|\longrightarrow0\qquad\text{as }\Delta t\to0.
\end{equation}
\end{definition}

\paragraph{\textbf{Conventions and approximation result.}}
Throughout this proof, $\|\cdot\|$ denotes the maximum norm on $\R^{d_{\bm\mu}}$, $\R^n$ and $\R^{N_h}$; since all norms on a finite-dimensional space are equivalent, Definition~\ref{def:perfect_embedding} still holds, with possibly different constants.
With this choice, approximating a vector-valued map within a certain tolerance corresponds to approximating each of its components within the same tolerance.
Accordingly, we regard a vector-valued network as a collection of scalar networks with the same input, and its number of active weights, i.e., of nonzero weights and biases, as the sum of theirs.
Using~\cite{guhring_approximation_2021}, for every $D\in\mathbb N$ there exist $C_D,\tilde\varepsilon_D>0$, depending only on $D$ and on the activation function, such that, for every $f\in W^{1,\infty}((0,1)^D)$ with $\|f\|_{W^{1,\infty}((0,1)^D)}\leq1$ and every $\varepsilon\in(0,\tilde\varepsilon_D)$, there exists a network with at most $C_D\varepsilon^{-D}$ active weights whose distance from $f$ in $L^\infty((0,1)^D)$ is at most $\varepsilon$.
Since the maps we approximate and the networks are continuous, this bound holds at every point of $[0,1]^D$.
 
\paragraph{\textbf{Simplified setting.}}
To simplify the notation, we assume that:
(i)~$t_0=0$, so that $I=[0,T]$;
(ii)~$\Psi$ is Lipschitz continuous with constant at most one with respect to $\bm s$ on a set $\overline\Omega\times K$;
(iii)~each map that we approximate in Step~2 is defined on a unit cube $[0,1]^D$;
(iv)~the components of each such map have norm at most one in $W^{1,\infty}((0,1)^D)$.
These assumptions are not restrictive and only affect the constants hidden in $O(\cdot)$.
Indeed, choosing $R=\max_{(t,\bm\mu)\in I\times\mP}\|\bm z(t;\bm\mu)\|$ we have that $\mathcal{S}_n\subset[-R,R]^n$, and denoting $K=[-(R+1),R+1]^n$ and by $L_\Psi$ the Lipschitz constant of $\Psi$ with respect to $\bm s$ on $\overline\Omega\times K$, we observe that:
\begin{enumerate}
\item[(i)] It follows from a time shift;
\item[(ii)] It follows from the change of variable $\bm z\mapsto\lambda\bm z$, with $\lambda=\max\{1,L_\Psi\}$, which replaces $\enh$, $\bm F_n$ and $\Psi(\bm x,\bm s)$ with $\lambda\enh$, $\lambda\bm F_n$ and $\Psi(\bm x,\bm s/\lambda)$, and $R$ with $\lambda R$;
\item[(iii)] Each map that we approximate is Lipschitz continuous on a compact set $S\subset\R^D$, namely $\bm F_n$ on $I\times\mP$, $\enh$ on $\mathcal S_0$ and $\Psi$ on $\overline\Omega\times K$; the latter is not a box in general, since $\overline\Omega$ need not be one.
Applied componentwise, classical extension theorems~\cite[Theorem~3.1]{evans_measure_2015} extend each such map to a Lipschitz continuous function on $\R^D$ with the same constant.
As a result, we can evaluate each function on a box enclosing its domain.
 This box can then be bijectively affinely mapped to $[0,1]^D$ and only enters the estimate of $O(\,\cdot\,)$ by constants;
\item[(iv)] For the obtained functions $g\in W^{1,\infty}((0,1)^D)$, we assume that $M_g=\|g\|_{W^{1,\infty}((0,1)^D)}\leq 1$.
Indeed, if that is not the case, it suffices to approximate $g/M_g$ with  $\varepsilon/M_g<\tilde\varepsilon_D$ and then multiply the last layer of the \gls{nn} approximation by $M_g$.
\end{enumerate}

\begin{proof}[Proof of Theorem~\ref{theo:uat}]
Let $n\geq n^\ast$ and $\varepsilon>0$.
 
\paragraph{\textbf{Step 1: exact \gls{ldm}}}
We start by defining the latent dynamics of the system.
By Definition~\ref{def:perfect_embedding}, $\bm z$ is Lipschitz continuous on the compact set $I\times\mP$, with some constant $\Lambda_{\bm z}$, meaning that $R=\max_{I\times\mP}\|\bm z\|<\infty$.
Extending $\bm f_n$ by continuity to $t=0$, the map $\bm F_n(t;\bm\mu)=\bm f_n(t,\bm z(t;\bm\mu);\bm\mu)$ is Lipschitz continuous on $I\times\mP$, with constant $\ell_t+\ell_{\bm z}\Lambda_{\bm z}+\ell_{\bm\mu}$.
Unlike the case of time-dependent signals~\cite{regazzoni_learning_2024,tomada_latent_2026}, $\bm F_n$ depends only on $(t,\bm\mu)$ and, since $\dot{\bm z}=\bm F_n$ and $\bm z(\,\cdot\,;\bm\mu)\in C^1(I;\R^n)$, we can write the latent dynamics as
\begin{equation}\label{eq:uat_integral}
\bm z(t;\bm\mu)=\enh(\bm u_{0,h}(\bm\mu))+\int_0^t\bm F_n(\tau;\bm\mu)d\tau\qquad\forall(t,\bm\mu)\in I\times\mP.
\end{equation}
 
\paragraph{\textbf{Step 2: neural network approximations}}
We begin by describing the domains on which we want to find \gls{nn} approximations of $\bm F_n$, $\Psi$ and $\enh$.
First we notice that, from the continuity of $\bm u_{0,h}$, the set $\mathcal S_0=\bm u_{0,h}(\mP)$, on which $\enh$ is defined, is compact.
Regarding the decoder, as detailed in Step~4, we want to approximate $\Psi$ on the compact set $\overline\Omega\times K$, where $K=[-(R+1),R+1]^n$.

We then apply~\cite[Proposition~4.8]{guhring_approximation_2021}, as recalled above, to the components of $\bm F_n$ on $I\times\mP$, to those of $\enh$ on $\mathcal S_0$, and to $\Psi$ on $\overline\Omega\times K$, with tolerances $\varepsilon/(8T)$, $\varepsilon/8$ and $\varepsilon/2$, respectively.
To be smaller than the needed threshold $\tilde\varepsilon_D$, in which $D$ is the number of input variables, the tolerance needs to satisfy $\varepsilon\leq\varepsilon_0=\min\{2,4T\tilde\varepsilon_{d_{\bm\mu}+1},4\tilde\varepsilon_{N_h},\tilde\varepsilon_{d+n}\}$, where the value $2$ will be used in Step~4.
This is not restrictive: if some networks and some $\Delta t^\ast$ satisfy the final error bound in Equation~\eqref{eq:uat_bound} with tolerance $\varepsilon_0$, then they also satisfy it with any tolerance $\varepsilon>\varepsilon_0$, whereas the bounds on the numbers of active weights in Theorem~\ref{theo:uat} only concern $\varepsilon\to0$.
Hence, we assume $\varepsilon\leq\varepsilon_0$ in the following. 
Thus, from~\cite{guhring_approximation_2021} we obtain networks $\NN{dyn}$, $\ethnh$ and $\NN{dec}$ with at most $O(n\varepsilon^{-(d_{\bm\mu}+1)})$, $O(n\varepsilon^{-N_h})$ and $O(\varepsilon^{-(d+n)})$ active weights, respectively, such that
\begin{equation}\label{eq:uat_nn_bounds}
\begin{aligned}
&\|\bm F_n(t;\bm\mu)-\NN{dyn}(t;\bm\mu)\|\leq\frac{\varepsilon}{8T},\quad\text{and}\quad\|\enh(\bm u_{0,h}(\bm\mu))-\ethnh(\bm u_{0,h}(\bm\mu))\|\leq\frac{\varepsilon}{8},&&\forall(t,\bm\mu)\in I\times\mP,\\
&|\Psi(\bm x,\bm s)-\NN{dec}(\bm x\,|\,\bm s)|\leq\frac{\varepsilon}{2},&&\forall(\bm x,\bm s)\in\overline\Omega\times K.
\end{aligned}
\end{equation}
 
\paragraph{\textbf{Step 3: continuous latent error}}
Let $\bm s(t;\bm\mu)=\ethnh(\bm u_{0,h}(\bm\mu))+\int_0^t\NN{dyn}(\tau;\bm\mu)\,d\tau$ be the approximate latent state at the continuous level, obtained from~\eqref{eq:uat_integral} by replacing $\enh$ with $\ethnh$ and $\bm F_n$ with $\NN{dyn}$.
By~\eqref{eq:uat_integral} and~\eqref{eq:uat_nn_bounds}, the latent error at the continuous level can be bounded as
\begin{equation*}
\begin{aligned}
    E_{\mathrm{CONT}}&=\sup_{(t,\bm\mu)\in I\times\mP}\left\|\bm z(t;\bm\mu)-\bm s(t;\bm\mu)\right\|\\
    &\leq \sup_{(t,\bm\mu)\in I\times\mP}\left\|\enh(\bm u_{0,h}(\bm\mu))-\ethnh(\bm u_{0,h}(\bm\mu))\right\|+\sup_{(t,\bm\mu)\in I\times\mP}\left\|\int_0^t\left(\bm F_n(\tau;\bm\mu)-\NN{dyn}(\tau;\bm\mu)\right)d\tau\right\|\\
    &\leq\frac{\varepsilon}{8}+T\frac{\varepsilon}{8T}=\frac{\varepsilon}{4}.
\end{aligned}
\end{equation*}
 
\paragraph{\textbf{Step 4: time discretization}}
Since $\NN{dyn}$ is continuous and the quadrature formula is convergent, Definition~\ref{def:quadrature} with $\bm g=\NN{dyn}$ yields $\Delta t^*>0$, depending on $\varepsilon$, on $\NN{dyn}$ and on the quadrature formula, such that, for every $\Delta t=T/N_{\Delta t}\leq\Delta t^*$, the quadrature error is bounded by
\begin{equation*}
\begin{aligned}
E_{\mathrm{QUAD}}&=\max_{k\in\{1,\dots,N_{\Delta t}\}}\sup_{\bm\mu\in\mP}\|\bm s(t_k;\bm\mu)-\bm s^k(\bm\mu)\|\\
&=\max_{k\in\{1,\dots,N_{\Delta t}\}}\sup_{\bm\mu\in\mP}\Bigl\|\int_0^{t_k}\NN{dyn}(\tau;\bm\mu)\,d\tau-\Delta t\sum_{j=0}^{k}\omega_j\NN{dyn}(\tau_j;\bm\mu)\Bigr\|\leq\frac{\varepsilon}{4}.
\end{aligned}
\end{equation*}
Putting together the previous bounds, we obtain the following bound on the discretization error:
\begin{equation*}
\begin{aligned}
    E_{\mathrm{DISCR}}&=\max_{k\in\{1,\dots,N_{\Delta t}\}}\sup_{\bm\mu\in\mP}\|\bm z(t_k;\bm\mu)-\bm s^k(\bm\mu)\|\\
    &\leq  \max_{k\in\{1,\dots,N_{\Delta t}\}}\sup_{\bm\mu\in\mP}\|\bm z(t_k;\bm\mu)-\bm s(t_k;\bm\mu)\|+\max_{k\in\{1,\dots,N_{\Delta t}\}}\sup_{\bm\mu\in\mP}\|\bm s(t_k;\bm\mu)-\bm s^k(\bm\mu)\|   \\
    &=E_\mathrm{CONT}+E_\mathrm{QUAD}  \leq \frac{\varepsilon}{4}+\frac{\varepsilon}{4}= \frac{\varepsilon}{2}.
\end{aligned}
\end{equation*}
Finally, we check that the latent states $\bm z(t_k;\bm\mu)$ and $\bm s^k(\bm\mu)$ at which $\Psi$ and $\NN{dec}$ are evaluated in Step~5 belong to $K$, where both the $1$-Lipschitz bound on $\Psi$ of the simplified setting and the decoder bound in~\eqref{eq:uat_nn_bounds} hold.
Recalling that $K=[-(R+1),R+1]^n$, with $R=\max_{I\times\mP}\|\bm z\|$, we have that $\bm z(t_k;\bm\mu)\in[-R,R]^n\subset K$ and, as $\varepsilon\leq2$,
\begin{equation*}
\|\bm s^k(\bm\mu)\|\leq\|\bm z(t_k;\bm\mu)\|+\|\bm z(t_k;\bm\mu)-\bm s^k(\bm\mu)\|\leq R+E_{\mathrm{DISCR}}\leq R+\frac{\varepsilon}{2}\leq R+1,
\end{equation*}
i.e., $\bm s^k(\bm\mu)\in K$, for all $k\in\{1,\dots,N_{\Delta t}\}$ and $\bm\mu\in\mP$.
 
\paragraph{\textbf{Step 5: error bound}}
Let $k\in\{1,\dots,N_{\Delta t}\}$, $\bm x\in\overline\Omega$ and $\bm\mu\in\mP$, and recall that $u_h(\bm x,t_k;\bm\mu)=\Psi(\bm x,\bm z(t_k;\bm\mu))$.
Adding and subtracting $\Psi(\bm x,\bm s^k(\bm\mu))$ yields a bound in terms of the latent and decoder errors, $E_{\mathrm{LAT}}$ and $E_{\mathrm{DEC}}$ respectively, as follows:
\begin{equation*}
|u_h(\bm x,t_k;\bm\mu)-u^k_{\mathrm{sim}}(\bm x;\bm\mu)|\leq\underbrace{\bigl|\Psi(\bm x,\bm z(t_k;\bm\mu))-\Psi(\bm x,\bm s^k(\bm\mu))\bigr|}_{E_{\mathrm{LAT}}} + \underbrace{\bigl|\Psi(\bm x,\bm s^k(\bm\mu))-\NN{dec}(\bm x\,|\,\bm s^k(\bm\mu))\bigr|}_{E_{\mathrm{DEC}}}.
\end{equation*}
Since $(\bm x,\bm s^k(\bm\mu)), (\bm x,\bm z(t_k;\bm\mu))\in\overline\Omega\times K$, thanks to the assumption that $\Psi$ is $1$-Lipschitz, it holds
\begin{equation*}
   E_{\mathrm{LAT}}\leq \|\bm z(t_k;\bm\mu)-\bm s^k(\bm \mu)\|\leq E_\mathrm{DISCR}\leq\frac{\varepsilon}{2}.
\end{equation*}
Finally, since by Equation~\eqref{eq:uat_nn_bounds} we have that $E_{\mathrm{DEC}}\leq\varepsilon/2$, the resulting bound is obtained as
\begin{equation*}
    |u_h(\bm x,t_k;\bm\mu)-u^k_{\mathrm{sim}}(\bm x;\bm\mu)|\leq\varepsilon.
\end{equation*}
\end{proof}

\begin{remark}[Continuous-time version]\label{rmk:uat_continuous}
The networks constructed in the proof of Theorem~\ref{theo:uat} for a tolerance $\varepsilon$ also satisfy
\begin{equation*}
\sup_{(t,\bm\mu)\in I\times\mP}|u_h(\bm x,t;\bm\mu)-\NN{dec}(\bm x\,|\,\bm s(t;\bm\mu))|\leq\varepsilon\qquad\forall\bm x\in\overline\Omega,
\end{equation*}
where $\bm s(t;\bm\mu)$ is the continuous approximate latent state of Step~3.
Indeed, $E_{\mathrm{CONT}}\leq\varepsilon/4\leq1$ implies $\bm s(t;\bm\mu)\in K$, so that Step~5 applies with $\bm s(t;\bm\mu)$ in place of $\bm s^k(\bm\mu)$.
Consequently, the same holds in $L^2(\Omega)$ with $\varepsilon|\Omega|^{1/2}$ in place of $\varepsilon$, and~\eqref{eq:condition_corollary} is satisfied by choosing the tolerance $\varepsilon/(2|\Omega|^{1/2})$.
\end{remark}

We now aim at proving a bound for the pointwise error at any arbitrary off-grid coordinate $\bm x\in\Omega$.
We assume that $\NN{dec}$ is Lipschitz continuous with respect to $\bm x$ uniformly in $\bm s$ with constant $C_\mathrm{sim}$, and that $u_h$ is Lipschitz continuous with respect to $\bm x$ uniformly in $(t,\bm\mu)$ with constant $C_u$.
\begin{proof}[Proof of Remark~\ref{rmk:off_grid}.] Given $\bm x\in\Omega$, let $\bm x^\ast \in \mathcal{M}_h$ be the nearest grid point\footnote{If $\bm x^\ast$ is not unique, we arbitrarily choose one of the nearest grid points.}.
By assumption, the maximum absolute error at the grid points is bounded by $\varepsilon_{\mathrm{grid}}$, implying that
\begin{equation*}|u_h(\bm x^\ast, t; \bm\mu) - u_{\mathrm{sim}}(\bm x^\ast, t; \bm\mu)| \leq \varepsilon_{\mathrm{grid}}.\end{equation*}
Using the triangle inequality and the Lipschitz continuity, we can write:
\begin{equation*}
    \begin{aligned}
 |u_h(\bm x,t;\bm\mu)- u_\mathrm{sim}(\bm x,t;\bm\mu)|&= |u_h(\bm x,t;\bm\mu)- \NN{dec}(\bm x\,|\, \bm s(t;\bm\mu))| \\
 &\leq |u_h(\bm x,t;\bm\mu)- u_h(\bm x^\ast,t;\bm\mu)|\\
 & \qquad\qquad + |u_h(\bm x^\ast,t;\bm\mu)- \NN{dec}(\bm x^\ast\,|\, \bm s(t;\bm\mu))| \\
 & \qquad\qquad + |\NN{dec}(\bm x^\ast\,|\, \bm s(t;\bm\mu)) -\NN{dec}(\bm x\,|\, \bm s(t;\bm\mu))| \\
 & \leq C_u\|\bm x-\bm x^\ast\|_2+ \varepsilon_\mathrm{grid}+C_\mathrm{sim}\|\bm x - \bm x^\ast\|_2\\
 & \leq Ch_{\max}+\varepsilon_\mathrm{grid},
    \end{aligned}
\end{equation*}
with $C = C_u + C_{\mathrm{sim}}$.
\end{proof}

\subsection{Proof of the stability results}\label{appendix:stability}
We prove the results related to stability that are discussed in Subsections~\ref{subsec:stability_conditioning} and~\ref{subsec:stability_ML_MB}.
Specifically, we first compute the Lipschitz constant of a conditioned layer (Proposition~\ref{prop:lipschitz}) and then provide a bound on the Lipschitz constant of the decoder as the mesh is refined (Proposition~\ref{prop:stability_L2}).

\begin{proof}[Proof of Proposition~\ref{prop:lipschitz}]
Let $\bm a_1,\bm a_2\in\mA$.
Using the Lipschitz continuity of $\sigma$, the triangle inequality, and the fact that the $p$-norm is a natural matrix norm, it follows that
\begin{equation*}
\begin{aligned}
      \|\bm{l}(\bm h\,|\,\bm a_1)-\bm{l}(\bm h\,|\,\bm a_2)\|_p&=\|\sigma\left(\bm W(\bm a_1)\bm h+\bm b(\bm a_1)\right) - \sigma\left(\bm W(\bm a_2)\bm h+\bm b(\bm a_2)\right)\|_p\\
      &\leq C_\sigma\left(\|\left(\bm W(\bm a_1)-\bm W(\bm a_2)\right)\bm h\|_p+\|\bm b(\bm a_1)-\bm b(\bm a_2)\|_p\right) \\
      &\leq C_\sigma\left(\|\bm W(\bm a_1)-\bm W(\bm a_2)\|_p\|\bm h\|_p + C_{\bm b}\|\bm a_1-\bm a_2\|_p\right) \\
      &\leq  C_\sigma\left(C_{\bm W}\|\bm a_1-\bm a_2\|_p\|\bm h\|_p + C_{\bm b}\|\bm a_1-\bm a_2\|_p\right)\\
      & = C_\sigma (C_{\bm W}\|\bm h\|_p+C_{\bm b})\|\bm a_1-\bm a_2\|_p,
\end{aligned}
\end{equation*}
proving Equation~\eqref{eq:stability}.

Now, from the linearity of $\bm b$ discussed in Section~\ref{sec:methodology}, we can write $\bm b(\bm a )=\bm W_{\bm a}\bm a$, so that $C_{\bm b}\leq \|\bm W_{\bm a}\|_p$.
We remark that, up to this point, the only assumption on the norm that is used is that it is compatible~\cite{quarteroni_numerical_2007}, i.e., such that $\|\bm A\bm x\|\leq\|\bm A\| \|\bm x\|$ for each $\bm x$.

To bound $C_{\bm W}$, we write the definition of matrix norm:
\begin{equation*}
\|\bm W(\bm a_1)-\bm W(\bm a_2)\|_p = \max_{\|\bm c\|_p=1}\|\left(\bm W(\bm a_1)-\bm W(\bm a_2)\right)\bm c\|_p =  \max_{\|\bm c\|_p=1}\|\left(\bm\phi(\bm a_1)-\bm \phi(\bm a_2)\right)\odot \bm W_{\bm h}\bm c\|_p.
\end{equation*}
From the previous expression, since $\|\bm u\odot\bm v\|_p\leq\|\bm u\|_\infty\|\bm v\|_p\leq\|\bm u\|_p\|\bm v\|_p$ for all $\bm u,\bm v$ (as $|u_iv_i|\leq\|\bm u\|_\infty|v_i|$), choosing $\bm u=\bm\phi(\bm a_1)-\bm\phi(\bm a_2)$ and $\bm v=\bm W_{\bm h}\bm c$, we have that
\begin{equation*}
\begin{aligned}
    \|\bm W(\bm a_1)-\bm W(\bm a_2)\|_p& =\max_{\|\bm c\|_p=1}\|(\bm\phi(\bm a_1)-\bm\phi(\bm a_2))\odot\bm W_{\bm h}\bm c\|_p\\
    &\leq \max_{\|\bm c\|_p=1} \|\bm\phi(\bm a_1)-\bm \phi(\bm a_2)\|_p\|\bm W_{\bm h}\bm c\|_p\\
    &\leq  \|\bm\phi(\bm a_1)-\bm \phi(\bm a_2)\|_p\|\bm W_{\bm h}\|_p\\
    &\leq C_{\bm\phi}\|\bm a_1-\bm a_2\|_p\|\bm W_{\bm h}\|_p,
    \end{aligned}
\end{equation*}
so that the Lipschitz constant of $\bm W$ is at most $C_{\bm\phi}\|\bm W_{\bm h}\|_p$.
To conclude, if weak conditioning is used, $\bm W(\bm a_1)=\bm W(\bm a_2)$ for all $\bm a_1, \bm a_2\in\mA$, meaning that the Lipschitz constant of $\bm W$ is $0$.
\end{proof}

We now bound the $L^2(\Omega)$ effect of a latent perturbation $\deltas$ on the decoder output (Proposition~\ref{prop:stability_L2}).
The meshless case follows from the Lipschitz continuity of the activation, while the mesh-based case relies on bounding the mass-matrix quadratic form through its largest eigenvalue.

\begin{proof}[Proof of Proposition~\ref{prop:stability_L2}]
In the meshless case, the inequality follows immediately observing that
\begin{equation*}
\begin{split}
    \|\NN{dec}(\,\cdot\,|\,\bm s +\deltas)-\NN{dec}(\,\cdot\,|\,\bm s )\|^2_{L^2(\Omega)}={\int_\Omega |\NN{dec}(\bm x\,|\,\bm s + \deltas)-\NN{dec}(\bm x\,|\,\bm s )|^2d\bm x}\\
    = {\int_\Omega |\sigma\left(\bm W_{\bm x}\bm x + \bm{W}_{\bm s}(\bm s+\deltas)+\bm b\right)-\sigma\left(\bm W_{\bm x}\bm x + \bm{W}_{\bm s}\bm s+\bm b\right)|^2d\bm x}\\
    \leq {\int_\Omega C_\sigma^2\|\bm W_{\bm s}\|_2^2\|\deltas\|_2^2d\bm x}
    ={|\Omega|}C^2_\sigma\|\bm W_{\bm s}\|_2^2 \|\deltas\|_2^2.
    \end{split}
\end{equation*}
In the case of the mesh-based model, we can write
\begin{equation*}
\begin{split}
    \left\|\tilde{\mathcal{D}}^{N_h}_{\bm\theta}(\,\cdot\,,\,\bm s+\deltas)-\tilde{\mathcal{D}}^{N_h}_{\bm\theta}(\,\cdot\,,\,\bm s)\right\|_{L^2(\Omega)}^2={\int_\Omega\left|\tilde{\mathcal{D}}^{N_h}_{\bm\theta}(\bm x,\bm s+\deltas)-\tilde{\mathcal{D}}^{N_h}_{\bm\theta}(\bm x, \bm s)\right|^2d\bm x}\\
        ={\int_\Omega \left(\sum_{k=1}^{N_h}\left(\mathcal{D}^{N_h}_{\bm\theta}(\bm s+\deltas)_k-\mathcal{D}^{N_h}_{\bm\theta}(\bm s)_k\right)\varphi_k(\bm x)\right)^2 d\bm x}\\
          ={\int_\Omega \left(\sum_{k=1}^{N_h}\left(\mathcal{D}^{N_h}_{\bm\theta}(\bm s+\deltas)_k-\mathcal{D}^{N_h}_{\bm\theta}(\bm s)_k\right)\varphi_k(\bm x)\right)\left(\sum_{j=1}^{N_h}\left(\mathcal{D}^{N_h}_{\bm\theta}(\bm s+\deltas)_j-\mathcal{D}^{N_h}_{\bm\theta}(\bm s)_j\right)\varphi_j(\bm x)\right) d\bm x}\\
                    ={\sum_{k,j=1}^{N_h}\left(\mathcal{D}^{N_h}_{\bm\theta}(\bm s+\deltas)_k-\mathcal{D}^{N_h}_{\bm\theta}(\bm s)_k\right)\left(\mathcal{D}^{N_h}_{\bm\theta}(\bm s+\deltas)_j-\mathcal{D}^{N_h}_{\bm\theta}(\bm s)_j\right)\int_\Omega \varphi_k(\bm x)\varphi_j(\bm x) d\bm x}={\bm\delta^T\bm M\bm\delta},
\end{split}
\end{equation*}
having defined the mass matrix $\bm M \in \mathbb{R}^{N_h \times N_h}$ such that $M_{ij}=\int_\Omega\varphi_j(\bm x)\varphi_i(\bm x)d\bm x$, and writing $\bm\delta=\mathcal{D}^{N_h}_{\bm\theta}(\bm s+\deltas)-\mathcal{D}^{N_h}_{\bm\theta}(\bm s)$.
By the Courant--Fisher theorem~\cite{quarteroni_numerical_2007}, since $\bm M$ is symmetric, it holds that
\begin{equation*}
    {\bm\delta^T\bm M\bm\delta}\leq\lambda_{\mathrm{max}}(\bm M)\|\bm\delta\|_2^2,
\end{equation*}
where $\lambda_\mathrm{max}(\bm M)$ is the maximum eigenvalue of $\bm M$, which is real.
However, from classical \gls{fe} theory~\cite{ern_theory_2004} we know that there exists a constant $C_M>0$ independent of $h$ such that $\lambda_{\mathrm{max}}(\bm M)\leq C_M^2h^d$ for each $h$.
Additionally, noticing that
\begin{equation*}
        \|\bm\delta\|_2=\left\|\sigma\left(\bm W_{\bm\theta}^{N_h}(\bm s+\deltas)+\bm b\right) -\sigma\left(\bm W_{\bm\theta}^{N_h}\bm s+\bm b\right)\right\|_2\leq C_\sigma\left\|\bm W_{\bm\theta}^{N_h}\right\|_2\|\deltas\|_2,
\end{equation*}
we obtain 
\begin{equation*}
     \left\|\tilde{\mathcal{D}}^{N_h}_{\bm\theta}(\,\cdot\,,\,\bm s+\deltas )-\tilde{\mathcal{D}}^{N_h}_{\bm\theta}(\,\cdot\,,\,\bm s )\right\|_{L^2(\Omega)}=\sqrt{\bm\delta^T\bm M\bm\delta}\leq \sqrt{\lambda_\mathrm{max}(\bm M)\|\bm\delta\|_2^2}\leq h^{\frac{d}{2}}C_MC_\sigma \left\|\bm W^{N_h}_{\bm\theta}\right\|_2\|\deltas\|_2.
\end{equation*}
The final asymptotic bound follows immediately from the fact that, given our hypotheses on the mesh, it holds that $h^d = O\left(\frac{1}{N_h}\right)$~\cite{ern_theory_2004}.
\end{proof}

We now prove Proposition~\ref{corollary:magnitude}, which lower-bounds the magnitude $S$ of the latent trajectories in terms of the variation $C$ of the solution, the approximation error $\varepsilon$, and the Lipschitz constant of the decoder.

\begin{proof}[Proof of Proposition~\ref{corollary:magnitude}]
Let $t_1,t_2\in I$ and $\bm\mu\in\mP$.
Denoting by $L_{\NN{dec}}$ the Lipschitz constant of the decoder with respect to the latent state, by the triangle inequality and Equation~\eqref{eq:condition_corollary} it holds that
\begin{equation*}
\begin{aligned}
  \| u_h(\,\cdot\,,t_1;\bm\mu)- u_h(\,\cdot\,,t_2;\bm\mu)\|_{L^2(\Omega)}
  &\leq \| u_h(\,\cdot,t_1;\bm\mu)-\NN{dec}(\,\cdot\,|\,\bm s(t_1;\bm\mu))\|_{L^2(\Omega)}\\
   &\qquad\qquad+\| \NN{dec}(\,\cdot\,|\,\bm s(t_1;\bm\mu))- \NN{dec}(\,\cdot\,|\,\bm s(t_2;\bm\mu)) \|_{L^2(\Omega)}\\
  &\qquad\qquad+\|\NN{dec}(\,\cdot\,|\,\bm s(t_2;\bm\mu))- u_h(\,\cdot\,,t_2;\bm\mu)\|_{L^2(\Omega)}\\
  &\leq \frac{\varepsilon}{2}+L_{\NN{dec}}\|\bm s(t_1;\bm\mu)-\bm s(t_2;\bm\mu)\|_2 + \frac{\varepsilon}{2}\\
  &\leq \varepsilon+L_{\NN{dec}}(\|\bm s(t_1;\bm\mu)\|_2+\|\bm s(t_2;\bm\mu)\|_2)\\
   &\leq \varepsilon+2SL_{\NN{dec}}.
\end{aligned}
\end{equation*}
Taking the supremum over $t_1,t_2\in I$ and $\bm\mu\in\mP$ gives $C\leq\varepsilon+2SL_{\NN{dec}}$.
Subtracting $\varepsilon$ and dividing by $2L_{\NN{dec}}$ on both sides gives Equation~\eqref{eq:latent_magnitude_2}.
The mesh-based case is identical, since only Equation~\eqref{eq:condition_corollary} and the Lipschitz continuity of the decoder were used.
\end{proof}

Although we do not adopt it, spectral normalization~\cite{miyato_spectral_2018} could be used to directly control the Lipschitz constant of the network, and hence the bound on $S$ previously discussed.

\section{Training setup and architectures}\label{appendix:architectures}
We now describe the training procedure together with the architectural and hyperparameter choices adopted in the numerical experiments.
All models were trained by minimizing the \gls{mse} loss with an $L^1$ regularization term weighted by a parameter $\lambda$.
We set $\lambda=10^{-5}$ for the Navier--Stokes and advection benchmarks, whereas $\lambda=0$ for the Burgers test case in order to study the relationship between the Lipschitz constants and $N_h$ without directly affecting the former.
The latent dimensions and hidden-layer configurations of all architectures are summarized in Table~\ref{tab:network_parameters}.
Optimization was performed using Adam~\cite{prince_understanding_2023} with a learning rate of $10^{-3}$, followed by a number of \gls{lbfgs}~\cite{liu_limited_1989} epochs.
To prevent overfitting, the models evaluated in Section~\ref{sec:results} correspond to the minimum validation loss achieved during training.
The validation dataset consisted of the test parameters, with trajectories truncated to the portion of the temporal evolution used for training.

Regarding the network architectures, we used $\tanh$ activations for $\NN{dyn}$ and \gls{elu} for all decoder architectures.
We further incorporated skip connections~\cite{prince_understanding_2023} in all decoder architectures and applied layer normalization to meshless decoders only.
Whenever strong conditioning was employed, the conditioning network $\bm{\phi}$ consisted of a single-layer \gls{nn} using the same activation function as the corresponding main branch ($\NN{dyn}$ or decoder), as this consistently outperformed deeper alternatives in our experiments.

Hidden-layer configurations are reported in Table~\ref{tab:network_parameters} as pairs comprising the conditioned layers, followed by the non-conditioned layers.
The subscript denotes the conditioning type: $\mathrm{W}$ (weak) indicates weak conditioning, whereas $\mathrm{S}$ (strong) denotes strong conditioning.
For mesh-based architectures, all decoders consisted of two hidden layers with width $100$.

In accordance with Remark~\ref{rmk:signature}, decoder architectures may additionally depend on $\bm{\mu}(t)$ and $t$.
For the Burgers benchmark, the decoder received only $\nu$ as an additional input, whereas for the Navier--Stokes benchmark it received both $\mu$ and $t$.
In the advection benchmark, the decoder depended solely on the latent state.
For the Navier--Stokes test case, the spatial coordinates were rescaled to the unit square $[0,1]^2$.

To mitigate the spectral bias of our meshless models, we applied a Fourier feature mapping $\bm{\gamma}(\bm{x})$.
For an input coordinate $\bm{x} \in \mathbb{R}^d$, we defined $\bm{\gamma}: \mathbb{R}^d \to \mathbb{R}^{d(2F+1)}$ as
\begin{equation*}
\bm{\gamma}(\bm{x}) = \left[ \bm{x}, \sin(2^i \pi \bm{x}), \cos(2^i \pi \bm{x}) \right]_{i=0}^{F-1},
\end{equation*}
where the sine and cosine functions are applied elementwise.
The value of $F$ adopted for each benchmark is reported in Table~\ref{tab:network_parameters}.
For $F=0$, the mapping reduces to the identity, $\bm{\gamma}(\bm{x})=\bm{x}$, i.e., no Fourier features are employed: this is the case of \gls{ldnet}$_{(0)}$, which is otherwise identical to \gls{ldnet} in the Navier--Stokes benchmark.

For \gls{ldnet}, \gls{ldnet}$_{(0)}$, and \gls{filmldm} in the Navier--Stokes benchmark, we randomly selected $2000$ of the $5017$ mesh nodes at each training iteration to perform the forward pass and compute the loss.
This batching strategy substantially reduced the training time while ensuring good accuracy of the trained network.
However, more aggressive batching prevented the meshless decoders (particularly \gls{filmldm}) from overcoming spectral bias, leading them to learn only the mean field.

All experiments were performed on a workstation equipped with an NVIDIA RTX A2000 GPU (12 GB VRAM).

\begin{table}[htbp]
\caption{Summary of hyperparameters considered in the numerical experiments. The number of training epochs is expressed as the sum of Adam and \gls{lbfgs} iterations.}
\label{tab:network_parameters}
\centering
\begin{tabular}{c|c|c|c|c|c|c}
%\toprule
 & Architecture & $n$ & $\NN{dec}$ & $\NN{dyn}$ & Epochs & $F$ \\ 
\midrule
 & LD-FNN (Glorot) & $5$ & \ding{55} & $[64]_\mathrm{W},\; [64, 64]$ & $\num{1500}+\num{1000}$ & \ding{55} \\
 & LD-FNN (He) & $5$ & \ding{55} & $[64]_\mathrm{W},\; [64, 64]$ & $\num{1500}+\num{1000}$ & \ding{55} \\
\multirow{-3}{*}{Burgers} 
 & LDNet & $5$ & $[64]_\mathrm{W},\; [64, 64, 64]$ & $[64]_\mathrm{W},\; [64, 64]$ & $\num{1500}+\num{1000}$ & $7$ \\ 
\midrule
 & \gls{ldfnn} & 25 & \ding{55} & $[64]_\mathrm{W},\; [64, 64, 64]$ & $\num{2000}+\num{1000}$ & \ding{55} \\
 & \gls{ldgcn} & 25 & \ding{55} & $[64]_\mathrm{W},\; [64, 64, 64]$ & $\num{2000}+\num{1000}$ & \ding{55} \\
 & \gls{ldnet} & 25 & $[85]_\mathrm{W},\; [85, 85, 85]$ & $[64]_\mathrm{W},\; [64, 64, 64]$ & $\num{2000}+\num{1000}$ & $7$ \\
 & \gls{ldnet}$_{(0)}$ & 25 & $[85]_\mathrm{W},\; [85, 85, 85]$ & $[64]_\mathrm{W},\; [64, 64, 64]$ & $\num{2000}+\num{1000}$ & $0$ \\
\multirow{-5}{*}{Navier--Stokes} 
 & \gls{filmldm} & 25 & $[85, 85]_\mathrm{S},\; [85, 85]$ & $[64]_\mathrm{W},\; [64, 64, 64]$ & $\num{2000}+\num{1000}$ & $7$ \\ 
\midrule
 & \gls{ldfnn} & 2 & \ding{55} & $[64]_\mathrm{W},\; [64, 64]$ & $\num{2000}+\num{1000}$ & \ding{55} \\
 & $\mathrm{W}_1$ (\gls{ldnet}) & 2 & $[64]_\mathrm{W},\; [64, 64, 64]$ & $[64]_\mathrm{W},\; [64, 64]$ & $\num{2000}+\num{1000}$ & $6$ \\
 & $\mathrm{W}_2$ & 2 & $[64, 64]_\mathrm{W},\; [64, 64]$ & $[64]_\mathrm{W},\; [64, 64]$ & $\num{2000}+\num{1000}$ & $6$ \\
 & $\mathrm{S}_1$ & 2 & $[64]_\mathrm{S},\; [64, 64, 64]$ & $[64]_\mathrm{W},\; [64, 64]$ & $\num{2000}+\num{1000}$ & $6$ \\
\multirow{-5}{*}{Advection}
 & $\mathrm{S}_2$ & 2 & $[64, 64]_\mathrm{S},\; [64, 64]$ & $[64]_\mathrm{W},\; [64, 64]$ & $\num{2000}+\num{1000}$ & $6$ \\
\bottomrule
\end{tabular}
\end{table}

\section{Computation of continuous quantities}\label{appendix:continuous}
Since the bound in Equation~\eqref{eq:latent_magnitude_2} involves continuous $L^2(\Omega)$ norms rather than their discrete counterparts, we approximate spatial integral norms for the Burgers benchmark using the trapezoidal rule~\cite{quarteroni_numerical_2007}.
The final error metric $\varepsilon_{L^2(\Omega)}^\mathrm{test}$ is defined as the mean $L^2(\Omega)$ error over all values of $\nu$ in the test dataset.

Furthermore, evaluating the theoretical bounds in Section~\ref{sec:analysis} requires computing the Lipschitz constant $L_D$ of each decoder network $D\in\{\NN{dec}, \tilde{\mathcal{D}}_{\bm \theta}^{N_h}\}$.
In practice, the complexity of the evaluated architectures (particularly the inclusion of skip connections and layer normalization) makes the derivation of analytical bounds more involved.
Even when derivable, such bounds tend to be excessively loose, complicating the direct empirical validation of our theoretical results.
Therefore, for a given decoder $D\in\{\NN{dec}, \tilde{\mathcal{D}}_{\bm \theta}^{N_h}\}$, we estimate the corresponding Lipschitz constant $L_D$ via a Monte Carlo-based procedure, providing a practical proxy for the true Lipschitz constant.

To this end, we sample a large set of latent states $\bm s$ from the latent trajectories.
For each of them, we generate random perturbation directions uniformly distributed on the unit hypersphere, and scale them such that $\|\Delta \bm s\|_2 \in [10^{-4}, 10^{2}]$ to capture both local and global variations in the latent landscape.

The empirical Lipschitz constant is then estimated as the maximum observed secant ratio between perturbations in latent space and the corresponding variations in the physical domain, i.e.,
\begin{equation*}
    L_D \approx \max_{\bm s, \Delta \bm{s}} \frac{\|D(\bm s + \Delta \bm s) - D(\bm s)\|_{L^2(\Omega)}}{\|\Delta \bm s\|_2},
\end{equation*}
where the $L^2(\Omega)$ norm in the numerator is approximated using the trapezoidal rule.
By taking the maximum over the randomly sampled latent states and perturbation scales, we obtain an empirical proxy for $L_D$.

\section{Sensitivity analysis in the advection benchmark}\label{appendix:sensitivity}
We detail here how the quantities $\mathcal{A}$ and $\rho$ of Subsection~\ref{subsec:advection} are computed.

To isolate the sensitivity of the decoder, the latent dynamics of each test trajectory is first integrated without perturbations; the decoder is then evaluated for each value of $t$ and $\bm\mu$ at the perturbed latent state $\bm s(t;\bm\mu)+\deltas$, where $\deltas=\eta\bar s\,\bm v$, $\bar s$ is the mean of $\|\bm s(t;\bm\mu)\|_2$ along the corresponding trajectory, and $\bm v$ is drawn uniformly on the unit sphere of $\R^n$.
Scaling the perturbation by $\bar s$ makes $\mathcal{A}$ invariant to a rescaling of the latent space, whose magnitude is learned and therefore differs across models.
For each latent state, we draw ten realizations of $\bm v$, shared by all models and by all values of $\eta\in\{10^{-3},10^{-2},10^{-1},1\}$, and we average $\mathcal{A}$ over the realizations, the test trajectories, the snapshots of the considered time interval, and the seeds.

To compute $\rho$, we use the same perturbations with $\eta=10^{-2}$ and measure how much the response of the first layer varies across the domain.
Specifically, denoting by $\bm h^{(1)}(\bm x\,|\,\bm s)$ the output of the first conditioned layer (before the skip connection, the normalization, and the activation, so that $\rho=1$ exactly for weak conditioning), we quantify the response at each mesh node $\bm x_i$ as
\begin{equation*}
    \zeta_i=\frac{\|\bm h^{(1)}(\bm x_i\,|\,\bm s+\deltas)-\bm h^{(1)}(\bm x_i\,|\,\bm s)\|_2}{\|\deltas\|_2},
\end{equation*}
and we set $\rho=\max_i\zeta_i/\min_i\zeta_i$, computed for each realization, snapshot, trajectory and seed, and averaged in the same way as $\mathcal{A}$.

\end{appendix}